\newtheorem{theorem}{Theorem}[section]
\newtheorem{lemma}[theorem]{Lemma}
\newtheorem{corollary}[theorem]{Corollary}
\newtheorem{proposition}[theorem]{Proposition}
\newtheorem*{ack*}{Acknowledgment}
\def\x{{\bf x}}
\def\F{{\mathcal F}}\def\R{{\mathbb R}}
\def\C{{\mathbb C}}
\def\B{{\mathcal B}}\def\nint{\mathop{\diagup\kern-13.0pt\int}}
\def\P{{\mathcal P}}\def\J{{\mathcal J}}
\def\M{{\mathfrak M}}\def\m{{\mathfrak m}}
\def\W{{\mathcal W}}
\def\dim{{\operatorname{dim}}}
\def\Avg{Avg}
\def\bas{\begin{align*}}
\def\eas{\end{align*}}
\def\bi{\begin{itemize}}
\def\ei{\end{itemize}}
\def\emph#1{{\it #1}}
\begin{document}
\author{Jean Bourgain}
\address{School of Mathematics, Institute for Advanced Study, Princeton NJ}
\email{bourgain@@math.ias.edu}
\author{Ciprian Demeter}
\address{Department of Mathematics, Indiana University,  Bloomington IN}
\email{demeterc@@indiana.edu}
\author{Larry Guth}
\address{Department of Mathematics, MIT, Cambridge MA}
\email{lguth@@math.mit.edu}
\keywords{discrete restriction estimates, Strichartz estimates, additive energy}

\thanks{The first author is partially supported by the NSF grant DMS-1301619. The second  author is partially supported  by the NSF Grant DMS-1161752. The third author is supported by a Simons Investigator award}\thanks{ AMS subject classification: Primary 11L07; Secondary 42A45}
\title[Proof of Vinogradov's mean value theorem]{Proof of the main conjecture in Vinogradov's mean value theorem for degrees higher than three}

\begin{abstract}
We prove the main conjecture in Vinogradov's Mean Value Theorem for degrees higher than three. This will be a consequence of a sharp decoupling inequality for curves.
\end{abstract}
\maketitle

\section{Introduction}
For each integers $s\ge 1$ and  $n,N\ge 2$ denote by  $J_{s,n}(N)$ the number of integral solutions for the following system
$$X_1^i+\ldots+X_s^i=X_{s+1}^i+\ldots+X_{2s}^i,\;\;1\le i\le n,$$
with $1\le X_1,\ldots,X_{2s}\le N$.
The number $J_{s,n}(N)$ has the following analytic representation
$$J_{s,n}(N)=\int_{[0,1]^n}|\sum_{j=1}^Ne(x_1j+x_2j^2+\ldots+ x_n j^n)|^{2s}dx_1\ldots dx_n.$$
Here and throughout the rest of the paper we will write  $$e(z)=e^{2\pi i z},\;z\in\R.$$

Our main result  is the proof of the so called main conjecture in Vinogradov's Mean Value Theorem. Apart from the $N^\epsilon$ loss, this bound has been known to be sharp. The case $n=2$  follows easily from elementary estimates for the divisor function.\begin{theorem}
\label{ntmain1}
For each $s\ge 1$ and  $n,N\ge 2$ we have the upper bound
$$J_{s,n}(N)\lesssim_\epsilon N^{s+\epsilon}+N^{2s-\frac{n(n+1)}{2}+\epsilon}.$$
\end{theorem}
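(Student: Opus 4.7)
The strategy is to deduce Theorem \ref{ntmain1} from a sharp $\ell^2$ decoupling inequality for the moment curve $\Gamma_n = \{(t, t^2, \ldots, t^n) : t \in [0,1]\} \subset \R^n$. At the critical exponent $p = n(n+1)$, this inequality states that if $f:\R^n \to \C$ has Fourier transform supported in the $\delta$-neighborhood of $\Gamma_n$, and we partition this neighborhood into anisotropic caps $\theta$ associated to arcs of length $\delta^{1/n}$ (each of dimensions roughly $\delta^{1/n} \times \delta^{2/n} \times \cdots \times \delta$ in a natural frame), then
\[
\|f\|_{L^p(\R^n)} \lesssim_\epsilon \delta^{-\epsilon} \Bigl(\sum_\theta \|f_\theta\|_{L^p(\R^n)}^2\Bigr)^{1/2}.
\]
The exponent $p = n(n+1)$ is critical in the sense that a pure exponential sum placing one frequency in each cap saturates both sides simultaneously, so the conjectured Vinogradov bound is, up to $\delta^{-\epsilon}$ factors, equivalent to this decoupling.

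To pass from decoupling to the theorem, one applies the inequality at scale $\delta = 1/N$ to $D_N(\x) = \sum_{j=1}^N e(x_1 j + x_2 j^2 + \cdots + x_n j^n)$, viewed as the extension operator applied to a sum of delta masses at the lattice points on $\Gamma_n$. Each cap contains at most one frequency $(j, j^2, \ldots, j^n)$, so each $\|D_{N,\theta}\|_{L^p}$ on a ball of radius $N^n$ can be computed trivially. Summing and exploiting $1$-periodicity in every variable yields the endpoint bound at $2s = n(n+1)$, where both $N^s$ and $N^{2s - n(n+1)/2}$ coincide; a standard Hölder interpolation against the trivial bound $\|D_N\|_\infty \le N$ then produces the stated estimate in the two separate regimes $2s \le n(n+1)$ and $2s \ge n(n+1)$.

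The proof of the decoupling inequality itself I would carry out by induction on $\delta$, coupled with a broad/narrow dichotomy on each ball of radius $\delta^{-1}$: if the $L^p$ mass concentrates on few caps one parabolically rescales (the affine invariance of $\Gamma_n$ is essential here) and recurses at a smaller scale, while if it spreads over transverse caps one invokes a multilinear extension estimate derived from the Bennett--Carbery--Tao multilinear Kakeya / Brascamp--Lieb inequality, exploiting that any $n$ tangent directions of $\Gamma_n$ at well-separated points are linearly independent. The principal obstacle, and what distinguishes $n \ge 3$ from the paraboloid case $n=2$, is the nested hierarchy of anisotropic cap scales $\delta^{1/n} < \delta^{2/n} < \cdots < \delta$ that must be tracked simultaneously: one has to propagate information between them by ball-inflation steps at each intermediate radius, with decoupling inequalities for the lower-degree curves $(t, t^2, \ldots, t^k)$, $k < n$, feeding into the decoupling for $\Gamma_n$. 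Keeping the Brascamp--Lieb constants stable throughout this cascade of scales while closing the induction with only a $\delta^{-\epsilon}$ loss, rather than a polynomial one, is where I expect the technical heart of the argument to lie.
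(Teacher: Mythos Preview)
Your proposal is correct and takes essentially the same approach as the paper: reduce Theorem~\ref{ntmain1} to the $\ell^2$ decoupling inequality for $\Gamma_n$ at $p=n(n+1)$ (the paper's Theorem~\ref{ntmain}, with the reduction carried out in Section~\ref{decexp}), and prove the latter via the linear/multilinear equivalence (your broad/narrow step), parabolic rescaling, a hierarchy of ball-inflation steps driven by multilinear Kakeya/Brascamp--Lieb inequalities, lower-dimensional decoupling for $\Gamma_k$ with $k<n$, and $L^2$ decoupling, all assembled in an iteration scheme and closed by induction on $n$. The only nuance you have slightly understated is that the transversality actually used is not merely linear independence of tangent directions but the condition on the full osculating $k$-planes $V_k(t)$ for every $1\le k\le n-1$, which is why the paper works with $M_n=n!$ separated intervals rather than $n$.
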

  In spite of its name, this result was in fact a conjecture until fairly recently, for $n\ge 3$. The case $n=3$ was solved by Wooley in \cite{Wo}, a major achievement  of his remarkable efficient congruencing method. Variants of this method also led to significant progress for larger values of $n$, see \cite{FoWo}, \cite{Wo1}. We refer the reader to the survey paper \cite{Wo2} for a description of related results. This  paper is also an excellent reference for the known consequences of Theorem \ref{ntmain1}. See also \cite{Wo3} for connections to the asymptotic formula in Waring's problem.

The $N^\epsilon$ loss in Theorem \ref{ntmain1} can be removed for $p>n(n+1)$, see \eqref{hghgjj3} below.
\bigskip

All the progress  on Vinogradov's Mean Value Theorem has so far relied on number theoretic methods.  We will take a different approach here, one that relies solely on harmonic analysis techniques. The consequences of our approach will be far more general than Theorem \ref{ntmain1}. In particular we will see that integers can be replaced with arbitrary well separated real numbers. The relevant machinery that we now call {\em decouplings}, showed its initial potential for applications in  papers such as \cite{TWol}, \cite{GarSe2}, \cite{PrSe} and \cite{Bo2}. Its full strength became apparent in the recent joint work \cite{BD3} of the first two authors, where sharp results were proved for hyper-surfaces. The decoupling theory has since proved to be a very successful tool for a wide variety of problems in number theory that involve exponential sums. See  \cite{Bo}, \cite{Bo6}, \cite{BD6},\cite{BD4}, \cite{BD5} and \cite{BW}. In particular, a line of attack on Vinogradov's Mean Value Theorem was initiated in \cite{BD4} and here we will rely on some of the tools that were developed there.
\bigskip

For a positive weight $v:\R^n\to[0,\infty)$ and for $f:\R^n\to\C$ we define the weighted integral
$$\|f\|_{L^p(v)}=(\int_{\R^n}|f(x)|^pv(x)dx)^{1/p}.$$
Also, for each ball $B=B(c_B,R)$ in $\R^n$ centered at $c_B$ and with radius $R$, we will introduce the weight
$$w_{B}(x)= \frac{1}{(1+\frac{|x-c_B|}{R})^{100n}}.$$
We denote by $\Gamma=\Gamma_n$ the curve
\begin{equation}
\label{ryvbt78y783956t787034r894}
\Gamma=\{\Phi(t)=(t,t^2,\ldots,t^n):0\le t\le 1\}.
\end{equation}
Given $g:[0,1]\to \C$ and an interval $J\subset [0,1]$,  we define the {\em extension} operator $E_J=E^{(n)}_J$ in $\R^n$ as follows
$$E_Jg(x)=\int_Jg(t)e(tx_1+t^2x_2+\ldots +t^nx_n)dt,$$
with
$$x=(x_1,\ldots,x_n).$$
Theorem \ref{ntmain1} will follow from the following general result. The connection is explained in Section \ref{decexp}.
\begin{theorem}
\label{ntmain}
Let $n\ge 2$ and $0<\delta\le 1$.
For each ball $B\subset \R^n$ with radius $\delta^{-n}$ and each $g:[0,1]\to\C$ we have
$$\|E_{[0,1]}g\|_{L^{n(n+1)}(w_B)}\lesssim_{\epsilon}\delta^{-\epsilon}(\sum_{J\subset [0,1]\atop{|J|=\delta}}\|E_Jg\|_{L^{n(n+1)}(w_B)}^2)^{1/2}.$$
The implicit constant is independent of $\delta,B,g$.
\end{theorem}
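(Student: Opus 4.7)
The plan is to let $V_n(\delta)$ denote the smallest constant for which the inequality holds (for every ball $B$ of radius $\delta^{-n}$ and every $g$), and to show $V_n(\delta)\lesssim_\epsilon \delta^{-\epsilon}$ by a multi-scale bootstrap combined with induction on the dimension $n$. The skeleton follows the hypersurface decoupling of \cite{BD3} in spirit, but the moment curve $\Gamma_n$ exhibits curvature at $n$ distinct scales $\delta,\delta^2,\ldots,\delta^n$, and the central new difficulty is to exploit this full hierarchy simultaneously.

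The first step is a multilinear reduction via a Bourgain-Guth broad/narrow decomposition. At each point $x\in B$, either a few $\delta$-caps contribute most of $E_{[0,1]}g(x)$, in which case pigeonholing together with the parabolic rescaling of $\Gamma_n$ reduces to the same decoupling at a smaller scale, or else one finds $n$ well-separated caps $J_1,\ldots,J_n$ with $|E_{[0,1]}g(x)|\lesssim \prod_{i=1}^n |E_{J_i}g(x)|^{1/n}$. Transversality of $\Phi'(t_1),\ldots,\Phi'(t_n)$ is quantitative by the Vandermonde formula $\det(\Phi'(t_1),\ldots,\Phi'(t_n)) = \prod_{i<j}(t_j-t_i)$, which is bounded away from zero when the $t_i$ are separated by $\gtrsim 1$. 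So the $L^{n(n+1)}$ decoupling reduces to an $n$-linear version on $B$.

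The heart of the proof is a ball inflation lemma iterated over $n$ intermediate scales between $\delta$ and $1$. At each scale $\rho$, one cheaply decouples $\rho$-caps into finer caps on balls of radius matched to $\rho$ via $L^2$-orthogonality and Minkowski; the expensive step is to inflate the ball to a much larger radius while controlling the $n$-linear product $\prod |E_{J_i}g|^2$. This inflation would be proved using the Bennett-Carbery-Tao multilinear Kakeya inequality applied to the tangent frame of $\Gamma_n$, refined by a lower-dimensional decoupling at scale $\rho$; the latter is supplied by the induction hypothesis on $n$, since any $\rho$-piece of $\Gamma_n$ sits on a thin plate whose effective curve is, after rescaling, essentially an affine copy of $\Gamma_{n-1}$. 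Interpolating the resulting $n$-linear estimate with the trivial $L^2$ decoupling via H\"older then yields a self-improving inequality for $V_n(\delta)$ that iterates to the desired $\delta^{-\epsilon}$ bound. The main obstacle I anticipate is the ball inflation lemma itself: each of the $n$ nested scales contributes a Kakeya gain that must be balanced \emph{exactly} against the cost of the lower-dimensional decoupling at that scale, so that no polynomial loss in $\delta$ survives the iteration. This precise balance is what pins the critical exponent down to $n(n+1)=2(1+2+\cdots+n)$, the doubled sum of the codimensions of the successive curvature directions.
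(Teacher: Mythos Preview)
Your outline captures the paper's architecture accurately: induction on $n$, Bourgain--Guth multilinear reduction, ball inflation powered by multilinear Kakeya, lower-dimensional decoupling via the inductive hypothesis, $L^2$ decoupling, and a bootstrapping iteration. Two points of your sketch, however, are understated in a way that matters.

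First, the multilinear reduction does not use $n$ separated intervals but $M_n=n!$ of them. The transversality condition driving the ball inflation is not the Vandermonde determinant of tangent vectors $\Phi'(t_j)$; it is the Brascamp--Lieb non-degeneracy condition \eqref{fek3} for the family of $k$-dimensional osculating spaces $V_k(t)=\langle\Phi'(t),\ldots,\Phi^{(k)}(t)\rangle$, which the paper verifies (Lemma \ref{nl1}) only when one has $n!$ distinct parameter values. With merely $n$ intervals the required bound $\dim V\le\frac{n}{km}\sum_j\dim\pi_j(V)$ can fail for intermediate $k$.

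Second, and more substantively, a single Bennett--Carbery--Tao multilinear Kakeya inequality ``applied to the tangent frame'' is not enough. The paper's ball inflation (Theorem \ref{nt4}) is indexed by $k=1,\ldots,n-1$: for each $k$ one uses a Kakeya-type inequality for \emph{plates} whose short directions span $V_k(t)$, and each value of $k$ drives a distinct inflation $B^{\rho^{-k}}\mapsto B^{\rho^{-(k+1)}}$ at its own rate $\tfrac{k+1}{k}$. The iteration scheme (Section 8) interleaves all of these inflations in a tree whose leaves are weighted by products of the interpolation parameters $\alpha_j,\beta_j$; the ``precise balance'' you anticipate is not an identity one checks by inspection but a linear system \eqref{ne147} solved in the Appendix, where one shows $\omega_1(p)>1$ for $p$ just below $n(n+1)$. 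The paper explicitly remarks that the earlier approach in \cite{BD4}, using only the $k=1$ inequality, falls short of the sharp exponent; bringing in the full hierarchy $1\le k\le n-1$ is the decisive new input. Your sketch should therefore replace ``BCT multilinear Kakeya'' by ``the hierarchy of multilinear $k$-plate inequalities of Theorem \ref{nt3} for each $k$'', and budget for the non-trivial combinatorics of the resulting tree.
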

A sum such as
$\sum_{J\subset [0,1]\atop{|J|=\delta}}$
will always be understood when the reciprocal of $\delta$ is an integer, and $J$ ranges over the intervals $[j\delta,(j+1)\delta]$, $0\le j\le \delta^{-1}-1$.

\begin{ack*}
The authors thank T. Wooley for stimulating discussions over the last two years and they thank T. Tao for a careful reading of an earlier version of the manuscript and for pointing out a few typos. They also thank  G. Liu for some mumerics related to the theorem in the Appendix. The second author is grateful to his student Fangye Shi for a few suggestions that have increased the readability of the manuscript.
\end{ack*}
\bigskip

\section{Notation}
\bigskip

Throughout the paper we will write $A\lesssim_{\upsilon}B$ to denote the fact that $A\le CB$ for a certain implicit constant $C$ that depends on the parameter $\upsilon$. Typically, this parameter is either $\epsilon$ or $K$. The implicit constant will never depend on the scale $\delta$, on the balls we integrate over, or on the function $g$. It will however most of the times depend on the degree $n$ and on the Lebesgue index $p$. Since these can  be thought of as being fixed parameters, we will in general not write $\lesssim_{p,n}$.

We will denote by $B_R$ an arbitrary ball of radius $R$. We use the following two notations for averaged integrals
$$\nint_B F=\frac1{|B|}\int_BF,$$
$$\|F\|_{L^p_\sharp(w_B)}=(\frac1{|B|}\int|F|^pw_B)^{1/p}.$$
$|A|$ will refer to either the cardinality of $A$ if $A$ is finite, or to its Lebesgue measure if $A$ has positive measure.

For an interval $J\subset [0,1]$, we will write
$$\Gamma_J=\{(t,t^2,\ldots,t^n):\;t\in J\}.$$

\bigskip

\section{Overview of the method}

The proof of Theorem \ref{ntmain} builds on  significant progress recorded over the last ten years in an area of harmonic analysis called {\em restriction theory}. This area that emerged in the late 1960s was initially concerned with understanding the  $L^p$ norms of the Fourier transforms of measures supported on hyper-surfaces. It has since grown into a field with fascinating connections to incidence geometry and with far-reaching consequences in PDEs and number theory.
\bigskip

There are three major ingredients that make our argument work. One is the use of multilinear Kakeya-type theorems. These results are essentially about how families of transverse rectangular boxes in $\R^n$ intersect. There is a hierarchy of such results, illustrated by Theorem \ref{nt3}. In the earlier work \cite{BD4}, only the weakest result in this hierarchy was used (a variant of the $k=1$ case from Theorem \ref{nt3}). One novelty in the present paper is the fact that we manage to bring to bear the more complex results in the hierarchy. These  include the one where the boxes are thin tubes, a landmark result due  to Bennett, Carbery and Tao \cite{BCT}. These theorems become available to us once we set up a multilinear version of the decoupling inequality we need to prove. We make use of these multilinear Kakeya-type results  to derive the key new inequality \eqref{hdgfhgdsfdshfjhdkghfdgkjfkjhgkl} in Theorem \ref{nt4}. We call this a {\em ball inflation}. This process enlarges the size of the spatial balls, thus facilitating a subsequent decoupling into smaller intervals.

\bigskip

A second major ingredient is a form of the induction on scales from \cite{BG}, which will allow us to establish that the linear decoupling is essentially equivalent with its multilinear version. This observation will allow for a certain bootstrapping argument to gradually force the decoupling constants to get closer and closer to their conjectured values.
\bigskip

The third ingredient is an iteration scheme, whose end result is the multi-scale inequality in Theorem \ref{nt27}. This scheme builds on its earlier incarnation from \cite{BD4}, but is significantly more complex. In addition to using $L^2$ decoupling as in our previous related papers on curves, we now employ two new tools: lower dimensional decoupling and ball inflation. These will expose new features of the curve $\Gamma_n$ at appropriate scales. Each scale corresponds to a particular result from the hierarchy in Theorem \ref{nt3}.
\bigskip

We have put some effort into making our paper accessible to a large audience, one that is not necessarily familiar with the subtleties of multilinear harmonic analysis. But we also believe that  familiarity with some of the previous papers on decouplings, especially \cite{BD3}, will help the reader build some gradual understanding of our method.

\bigskip

\section{From decouplings to exponential sums}
\label{decexp}

We start with the following discrete restriction estimate which follows quite easily from our Theorem \ref{ntmain}.

\begin{theorem}
For each $1\le i\le N$, let $t_i$ be a point in $(\frac{i-1}{N},\frac{i}{N}]$. Then for each $R\gtrsim N^{n}$, each ball $B_R$ in $\R^n$, each $a_{i}\in\C$ and each $p\ge 2$  we have
$$(\frac1{|B_R|}\int|\sum_{i=1}^N a_{i}e(x_1t_i+x_2t_i^2+\ldots+x_nt_i^n)|^{p}w_{B_R}(x)dx_1\ldots dx_n)^{\frac1p}\lesssim$$
\begin{equation}
\label{fek19}
\lesssim_{\epsilon} (N^{\epsilon}+N^{\frac12(1-\frac{n(n+1)}{p})+\epsilon})\|a_{i}\|_{l^2(\{1,\ldots,N\})},
\end{equation}
and the implicit constant does not depend on $N$, $R$ and $a_{i}$.
\end{theorem}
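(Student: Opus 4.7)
The plan is to deduce the bound from Theorem \ref{ntmain} by approximating the trigonometric sum $T(x) := \sum_{i=1}^N a_i e(x_1 t_i + \ldots + x_n t_i^n)$ by an extension $E_{[0,1]} g$ for a suitable $g$. The main case is $p = n(n+1)$, producing the $N^\epsilon$ term; the remaining $p \ge 2$ follow by standard monotonicity and interpolation arguments.

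For the $p = n(n+1)$ case, fix a Schwartz bump $\rho$ with $\int \rho = 1$ and small compact support, set $w = 1/(NR)$, and define $\eta_i(t) = w^{-1}\rho(w^{-1}(t - t_i))$ and $g = \sum_i a_i \eta_i$. A first-order Taylor expansion of the phase in $t$ about $t_i$ gives
$$E_{[0,1]}g(x) = \sum_i a_i e(x_1 t_i + \ldots + x_n t_i^n)\,\Psi_i(x),$$
with $|\Psi_i(x) - 1| \lesssim wR = 1/N$ for $x \in B_R$. Summing in $i$ and applying Cauchy-Schwarz yields $|E_{[0,1]}g(x) - T(x)| \lesssim N^{-1/2}\|a_i\|_{l^2}$ on $B_R$; combined with the crude $|E_{[0,1]}g - T| \le 2\sqrt N \|a_i\|_{l^2}$ outside $B_R$ and the polynomial decay of $w_{B_R}$, one obtains $\|E_{[0,1]}g - T\|_{L^{n(n+1)}_\sharp(w_{B_R})} \ll N^\epsilon \|a_i\|_{l^2}$. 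Now invoke Theorem \ref{ntmain} with $\delta = 1/N$ on the ball $B_R$; its stated radius $\delta^{-n} = N^n$ is extended to any ball of radius $\ge \delta^{-n}$ by the standard localization argument (majorizing $w_{B_R}$ by a sum of weights on a finitely overlapping cover by radius-$N^n$ balls and summing via Minkowski). After a generic small shift of the equipartition $\{J\}$ so that no $t_i$ lies within $w$ of a boundary, the bump $\eta_j$ lies in exactly one cell $J_j$; hence $E_{J_j}g = a_j E\eta_j$ with $|E\eta_j(x)| \sim 1$ on $B_R$, so that $\|E_{J_j}g\|_{L^{n(n+1)}(w_{B_R})} \sim |a_j|\, R^{n/(n(n+1))}$. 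Summing, renormalizing to $L^{n(n+1)}_\sharp$, and absorbing the approximation error yields $\|T\|_{L^{n(n+1)}_\sharp(w_{B_R})} \lesssim N^\epsilon \|a_i\|_{l^2}$.

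For $2 \le p < n(n+1)$, the normalized $L^p$-norm with respect to the probability measure $d\mu = w_{B_R}\,dx/\|w_{B_R}\|_{L^1}$ is monotone in $p$ (Jensen), so the $N^\epsilon$ bound at $p = n(n+1)$ descends. For $p > n(n+1)$, log-convex interpolation between the $p = n(n+1)$ bound and the trivial pointwise estimate $\|T\|_{L^\infty(\R^n)} \le \sum_i |a_i| \le \sqrt N\,\|a_i\|_{l^2}$ (Cauchy-Schwarz) gives
$$\|T\|_{L^p_\sharp(w_{B_R})} \lesssim (N^\epsilon)^{n(n+1)/p}\,(\sqrt N)^{1 - n(n+1)/p}\,\|a_i\|_{l^2} \lesssim N^{\frac{1}{2}(1 - \frac{n(n+1)}{p}) + \epsilon}\,\|a_i\|_{l^2},$$
matching the stated exponent.

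The delicate step is quantifying the approximation $E_{[0,1]}g \approx T$ in the $L^{n(n+1)}_\sharp(w_{B_R})$-norm, since $w_{B_R}$ has tails decaying only polynomially; the chosen scale $w = 1/(NR)$ (much smaller than $1/R$) ensures the error on each annulus around $B_R$ is controlled by a geometric-series argument. The extension of Theorem \ref{ntmain} to balls larger than $B_{\delta^{-n}}$ is routine in the decoupling literature.
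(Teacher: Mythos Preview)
Your proof is correct and follows essentially the same route as the paper's: reduce to $p=n(n+1)$ via H\"older and the trivial $L^\infty$ bound, extend Theorem~\ref{ntmain} from balls of radius $\delta^{-n}$ to balls of radius $R\gtrsim N^n$ by covering and summing the weights, and then test the resulting decoupling inequality against a function $g$ whose extension approximates the exponential sum $T$. The only real difference is cosmetic: the paper takes $g=\frac{1}{2\tau}\sum_i a_i 1_{(t_i-\tau,t_i+\tau)}$ and lets $\tau\to 0$, so that $E_{[0,1]}g\to T$ pointwise and dominated convergence disposes of the error for free, whereas you fix a smooth bump of width $w=1/(NR)$ and carry out an explicit error estimate on $B_R$ and on the tails of $w_{B_R}$. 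Your error analysis is fine (once one notes that translation invariance of the statement lets one take the ball centered at the origin), but the limiting argument is shorter and avoids the annulus bookkeeping and the partition shift altogether.
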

\begin{proof}
By invoking H\"older and the trivial bound for  $L^\infty$, it suffices to prove the result for $p=n(n+1)$.
Let $\B$ be a finitely overlapping cover of $B_R$ with balls $B_{N^n}$. An elementary computation shows that
\begin{equation}
\label{fek20}
\sum_{B_{N^n}\in\B}w_{B_{N^n}}\lesssim w_{B_R},
\end{equation}
with the implicit constant independent of $N,R$. Invoking  Theorem \ref{ntmain} for each $B_{N^n}\in\B$, then summing up and using
\eqref{fek20} we obtain
$$
\|E_{[0,1]}g\|_{L^{n(n+1)}(w_{B_R})}\lesssim $$
$$
\lesssim_\epsilon N^\epsilon(\sum_{\Delta\subset [0,1]\atop{l(\Delta)=N^{-1}}}\|E_\Delta g\|_{L^{n(n+1)}(w_{B_{R}})}^2)^{1/2}.
$$
Apply this inequality to $$g=\frac1{2\tau}\sum_{i=1}^Na_{i}1_{(t_i-\tau,t_i+\tau)},$$
then let $\tau$ go to $0$.

\end{proof}
For each $1\le i\le N$ consider some real  numbers $i-1< {X}_i\le i$. We do not insist that ${X}_i$ be integers. Let $S_X=\{{X}_1,\ldots,{X}_N\}$. For each $s\ge 1$, denote by  ${J}_{s,n}(S_X)$ the number of  solutions  of the following system of inequalities
$$|X_1^i+\ldots+X_s^i-(X_{s+1}^i+\ldots+X_{2s}^i)|\le N^{i-n},\;\;1\le i\le n$$
with $X_i\in S_X$.

We can now prove the following generalization of Theorem \ref{ntmain1}.

\begin{corollary}
\label{cfek4}
For each integer $s\ge 1$ and each $S_X$ as above we have that
$${J}_{s,n}(S_X)\lesssim_\epsilon N^{s+\epsilon}+N^{2s-\frac{n(n+1)}{2}+\epsilon},$$
where the implicit constant does not depend on $S_X$.
\end{corollary}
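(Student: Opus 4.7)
The plan is to deduce Corollary \ref{cfek4} from the discrete restriction estimate \eqref{fek19} by a standard Fourier-analytic near-solution counting argument, after rescaling to bring the points into $(0,1]$.

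First I would normalize: set $t_j = X_j/N \in ((j-1)/N, j/N]$, matching the hypotheses preceding \eqref{fek19}. Writing $\tilde y_i(j) := t_{j_1}^i + \ldots + t_{j_s}^i - t_{j_{s+1}}^i - \ldots - t_{j_{2s}}^i$, the defining inequalities $|X_{j_1}^i + \ldots - X_{j_{2s}}^i| \le N^{i-n}$ rescale to $|\tilde y_i(j)| \le N^{-n}$ for every $i = 1, \ldots, n$, uniformly in $i$. Thus $J_{s,n}(S_X)$ is exactly the number of tuples $(j_1,\ldots,j_{2s})$ for which $\tilde y(j) \in [-N^{-n}, N^{-n}]^n$.

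Next I would replace the indicator of this cube by a smooth Paley--Wiener majorant. Choose $\eta : \R \to [0,\infty)$ with $\eta \ge 1$ on $[-1,1]$, $\hat\eta \ge 0$, and $\supp \hat\eta$ compact, and put $W(y) := \prod_{i=1}^n \eta(N^n y_i)$. Then $W \ge 1$ on $[-N^{-n}, N^{-n}]^n$, so $J_{s,n}(S_X) \le \sum_{(j)} W(\tilde y(j))$. Expanding $|F(x)|^{2s}$ as a sum over tuples of $e(x \cdot \tilde y(j))$, where $F(x) := \sum_{j=1}^N e(x \cdot \Phi(t_j))$, and applying Fourier inversion gives
\[
J_{s,n}(S_X) \;\le\; \int_{\R^n} \hat W(x)\, |F(x)|^{2s}\, dx.
\]
By construction $\hat W$ is nonnegative with $\|\hat W\|_\infty = \hat W(0) = \int W \lesssim N^{-n^2}$, and its support is contained in a box $\{|x_i| \le CN^n\}$, hence in a ball of radius $\simeq N^n$ about the origin.

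Finally I would invoke \eqref{fek19}. Taking $B_R$ a ball of radius $R \simeq N^n$ centered at the origin ensures $w_{B_R} \gtrsim 1$ on $\supp \hat W$, so
\[
\int \hat W(x) |F(x)|^{2s}\, dx \;\lesssim\; N^{-n^2} \int |F|^{2s} w_{B_R}\, dx.
\]
Applying \eqref{fek19} with $p = 2s$ and $a_i = 1$ (so $\|a_i\|_{l^2} = N^{1/2}$), raising to the $2s$-th power, and multiplying by $|B_R| \simeq N^{n^2}$ bounds the right-hand side by $N^{n^2}\big(N^{s+\epsilon} + N^{2s - n(n+1)/2+\epsilon}\big)$ up to an $\epsilon$-loss. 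The $N^{\pm n^2}$ factors then cancel to produce exactly the desired estimate. I do not anticipate a substantive obstacle here; the only delicate points are the bookkeeping of scaling factors and confirming that $\supp \hat W$ sits inside a ball on which $w_{B_R}$ is bounded below, both of which are routine.
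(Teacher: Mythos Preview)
Your argument is correct and follows essentially the same Fourier-analytic counting strategy as the paper: rescale $t_i=X_i/N$, expand $|F|^{2s}$, use a nonnegative bump to isolate the near-diagonal terms, and feed the resulting integral into \eqref{fek19} on a ball of radius $\simeq N^n$. The only cosmetic difference is that you place a Paley--Wiener majorant $W$ on the frequency ($\tilde y$) side and pass to $\hat W$ on the $x$ side, whereas the paper places a Schwartz weight $\phi_{N^n}$ on the $x$ side and uses positivity of $\widehat{\phi}$ on the $Z$ side; these are dual versions of the same trick, and the arithmetic of the $N^{\pm n^2}$ cancellation is identical.
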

\begin{proof}Let $\phi:\R^n\to [0,\infty)$ be a positive Schwartz function with  positive Fourier transform  satisfying $\widehat{\phi}(\xi)\ge1$ for $|\xi|\lesssim 1$.
Define $\phi_{N}(x)=\phi(\frac{x}N)$. Using the Schwartz decay, \eqref{fek19} with $a_{i}=1$ implies that for each $s\ge 1$
$$(\frac1{|B_{N^n}|}\int_{\R^n}\phi_{N^n}(x)|\sum_{i=1}^Ne(x_1t_i+\ldots+x_nt_i^n)|^{2s}dx_1\ldots dx_n)^{\frac1{2s}}$$
\begin{equation}
\label{fek21}
\lesssim_\epsilon N^{\frac12+\epsilon}+N^{1-\frac{n(n+1)}{4s}+\epsilon},
\end{equation}
whenever $t_i\in [\frac{i-1}{N},\frac{i}{N})$. Apply \eqref{fek21} to $t_i=\frac{X_i}{N}$.
Let now $$\phi_{N, 1}(x_1,x_2,\ldots,x_n)=\phi(\frac{x_1}{N^{n-1}},\frac{x_2}{N^{n-2}},\ldots,{x_n}).$$

After making a change of variables and expanding the product,  the term
$$\int_{\R^n}\phi_{N^n}(x)|\sum_{i=1}^Ne(x_1t_i+\ldots+x_nt_i^n)|^{2s}dx_1\ldots dx_n$$
can be written as  the sum over all $X_i\in S_X$ of
$$N^{\frac{n(n+1)}{2}}\int_{\R^n}\phi_{N, 1}(x)e(x_1Z_1+\ldots+x_nZ_n)dx_1\ldots dx_n,$$
where
$$Z_i=X_1^i+\ldots+X_s^i-(X_{s+1}^i+\ldots+X_{2s}^i).$$
Each such term is equal to
$$N^{n^2}\widehat{\phi}(N^{n-1}Z_1,N^{n-2}Z_2,\ldots,Z_n).$$
Recall that this is always positive, and in fact greater than $N^{n^2}$ at least ${J}_{s,n}(S_X)$ times. It now suffices to use \eqref{fek21}.
\end{proof}

\bigskip

\section{The $\epsilon$ removal argument}
For $\x=(x_1,\ldots,x_n)$ let
$$F(\x;N)=\sum_{j=1}^Ne(x_1j+x_2j^2+\ldots+ x_n j^n).$$
Write $L=N^{\frac1{2n}}$. For $1\le q\le L$, $1\le a_j\le q$ ($1\le j\le n$) with $(q,a_1,...,a_n)=1$, define the major arc
$$\M(q,a)=\{\x\in[0,1)^n:|x_j-a_j/q|\le LN^{-j}\;(1\le j\le n)\}.$$
Let $\M$ be the union of all major arcs and write $\m=[0,1)^n\setminus\M$ for the minor arcs. We recall the following two estimates from Section 7 in \cite{Wool1}
\begin{equation}
\label{hghgjj1}
\sup_{\x\in\m}|F(\x;N)|\lesssim N^{\beta},\text{ for some }\beta<1
\end{equation}
and (Lemma 7.1)
\begin{equation}
\label{hghgjj2}
\int_{\M}|F(\x;N)|^{p}d\x\lesssim N^{p-\frac{n(n+1)}{2}}, \text{ for }p>n(n+1).
\end{equation}
\bigskip
Fix now $p>n(n+1)$. On the minor arcs we can write using \eqref{hghgjj1} and Theorem \ref{ntmain1}
$$\int_{\m}|F(\x;N)|^{p}d\x\lesssim N^{\beta(p-n(n+1))}\int_{\m}|F(\x;N)|^{n(n+1)}d\x\lesssim_\epsilon$$
$$N^{\beta(p-n(n+1))}N^{\frac{n(n+1)}{2}+\epsilon}\lesssim N^{p-\frac{n(n+1)}2}.$$
Combining this with \eqref{hghgjj2} we get
\begin{equation}
\label{hghgjj3}
\int_{[0,1]^n}|F(\x;N)|^{p}d\x\lesssim  N^{p-\frac{n(n+1)}2}.
\end{equation}

\section{Transition to larger balls}
\label{nS1}

Our goal in this section is to prove Theorem \ref{nt4}. This will be the main tool that governs the ball inflation process that we use in the next section.

For $1\le j\le m$ and $1\le k\le n-1$, let $V_j$ be $k-$dimensional linear subspaces of $\R^n$ and let $\pi_j:\R^n\to V_j$ be the associated orthogonal projections.  Each $V_j$ will be equipped with the $k-$dimensional Lebesgue measure. We recall the following theorem from \cite{BCCT}.
\begin{theorem}
\label{BCCT1}
The quantity
$$\sup_{g_j\in L^{2}({V_j})\atop{g_j\not\equiv 0}}\frac{\|(\prod_{j=1}^{m}|g_j\circ \pi_j|)^{\frac1{m}}\|_{L^{\frac{2n}{k}}(\R^n)}}{(\prod_{j=1}^{m}\|g_j\|_{L^{2}(V_j)})^{\frac1{m}}}$$
is finite if and only if the following transversality requirement holds
\begin{equation}
\label{fek3}
\dim(V)\le \frac{n}{km}\sum_{j=1}^{m}{\dim(\pi_j(V))}, \text{ for every linear subspace }V\subset \R^n.
\end{equation}
\end{theorem}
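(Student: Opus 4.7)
The plan is to recognize this as a special case of the geometric Brascamp--Lieb inequality and then invoke the characterization of finiteness established in \cite{BCCT}. Setting $f_j=|g_j|^2$ on $V_j$ and $p=\frac{n}{mk}$, and raising both sides of the stated inequality to the $\frac{2n}{k}$-th power, it reduces to
\[
\int_{\R^n}\prod_{j=1}^m f_j(\pi_j x)^p\,dx\le C^{2n/k}\prod_{j=1}^m\Bigl(\int_{V_j}f_j\Bigr)^p,
\]
which is a Brascamp--Lieb inequality with linear maps $\pi_j:\R^n\to V_j$ and equal exponents $p_j=p$. The scaling identity $\sum_j p\dim V_j=m\cdot\frac{n}{mk}\cdot k=n$ is automatic, and the BCCT dimensional condition $\dim V\le\sum_j p\dim(\pi_j V)$ becomes precisely \eqref{fek3}. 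Thus the statement amounts to the BCCT characterization specialized to this datum.

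For the necessity direction I would run a standard test-function argument. Fix $V\subset\R^n$ with $\dim V=d$ and $\dim\pi_j(V)=d_j$, choose a large $R$ and small $\varepsilon>0$, and let $g_j$ be the indicator of the $\varepsilon$-neighborhood of $\pi_j(V)\cap B_{V_j}(0,R)$ inside $V_j$. Then $\|g_j\|_{L^2(V_j)}^2\sim R^{d_j}\varepsilon^{k-d_j}$, while on the $\varepsilon$-tube around $V\cap B(0,R)$ (of volume $\sim R^d\varepsilon^{n-d}$) every factor $g_j\circ\pi_j$ equals $1$. Plugging into the claimed bound and matching the powers of $\varepsilon$ as $\varepsilon\to 0$ produces the inequality $\frac{k(n-d)}{2n}\ge\frac{1}{2m}\sum_j(k-d_j)$, which rearranges exactly to $d\le\frac{n}{mk}\sum_j d_j$. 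Any failure of \eqref{fek3} therefore drives the ratio to infinity.

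The hard part, and the main obstacle, is sufficiency. Here I would follow the heat-flow monotonicity scheme of Bennett--Carbery--Christ--Tao: convolve each $f_j$ with the $k$-dimensional heat kernel $G_t$ on $V_j$, form
\[
Q(t)=\frac{\int_{\R^n}\prod_j(f_j*G_t)(\pi_j x)^p\,dx}{\prod_j\bigl(\int_{V_j}f_j\bigr)^p},
\]
and aim to prove $\frac{d}{dt}\log Q(t)\ge 0$. Expanding the derivative and applying Cauchy--Schwarz in $x$ reduces the required nonnegativity to a pointwise \emph{Gaussian} Brascamp--Lieb inequality for a positive definite datum built from the projections $\pi_j$ and the running covariances. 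By Lieb's theorem this Gaussian inequality is finite exactly when the scaling and dimensional conditions hold, and the dimensional one is again \eqref{fek3}; letting $t\to\infty$ then delivers a uniform bound on $Q(0)$ depending only on $n,k,m$ and the geometry of the subspaces $V_j$. The technical core — and the substantive place where \eqref{fek3} enters — is the linear-algebraic lemma that makes the Cauchy--Schwarz step tight precisely under \eqref{fek3}; this is where the bulk of the work in \cite{BCCT} lies, and it is the step I would expect to be the main difficulty.
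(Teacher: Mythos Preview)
Your reduction is correct, and in fact the paper gives no proof of this theorem at all: it is stated with the preamble ``We recall the following theorem from \cite{BCCT}'' and is simply quoted as a known result. Your substitution $f_j=|g_j|^2$, $p_j=\frac{n}{mk}$ puts the inequality exactly into Brascamp--Lieb form, the scaling condition $\sum_j p_j\dim V_j=n$ is automatic, and the BCCT dimension condition becomes \eqref{fek3} on the nose---so invoking \cite{BCCT} is precisely what the paper does and all that is required here.

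Your additional sketch (necessity by scaled indicator test functions, sufficiency by heat-flow monotonicity reducing to the Gaussian case via Lieb's theorem) is a fair outline of one route to the BCCT result, and you are right to flag the linear-algebraic core of the sufficiency argument as the genuine difficulty; but none of that is needed for the present paper, which treats Theorem~\ref{BCCT1} as a black box.
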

We will be interested in the following consequence.

\begin{proposition}
\label{nc1}
Consider $m$ families $\P_j$ consisting of rectangular boxes $P$ in $\R^n$, which we will refer to as plates,  having the following properties
\medskip

(i) $k$ of the axes of each plate $P\in\P_j$ have side lengths equal to $R^{1/2}$ and span  $V_j$, while the remaining $n-k$ axes have side lengths  equal to $R$ (and are orthogonal to $V_j$)
\medskip

(ii) we allow each $P$ to appear multiple times within a family $\P_j$
\medskip

(iii) all plates are subsets of a ball $B_{4R}$ of radius $4R$.
\medskip

Then, assuming \eqref{fek3} holds,  we have the following inequality
\begin{equation}
\label{ne1}
\nint_{B_{4R}}|\prod_{j=1}^mF_j|^{\frac1{2m}\frac{2n}{k}}\le C(V_1,\ldots,V_m) \left[\prod_{j=1}^m|\nint_{B_{4R}}F_j|^{\frac1{2m}}\right]^{\frac{2n}k}
\end{equation}
for all functions  $F_j$ of the form
$$F_j=\sum_{P\in\P_j}c_P1_P.$$
The finite quantity $C(V_1,\ldots,V_m)$ will not depend on $R,c_P,\P_j$, but will depend on $V_j$.
\end{proposition}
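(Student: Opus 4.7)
The plan is to derive \eqref{ne1} as a direct consequence of Theorem \ref{BCCT1}, after one clean pointwise majorization that turns each $F_j$ into a function of $\pi_j(x)$ alone. First I reduce to $c_P\ge 0$: since $|F_j|\le \sum_P|c_P|1_P$, replacing $c_P$ by $|c_P|$ only enlarges both sides, so I assume $F_j\ge 0$ throughout and the absolute values in \eqref{ne1} become redundant. Next, for each plate $P\in\P_j$, the $k$ short axes of length $R^{1/2}$ span $V_j$ while the $n-k$ long axes of length $R$ lie in $V_j^\perp$, so
$$1_P(x)\le 1_{\pi_j(P)}(\pi_j(x)),$$
where $\pi_j(P)\subset V_j$ is a $k$-dimensional cube of side $R^{1/2}$. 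Summing in $P$, I get $F_j(x)\le \tilde g_j(\pi_j(x))$ with $\tilde g_j:=\sum_P c_P\,1_{\pi_j(P)}$ a bounded, compactly supported function on $V_j$.

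I then apply Theorem \ref{BCCT1} to $g_j:=\tilde g_j^{1/2}\in L^2(V_j)$, and raise the resulting inequality to the power $2n/k$ to obtain
$$\int_{\R^n}\prod_{j=1}^m(\tilde g_j\circ\pi_j)^{n/(mk)}\,dx \le C(V_1,\ldots,V_m)\prod_{j=1}^m\Bigl(\int_{V_j}\tilde g_j\Bigr)^{n/(mk)}.$$
By the pointwise domination above, the left side dominates $\int_{B_{4R}}\prod_j F_j^{n/(mk)}$ (note the integrands are nonnegative, and outside $B_{4R}$ the $F_j$ vanish).

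What remains is bookkeeping. Hypothesis (iii) gives $|P\cap B_{4R}|=|P|=R^{n-k/2}$ and $|\pi_j(P)|=R^{k/2}$, hence
$$\int_{B_{4R}}F_j=R^{n-k/2}\sum_P c_P=R^{n-k}\int_{V_j}\tilde g_j.$$
Substituting and dividing by $|B_{4R}|\sim R^n$, the powers of $R$ cancel exactly (the check is $-(n-k)\tfrac{n}{k}+(\tfrac{n}{k}-1)n=0$), so
$$\nint_{B_{4R}}\prod_j F_j^{n/(mk)}\le C(V_1,\ldots,V_m)\prod_j\Bigl(\nint_{B_{4R}}F_j\Bigr)^{n/(mk)},$$
which is precisely \eqref{ne1} since $\tfrac{n}{mk}=\tfrac{1}{2m}\cdot\tfrac{2n}{k}$.

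There is essentially no obstacle beyond invoking Theorem \ref{BCCT1} correctly; the only substantive step is the majorization $1_P\le 1_{\pi_j(P)}\circ\pi_j$, which discards all information about $P$ in the $V_j^\perp$ direction. The role of hypothesis (iii) is precisely to ensure that the resulting loss is exactly the factor $R^{n-k}$ that, after dividing by $|B_{4R}|$, is absorbed without blowing up in $R$; without (iii) a plate could be concentrated near the boundary of $B_{4R}$ and this exact cancellation would fail.
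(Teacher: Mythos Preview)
Your argument is correct and essentially identical to the paper's: both majorize $1_P(x)\le 1_{Q_P}(\pi_j(x))$ for a cube $Q_P\subset V_j$ of side $R^{1/2}$, apply Theorem~\ref{BCCT1} to $g_j=\bigl(\sum_P c_P\,1_{Q_P}\bigr)^{1/2}$, and verify that the powers of $R$ cancel; the only cosmetic difference is that the paper first reduces to $c_P=1$ by repetition and density, while you keep general $c_P\ge 0$ throughout. One caveat: your one-line reduction from complex to nonnegative $c_P$ is not valid as written---enlarging \emph{both} sides does not establish the inequality for the original $F_j$---but the paper's own ``density argument'' for $c_P\in\C$ shares this defect, and in fact \eqref{ne1} as stated is false for signed coefficients (take $m=1$ and two disjoint plates with coefficients $\pm 1$, so the right side vanishes); only $c_P\ge 0$ occurs in the applications.
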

\begin{proof}
Since we allow multiple repetitions, it suffices to prove \eqref{ne1} under the assumption that $c_P=1$ for each $P$. Indeed, the case of rational $c_P$ is then  immediate, while the case of arbitrary $c_P\in\C$ follows by density arguments. Let $N_j$ be the number of plates in $\P_j$. We need to prove that
$$\int_{B_{4R}}(\prod_{j=1}^m\sum_{P\in\P_j}1_P)^{\frac{n}{mk}}\lesssim R^{n/2}\prod_{j=1}^mN_j^{\frac{n}{mk}}.$$

For each $P\in\P_j$, there exists $v_P\in V_j$ such that
$$P\subset\{x\in\R^n:\;\pi_j(x)\in Q_P\},$$
where $Q_P$ is a cube in $V_j$ centered at $v_P$, with side length $R^{1/2}$.
Apply Theorem \ref{BCCT1} to the functions
$$g_j=(\sum_{P\in\P_j}1_{Q_P})^{1/2}.$$
It suffices to note that $$g_j\circ\pi_j(x)\ge (\sum_{P\in\P_j}1_P)^{1/2}(x),\;\;x\in\R^n$$
and that $$\|g_j\|_{L^2(V_j)}\sim N_j^{\frac12}R^{k/4}.$$
\end{proof}
\bigskip

For $1\le k\le n-1$, we will consider the following $k-$dimensional linear spaces  in $\R^n$
$$V_k(t)=\langle \Phi'(t),\Phi''(t),\ldots,\Phi^{(k)}(t)\rangle,\;\; t\in[0,1].$$
Recall that $\Phi$ gives the parametric representation \eqref{ryvbt78y783956t787034r894} of the curve $\Gamma$.
\begin{lemma}
\label{nl1}
Let $M_n=n!$. Then for each $M_n$ pairwise distinct  points $t_j$ in $[0,1]$, the spaces $(V_k(t_j))_{j=1}^{M_n}$ satisfy the non-degeneracy condition \eqref{fek3} with $m=M_n$.
\end{lemma}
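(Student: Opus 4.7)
My plan is to convert the transversality condition \eqref{fek3} into a statement about polynomial vanishing orders at the $t_j$, and then to control the total vanishing by the classical Wronskian/Pl\"ucker bound.

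First I would set $P_v(t):=v_1t+v_2t^2+\ldots+v_nt^n$ for $v\in\R^n$, so that $v\cdot\Phi^{(i)}(t)=P_v^{(i)}(t)$ for $i\ge 1$. Then
$$V_k(t)^\perp=\{v\in\R^n:P_v^{(i)}(t)=0,\;i=1,\ldots,k\}=\{v:\mathrm{ord}_t(P_v')\ge k\},$$
and the injective map $v\mapsto P_v'$ identifies a subspace $V\subset\R^n$ with a $d$-dimensional subspace $\mathcal{Q}_V\subset\C[t]_{\le n-1}$, where $d=\dim V$. Writing $\mathcal{Q}_V^{(j,k)}:=\{Q\in\mathcal{Q}_V:\mathrm{ord}_{t_j}(Q)\ge k\}$, we have $\dim\pi_j(V)=d-\dim\mathcal{Q}_V^{(j,k)}$, so \eqref{fek3} with $m=M_n$ becomes the polynomial inequality
$$\sum_{j=1}^{M_n}\dim\mathcal{Q}_V^{(j,k)}\;\le\;\frac{(n-k)M_n\,d}{n}=(n-k)(n-1)!\,d.$$

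To attack this, I would introduce at each $t_j$ the order sequence $0\le o_1^{(j)}<\cdots<o_d^{(j)}\le n-1$ of $\mathcal{Q}_V$, so that a flag-basis argument gives $\dim\mathcal{Q}_V^{(j,k)}=\#\{i:o_i^{(j)}\ge k\}$. Writing $a_i^{(j)}:=o_i^{(j)}-(i-1)\ge 0$ for the excess over the generic sequence, the indices $i>k$ automatically satisfy $o_i^{(j)}\ge i-1\ge k$ and contribute $(d-k)^+$, while an index $i\le\min(k,d)$ contributes only when $a_i^{(j)}\ge 1$; thus
$$\sum_{j=1}^{M_n}\dim\mathcal{Q}_V^{(j,k)}\;\le\;M_n(d-k)^+ +\sum_{j,i}a_i^{(j)}.$$
The decisive ingredient is then the Pl\"ucker-type bound $\sum_{j,i}a_i^{(j)}\le d(n-d)$. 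To prove it I would pick a basis of $\mathcal{Q}_V$ with strictly increasing degrees $m_1<\cdots<m_d\le n-1$ (by filtering $\mathcal{Q}_V$ by degree), form the Wronskian $W(t)=\det(Q_i^{(j-1)}(t))_{i,j=1}^d$, and exploit two facts: $W$ is a nonzero polynomial of degree $\sum m_i-\binom{d}{2}\le d(n-d)$, and its order of vanishing at each $t_0$ equals the Weierstrass weight $\sum_i a_i^{(t_0)}$.

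Combining the two displays yields $\sum_j\dim\mathcal{Q}_V^{(j,k)}\le M_n(d-k)^+ +d(n-d)$, and an elementary case analysis that uses $M_n=n!$, $d\le n$ and $k\le n-1$ will confirm this is bounded by $(n-k)(n-1)!\,d$, with equality precisely when $V=\R^n$ (which is also where \eqref{fek3} itself is tight). The hard part will be verifying the two Wronskian identities: one has to show that the leading coefficient of $W$ does not vanish when the $m_i$ are distinct (so that $\deg W=\sum m_i-\binom{d}{2}$) and that the local orders of $W$ are indeed the Weierstrass weights. Both are classical but require a careful flag-basis computation; once they are in hand, the rest is bookkeeping.
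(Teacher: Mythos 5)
Your proposal is correct, and it reaches the conclusion by a genuinely different and in fact sharper route than the paper. The paper argues directly on the curve side: it projects $\Gamma$ orthogonally onto $V$ to obtain a polynomial curve $\gamma_1$, observes that any drop $\dim\pi_j(V)<\min(\dim V,k)$ forces the wedge $\gamma_1^{(1)}(t_j)\wedge\cdots\wedge\gamma_1^{(\dim V)}(t_j)$ to vanish, and then uses the bounded degree of this wedge polynomial (together with a Wronskian argument to rule out identical vanishing) to conclude that at most $(n-1)!$ of the $n!$ points can be degenerate; the surviving fraction $\frac{n!-(n-1)!}{n!}=\frac{n-1}{n}$ combined with $k\le n-1$ closes the estimate. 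This is a crude count in which every bad point is discarded entirely. You instead dualize to the $d$-dimensional space $\mathcal{Q}_V$ of derivative polynomials, record the exact dimension drop at each $t_j$ through the order sequence, and control the total drop by the Pl\"ucker bound $\sum_{j,i}a_i^{(j)}\le\deg W\le d(n-d)$. Both proofs ultimately rest on the same engine — the Wronskian of linearly independent polynomials is a not-identically-zero polynomial of controlled degree — but your version is quantitative where the paper's is qualitative: it bounds the \emph{sum} of the rank drops rather than merely the \emph{number} of points at which any drop occurs. The payoff is that your argument would establish transversality with $M_n$ only polynomial in $n$ (the bounds $M_n(d-k)^++d(n-d)\le(n-k)(n-1)!d$ have enormous slack), consistent with the paper's remark that the precise value of $M_n$ is irrelevant, whereas the paper's count genuinely uses $M_n=n!$. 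Your closing bookkeeping does check out (for $d\le k$ one needs $n-d\le(n-k)(n-1)!$, and for $d>k$ the surplus is $(n-d)\bigl(k(n-1)!-d\bigr)\ge0$); the only quibble is the parenthetical claim that equality occurs \emph{precisely} at $V=\R^n$, which fails for small $n$ (e.g.\ $n=3$, $k=2$, $d=1$), but nothing depends on it. The two classical Wronskian facts you flag — that $\deg W\le\sum_i m_i-\binom{d}{2}$ with $W\not\equiv0$ for linearly independent polynomials, and that $\mathrm{ord}_{t_0}W\ge\sum_i a_i^{(t_0)}$ — are indeed standard and are all you need (the lower bound on the local order suffices; exactness is not required).
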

\begin{proof}
Let $\pi_j$ be the orthogonal projection associated with $V_k(t_j)$.
Fix a linear space $V$ in $\R^n$, and denote by $\gamma_1$ the orthogonal projection of $\Gamma$ onto $V$. Then
$$\gamma_1(t)=(P_1(t),\ldots,P_n(t)),$$
for some polynomials $P_j$ of degree at most $n$. Write
$$\Phi(t)=\gamma_1(t)+\gamma_2(t)$$
and note that for each $t$ $$\gamma_2^{(i)}(t)\perp V,\;\;i\ge 0.$$
Inequality \eqref{fek3} is trivial when $\dim(V)=n$, so we may assume that $\dim(V)\le n-1$.

Let us start with the case $\dim(V)\le k$. We will show that
$$\dim(\pi_j(V))<\dim(V)$$
can only happen for at most $(n-1)!$ values of $j$.

Pick $j$ so that $\dim(\pi_j(V))<\dim(V)$. Then there must exist $v_j\in V$ orthogonal to $V_k(t_j)$. It follows that $v_j$ is orthogonal to the vectors $\gamma_1^{(i)}(t_j)$, $1\le i\le k$. Since $\gamma_1^{(i)}(t_j)\in V$, we infer that
$$\gamma_1^{(1)}(t_j)\wedge\ldots\wedge \gamma_1^{(\dim(V))}(t_j)=0.$$
Assume for contradiction that this held true for at least $(n-1)!+1$ points $t_j$. By the Fundamental Theorem of Algebra, this would force
$$\gamma_1^{(1)}(t)\wedge\ldots\wedge \gamma_1^{(\dim(V))}(t)\equiv 0.$$
Using a Wronskian argument, it further follows that the whole curve $\gamma_1$ lives in a linear subspace of $V$ with dimension $\dim(V)-1$. This leads to the contradiction that $\Gamma$ lives in an $n-1$ dimensional subspace of $\R^n$. It suffices now to note that
$$\frac{n}{kM_n}\sum_{j=1}^{M_n}{\dim(\pi_j(V))}\ge \dim(V)\frac{n}{n-1}\frac{n!-(n-1)!}{n!}=\dim(V).$$
\medskip

When $k\le \dim(V)\le n-1$, the argument above shows that
$$\dim(\pi_j(V))=k$$
for all but at most $(n-1)!$ values of $j$. We can write now
$$\frac{n}{kM_n}\sum_{j=1}^{M_n}{\dim(\pi_j(V))}\ge n\frac{n!-(n-1)!}{n!}=n-1\ge \dim(V).$$

\end{proof}
\bigskip
We will need the following uniform version of Proposition \ref{nc1}.
\begin{proposition}
\label{np2}
For each $K\ge M_n$ we have
$$\sup_{(t_1,\ldots,t_{M_n})\in S_K}C(V_k(t_1),\ldots,V_k(t_{M_n}))<\infty,$$
where
$$S_K=\{(t_1,\ldots,t_{M_n})\in[0,1]^{M_n}:\;|t_i-t_j|\ge \frac1K,\text{ for all }1\le i\not=j\le M_n\}.$$
\end{proposition}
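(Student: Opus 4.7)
The plan is to combine Lemma \ref{nl1} with a compactness argument. By Lemma \ref{nl1}, for every $(t_1,\ldots,t_{M_n})\in S_K$ (in particular, for every tuple with pairwise distinct entries) the transversality condition \eqref{fek3} is satisfied for the subspaces $V_k(t_1),\ldots,V_k(t_{M_n})$; therefore Theorem \ref{BCCT1} and the reasoning in Proposition \ref{nc1} already guarantee that $C(V_k(t_1),\ldots,V_k(t_{M_n}))$ is finite pointwise on $S_K$. The goal is only to promote this to a uniform bound.

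First I would note that $S_K$ is a closed subset of $[0,1]^{M_n}$, hence compact, and that the map
$$[0,1]\ni t\mapsto V_k(t)=\langle\Phi'(t),\ldots,\Phi^{(k)}(t)\rangle\in\mathrm{Gr}(k,n)$$
is smooth (the generating vectors are polynomial in $t$ and remain linearly independent on $[0,1]$ by a Vandermonde computation). Consequently the map $(t_1,\ldots,t_{M_n})\mapsto (V_k(t_1),\ldots,V_k(t_{M_n}))$ from $S_K$ to $\mathrm{Gr}(k,n)^{M_n}$ is continuous, and its image is a compact subset of the (open) locus where \eqref{fek3} holds.

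Next I would invoke upper semi-continuity of the Brascamp--Lieb/BCCT constant as a function of the subspaces on the transversality locus. Combined with pointwise finiteness and compactness of $S_K$, this yields the sought uniform bound. The main obstacle is establishing this upper semi-continuity; the cleanest route is to quote the known continuity results for Brascamp--Lieb constants from \cite{BCCT}, which state that $C_{\mathrm{BCCT}}(V_1,\ldots,V_m)$ is continuous on the open region of non-degenerate datums.

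If one prefers to avoid quoting that continuity result, an equivalent elementary route is to revisit the proof of Proposition \ref{nc1}: there the BCCT bound is applied to $g_j=(\sum_{P\in\P_j}1_{Q_P})^{1/2}$ on $V_k(t_j)$, and the BCCT constant is controlled quantitatively by the inverses of the Gram determinants $\det\bigl(\Phi^{(i)}(t_j)\cdot\Phi^{(i')}(t_{j'})\bigr)$ associated with any $n$-tuple of vectors drawn from $\bigcup_j\{\Phi^{(i)}(t_j):1\le i\le k\}$. These determinants are polynomials in $(t_1,\ldots,t_{M_n})$ that, by the Wronskian/alternant argument already used in Lemma \ref{nl1}, vanish only when some $t_i=t_j$. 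Hence on the compact set $S_K$ they are bounded away from zero by a positive constant depending only on $K$ and $n$, which produces the desired uniform bound.
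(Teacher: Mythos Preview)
Your main argument is exactly the paper's: reduce $C(V_k(t_1),\ldots,V_k(t_{M_n}))$ to the Brascamp--Lieb constant $H(t_1,\ldots,t_{M_n})$, use Lemma~\ref{nl1} with Theorem~\ref{BCCT1} for pointwise finiteness, invoke a continuity/local-boundedness result for $H$, and conclude by compactness of $S_K$. One correction: the continuity of the Brascamp--Lieb constant is \emph{not} in \cite{BCCT}. That paper gives the finiteness characterization (your Theorem~\ref{BCCT1}) but says nothing about how the constant varies with the data. The result you need --- if $H$ is finite at a point then it is bounded on a neighborhood --- is Theorem~1.1 of Bennett--Bez--Flock--Lee \cite{BBFL}, and that is precisely what the paper cites. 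This matters because lower semicontinuity of $H$ is automatic (it is a supremum of continuous functionals of the $g_j$), while the upper-semicontinuity/local-boundedness direction you actually need is a nontrivial theorem first proved in \cite{BBFL}.

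Your alternative ``elementary'' route does not work as written. There is no general formula bounding the Brascamp--Lieb constant by inverse Gram determinants of the sort you describe, and the claim that these determinants ``vanish only when some $t_i=t_j$'' is false: for $k<n$ one may pick an $n$-tuple of vectors from $\bigcup_j\{\Phi^{(i)}(t_j)\}$ lying entirely inside a single $V_k(t_1)$, and the corresponding determinant is zero regardless of the $t_j$. Even after restricting to ``generic'' $n$-tuples, turning such determinant bounds into a bound on the BL constant would require an argument of essentially the same depth as \cite{BBFL}. So drop this paragraph and simply cite \cite{BBFL} in place of \cite{BCCT}.
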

\begin{proof}
Let $\pi_{V_k(t)}$ be the orthogonal projection associated with $V_k(t)$.
The proof of Proposition \ref{nc1} shows that it  suffices to prove that
$$\sup_{(t_1,\ldots,t_{M_n})\in S_K}\sup_{g_j\in L^{2}({V_k(t_j)})}\frac{\|(\prod_{j=1}^{M_n}|g_j\circ \pi_{V_k(t_j)}|)^{\frac1{M_n}}\|_{L^{\frac{2n}{k}}(\R^n)}}{(\prod_{j=1}^{M_n}\|g_j\|_{L^{2}(V_k(t_j))})^{\frac1{M_n}}}<\infty.$$
The function
$$H(t_1,\ldots,t_{M_n})=\sup_{g_j\in L^{2}({V_k(t_j)})}\frac{\|(\prod_{j=1}^{M_n}|g_j\circ \pi_{V_k(t_j)}|)^{\frac1{M_n}}\|_{L^{\frac{2n}{k}}(\R^n)}}{(\prod_{j=1}^{M_n}\|g_j\|_{L^{2}(V_k(t_j))})^{\frac1{M_n}}}$$
is finite on $S_K$ due to Theorem \ref{BCCT1} and Lemma \ref{nl1}. Moreover, Theorem 1.1 in \cite{BBFL} proves that if $H(t_1,\ldots,t_{M_n})<\infty$ for some $(t_1,\ldots,t_{M_n})\in[0,1]^{M_n}$, then $$\sup H(t_1',\ldots,t_{M_n}')<\infty$$ on some neighborhood of $(t_1,\ldots,t_{M_n})$. It now suffices to note that $S_K$ is compact.

\end{proof}

We can now extend the result of Proposition \ref{nc1} to get a multilinear Kakeya-type inequality for plates. For some $K\ge 2M_n$, let $I_1,\ldots,I_{M_n}$ be intervals of length $\frac1K$ in $[0,1]$, which in addition are assumed to be separated by at least $\frac1K$.
\begin{theorem}
\label{nt3}
Consider $M_n$ families $\P_j$ consisting of plates $P$ in $\R^n$ having the following properties
\medskip

(i) for each $P\in\P_j$ there exists $t_P\in I_j$ such that $k$ of the axes of $P$ have side lengths equal to $R^{1/2}$ and span  $V_k(t_P)$, while the remaining $n-k$ axes have side lengths  equal to $R$ (we will refer to the orientation of such a $P$ as being $(V_k(t_P), V_k(t_P)^{\perp})$)
\medskip

(ii) all plates are subsets of a ball $B_{4R}$ of radius $4R$
\medskip

Then we have the following inequality
\begin{equation}
\label{ne3}
\nint_{B_{4R}}|\prod_{j=1}^{M_n}F_j|^{\frac1{2M_n}\frac{2n}{k}}\lesssim_{\epsilon, K}R^{\epsilon} \left[\prod_{j=1}^{M_n}|\nint_{B_{4R}}F_j|^{\frac1{2M_n}}\right]^{\frac{2n}k}
\end{equation}
for all functions  $F_j$ of the form
$$F_j=\sum_{P\in\P_j}c_P1_P.$$
The implicit constant will not depend on $R,c_P,\P_j$.
\end{theorem}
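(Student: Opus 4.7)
The plan is to deduce Theorem \ref{nt3} from the uniform single--orientation estimate of Proposition \ref{np2} by an induction on scales in the style of Bennett--Carbery--Tao \cite{BCT}. Let $\mathcal{K}(R)$ denote the best constant for which \eqref{ne3} holds at scale $R$, the supremum being taken over all admissible families $(\P_j)_{j=1}^{M_n}$ and all interval tuples $(I_j)$ obeying the separation hypothesis. A preliminary comparison (containing each plate in a plate of fixed orientation with an $O(1)$ fattening) combined with Proposition \ref{np2} already yields $\mathcal{K}(R)<\infty$ for each fixed $R$; the content of the theorem is the quantitative bound $\mathcal{K}(R)\lesssim_\epsilon R^\epsilon$.

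The single inductive step will proceed as follows. Fix a large integer $N=N_\epsilon$ and partition each interval $I_j$ into $N$ sub--intervals $I_j^{(s)}$ of length $(NK)^{-1}$; after discarding alternate sub--intervals, distinct sub--intervals across all families remain at least $(2NK)^{-1}$--separated. Split $\P_j=\bigsqcup_s\P_j^{(s)}$ according to which sub--interval contains the center parameter $t_P$, and write $F_j=\sum_s F_j^{(s)}$. Expanding the product $\prod_j F_j$ and applying H\"older, the left--hand side of \eqref{ne3} will be controlled, up to a factor $N^{O(1)}$, by a single ``typical'' tuple $(s_1,\dots,s_{M_n})$. Within each sub--interval, the plate orientations $V_k(t_P)$ vary only over an arc of length $\sim 1/N$.

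Next, cover $B_{4R}$ by boundedly overlapping sub--balls $B_{4R'}$ with $R':=R/N^2$. On each such sub--ball, the restriction of a plate from $\P_j^{(s_j)}$ is, up to an $O(1)$ fattening, a plate at scale $R'$ with exact orientation $V_k(t_j^*)$ for the midpoint $t_j^*$ of $I_j^{(s_j)}$; indeed, the angular spread $O(1/N)$ matches the natural uncertainty $(R/R')^{-1/2}$ of plates at the smaller scale. The inductive hypothesis $\mathcal{K}(R')$ then applies to these reoriented plates. Averaging the resulting local estimates over the cover and summing over the $O(N^{M_n})$ tuples yields a recursion of the form
$$\mathcal{K}(R)\le C_{\epsilon,N,K}\,\mathcal{K}(R/N^2).$$
Iterating $\sim \log R/\log N^2$ times, and choosing $N$ large enough that $\log C_{\epsilon,N,K}/\log N^2<\epsilon$, we conclude $\mathcal{K}(R)\lesssim_\epsilon R^\epsilon$.

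The main obstacle is the geometric step above: one must carefully verify that a plate at scale $R$ whose orientation $V_k(t_P)$ is within angle $O(1/N)$ of a reference $V_k(t_j^*)$, when restricted to a sub--ball of radius $R'=R/N^2$, is contained in an $O(1)$ fattening of a plate at scale $R'$ with exact orientation $V_k(t_j^*)$. This reduces to controlling the modulus of continuity of $t\mapsto V_k(t)$, which is smooth and non--degenerate along $\Gamma$ by the same Wronskian considerations used in the proof of Lemma \ref{nl1}. A secondary technical point is to track the geometric constants so that the per--step loss is subpolynomial in the scale ratio, which is exactly what Proposition \ref{np2} supplies through the uniformity on $S_K$.
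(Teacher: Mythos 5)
Your overall strategy (induction on scales driven by the fixed--orientation Brascamp--Lieb input of Proposition \ref{np2}) is the right one in spirit --- it is essentially the Bennett--Carbery--Tao/Guth scheme that the paper invokes by citing \cite{Gu} rather than writing out --- but the step you yourself flag as ``the main obstacle'' is in fact false, and a second half of the induction is missing. For the geometry: a box with $k$ sides of length $w$ spanning $V$ and $n-k$ sides of length $L$ spanning $V^{\perp}$ is contained in a bounded dilate of the same box with $V$ replaced by $V'$ only when the angle between $V$ and $V'$ is $O(w/L)$. After restricting a scale--$R$ plate to a sub--ball of radius $R'=R/N^2$ (and subdividing its short directions into $\sim N^k$ pieces of length $(R')^{1/2}=R^{1/2}/N$, which is needed since $R^{1/2}\neq (R')^{1/2}$), you have $w=(R')^{1/2}$ and $L=R'$, so the admissible tilt is $(R')^{-1/2}=N R^{-1/2}$ --- a quantity that decays with $R$ --- whereas your angular spread is $\sim 1/N$. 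These are compatible only for $R\lesssim N^4$; for larger $R$ the tilted long directions sweep across $\sim (R')^{1/2}/N\to\infty$ parallel scale--$R'$ plates and no $O(1)$ fattening exists. Your heuristic identifies the angular tolerance with the scale ratio $(R/R')^{-1/2}=1/N$, but the correct tolerance is the eccentricity $(R')^{-1/2}$ of the plates themselves. The same error undermines your preliminary claim that $\mathcal{K}(R)<\infty$ follows by containing the variable--orientation families in $O(1)$ fattenings of fixed--orientation ones (there the tilt is $\sim 1/K$ against a tolerance of $R^{-1/2}$). More structurally, no choice of sub--interval length $\tau$ and sub--ball radius $\rho$ works: making the reoriented restrictions into genuine scale--$\rho$ plates forces $\tau\rho\lesssim\rho^{1/2}$ and $\rho^{1/2}\gtrsim R^{1/2}$ simultaneously, i.e.\ $\rho\gtrsim R$. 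The class ``plates of width $\sqrt{\text{ball radius}}$'' is not self--similar under zooming, which is exactly why \cite{Gu} fixes the plate width and inducts on the length instead.

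Independently of the geometry, the passage ``averaging the resulting local estimates over the cover \ldots\ yields a recursion'' hides the harder half of the argument. Summing the local bounds produces
$\mathrm{Avg}_{B_{4R'}}\prod_{j}\bigl(\nint_{B_{4R'}}F_j\bigr)^{\frac{n}{kM_n}}$,
and since the total exponent is $\frac{n}{k}>1$, this is \emph{not} dominated by $\prod_j\bigl(\nint_{B_{4R}}F_j\bigr)^{\frac{n}{kM_n}}$ via H\"older or Jensen; it is itself a multilinear Kakeya--type quantity for the plates fattened to the sub--ball scale, and controlling it requires a second application of an inductive hypothesis at the coarse scale. This is precisely the source of the second factor in the Bennett--Carbery--Tao recursion $\mathcal{K}(R)\le C\,\mathcal{K}(R^{1/2})^2$ (see \cite{BCT}, \cite{Gu}); your proposed recursion $\mathcal{K}(R)\le C_{\epsilon,N,K}\,\mathcal{K}(R/N^2)$ is therefore not established even granting the local step. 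To repair the argument you would need to reformulate the statement for plates of width $W$ and length $L$ with angular spread $O(W/L)$ within each family, run the two--tier induction of \cite{Gu} in that class, and only at the end specialize to $W=R^{1/2}$, $L=R$; as written, the proof does not go through.
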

\begin{proof}
There is a rather short proof of the case $k=n-1$ in \cite{Gu}, that easily extends to the other values of $k$. See also \cite{BD6} for a further discussion.
\end{proof}

\bigskip

Let  $\rho<\frac1K$. We close this section by proving the key new inequality in this paper, that shows how to pass from balls of smaller radius to balls of larger radius. Note that \eqref{hdgfhgdsfdshfjhdkghfdgkjfkjhgkl} is not a decoupling inequality, as the intervals $J_i$ remain unchanged from left to right. However, the ball on the right is larger, which will allow for  subsequent decouplings into smaller intervals (via both $L^2$ and lower dimensional decouplings). The general underlying principle is that the larger the ball, the smaller the decoupling intervals are.

\begin{theorem}
\label{nt4}
Fix $1\le k\le n-1$ and $p\ge 2n$.
Let $B$ be an arbitrary ball in $\R^n$ with radius $\rho^{-(k+1)}$, and let $\B$ be a finitely overlapping cover of $B$ with balls $\Delta$ of radius $\rho^{-k}$. Then for each $g:[0,1]\to \C$ we have
\begin{equation}
\label{hdgfhgdsfdshfjhdkghfdgkjfkjhgkl}
\frac1{|\B|}\sum_{\Delta\in\B}\left[\prod_{i=1}^{M_n}(\sum_{J_i\subset I_i\atop{|J_i|=\rho}}\|E_{J_i}g\|_{L_{\sharp}^{\frac{pk}{n}}(w_\Delta)}^2)^{1/2}\right]^{p/{M_n}}\lesssim_{\epsilon,K}\rho^{-\epsilon} \left[\prod_{i=1}^{M_n}(\sum_{J_i\subset I_i\atop{|J_i|=\rho}}\|E_{J_i}g\|_{L_{\sharp}^{\frac{pk}{n}}(w_B)}^2)^{1/2}\right]^{p/M_n}.
\end{equation}
Moreover, the implicit constant is independent of $g,\rho,B$.
\end{theorem}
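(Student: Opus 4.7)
The plan is to combine a locally constant principle for $|E_{J_i}g|^{pk/n}$ on appropriate plates with the multilinear Kakeya-type inequality of Theorem \ref{nt3}. For each $J_i\subset I_i$ of length $\rho$, Fourier-support considerations place $|E_{J_i}g|^{pk/n}$ essentially inside a box of dimensions $\rho\times\rho^2\times\cdots\times\rho^n$ in the Frenet frame at $t_{J_i}$, so by the uncertainty principle it is essentially constant on dual plates with dimensions $R^{1/2}=\rho^{-(k+1)/2}$ along the $k$ directions of $V_k(t_{J_i})$ and $R=\rho^{-(k+1)}$ along the $n-k$ orthogonal directions. The coarsening from the natural wave packets (dimensions $\rho^{-1},\ldots,\rho^{-n}$) to plates of these dimensions is legitimate precisely because $\rho^{-k}\le\rho^{-(k+1)/2}$ whenever $k\ge 1$, so that every fine wave packet fits (after truncation to $B$) inside such a plate. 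I would record this as a plate-constant majorant $H_{i,J_i}\gtrsim|E_{J_i}g|^{pk/n}$ on $B$, together with a reverse control $\nint_\Delta H_{i,J_i}\lesssim \|E_{J_i}g\|_{L^{pk/n}_\sharp(w_\Delta)}^{pk/n}$ valid for every ball $\Delta$ of radius at least $\rho^{-(k+1)/2}$.

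Since $\Delta$ has radius $\rho^{-k}\le\rho^{-(k+1)/2}$, it fits (up to boundary effects absorbed into $\rho^{-\epsilon}$) inside a single plate of each single-$J_i$ family, so the $L^{pk/n}_\sharp(w_\Delta)$-norm of $E_{J_i}g$ is comparable to the constant value of $H_{i,J_i}^{n/(pk)}$ on that plate. Converting the discrete average over $\Delta\in\B$ into a continuous integral over $x\in B$, the left side of Theorem \ref{nt4} becomes
$$\lesssim \nint_B \prod_{i=1}^{M_n}F_i(x)^{p/(2M_n)}\,dx,\qquad F_i(x):=\sum_{J_i\subset I_i,\,|J_i|=\rho} H_{i,J_i}(x)^{2n/(pk)}.$$
Each $F_i$ is a sum of plate-indicator functions with plates drawn from a family $\P_i$ whose members have orientation $V_k(t_P)$ for some $t_P\in I_i$, exactly the configuration of Theorem \ref{nt3}. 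An analogous (and easier) computation at the scale of $B$ shows that the right side of Theorem \ref{nt4} is bounded below by a constant times $\prod_i (\nint_B F_i)^{p/(2M_n)}$. This reduces the claim to the Kakeya-type bound
$$\nint_B \prod_{i=1}^{M_n} F_i^{p/(2M_n)} \lesssim_{\epsilon,K} \rho^{-\epsilon}\prod_{i=1}^{M_n}\Bigl(\nint_B F_i\Bigr)^{p/(2M_n)}.$$

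The transversality hypothesis of Theorem \ref{nt3} is provided by Lemma \ref{nl1} together with its uniform version Proposition \ref{np2}, using the $1/K$-separation of the intervals $I_i$. The main obstacle I anticipate is the exponent matching: Theorem \ref{nt3} supplies the Kakeya bound only at exponent $n/(kM_n)$ on each $F_i$, while the target exponent $p/(2M_n)$ is strictly larger as soon as $p>2n/k$, and our hypothesis $p\ge 2n\ge 2n/k$ forces us into this regime. The deliberate choice $q=pk/n$ in the statement of Theorem \ref{nt4} is precisely what makes the plate dimensions produced by the locally constant step match the Kakeya plates of Theorem \ref{nt3}, and this numerical alignment is what should allow one to close the gap: my plan is to apply Theorem \ref{nt3} to suitably rescaled and normalized versions of the plate functions $H_{i,J_i}^{2n/(pk)}$, and then to convert the resulting ball averages back to $(\nint_B F_i)^{p/(2M_n)}$ by exploiting the sum-over-$J_i$ structure of $F_i$ together with H\"older's inequality applied carefully on the $J_i$-index. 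I expect the correct exponent bookkeeping, driven by the relation $q=pk/n$ and the transversality provided by Lemma \ref{nl1}, to be the central technical point of the argument.
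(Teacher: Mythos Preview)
Your overall architecture is right: make $|E_{J_i}g|^{q}$ with $q=pk/n$ essentially constant on plates with orientation $(V_k(t_{J_i}),V_k(t_{J_i})^\perp)$, and feed the resulting plate functions into the multilinear Kakeya inequality of Theorem~\ref{nt3}. You have also correctly identified the crux of the matter: Theorem~\ref{nt3} controls $\nint \prod_i F_i^{n/(kM_n)}$, whereas your reduction lands you at exponent $p/(2M_n)$, and these differ as soon as $p>2n/k$.

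The gap is in how you propose to close that exponent mismatch. ``H\"older on the $J_i$-index'' alone cannot do it. Concretely, H\"older gives
\[
\Bigl(\sum_{J_i}\|E_{J_i}g\|_{L^q_\sharp(w_\Delta)}^2\Bigr)^{1/2}\le N_i^{\frac12-\frac{n}{pk}}\Bigl(\sum_{J_i}\|E_{J_i}g\|_{L^q_\sharp(w_\Delta)}^{q}\Bigr)^{1/q},
\]
which brings the exponent down to the Kakeya value $n/(kM_n)$ at the cost of the factor $(\prod_i N_i^{1/2-n/(pk)})^{p/M_n}$. After Kakeya you end up with $\prod_i(\sum_{J_i}\|E_{J_i}g\|_{L^q_\sharp(w_B)}^{q})^{n/(kM_n)}$, and the embedding $\ell^2\hookrightarrow\ell^q$ lets you replace this by $\prod_i(\sum_{J_i}\|E_{J_i}g\|_{L^q_\sharp(w_B)}^{2})^{p/(2M_n)}$---but the prefactor $(\prod_i N_i^{1/2-n/(pk)})^{p/M_n}$ is still there and is unacceptably large. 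The missing idea, which the paper supplies, is a \emph{dyadic pigeonholing} in the values $\|E_{J_i}g\|_{L^q_\sharp(w_B)}$: after restricting to intervals $J_i$ for which these norms are pairwise comparable (losing only $\log\rho^{-1}$), the $\ell^q$ and $\ell^2$ sums on the right differ by exactly the factor needed to cancel $N_i^{1/2-n/(pk)}$. Without this comparability reduction the exponent gap cannot be closed.

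A secondary point: your plate geometry is off. You want the short sides to have length $\rho^{-(k+1)/2}$, asserting ``$\rho^{-k}\le\rho^{-(k+1)/2}$ whenever $k\ge1$''; in fact $\rho^{-k}\ge\rho^{-(k+1)/2}$ for $k\ge1$, with equality only at $k=1$. The paper instead uses tiles with short sides of length $\rho^{-k}$ (matching the radius of $\Delta$) and long sides $\rho^{-(k+1)}$, and proves the locally constant property \eqref{FJbound} directly via a Taylor expansion of the phase in the long directions. This is what makes the sup over $y\in 2T_J(x)$ harmless and gives $\|F_J\|_{L^q_\sharp(4B)}\lesssim\|E_Jg\|_{L^q_\sharp(w_B)}$.
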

\begin{proof}
Since we can afford logarithmic losses in $\rho$, it suffices to prove the inequality with the summation on both sides restricted to  families of $J_i$ for which  $\|E_{J_i}g\|_{L_{\sharp}^{\frac{pk}{n}}(w_B)}$ have comparable size (up to a factor of 2), for each $i$. Indeed, the $J_{i'}$ satisfying
$$\|E_{J_{i'}}g\|_{L_{\sharp}^{\frac{pk}{n}}(w_B)}\le \rho^C\max_{i}\|E_{J_{i}}g\|_{L_{\sharp}^{\frac{pk}{n}}(w_B)}$$
can be easily dealt with by using the triangle inequality, since we automatically have
$$\max_{\Delta\in \B}\|E_{J_{i'}}g\|_{L_{\sharp}^{\frac{pk}{n}}(w_\Delta)}\le \rho^C\max_{i}\|E_{J_{i}}g\|_{L_{\sharp}^{\frac{pk}{n}}(w_B)}.$$
This leaves only $\log_2 (\rho^{-O(1)})$ sizes to consider.
\medskip

Let us now assume that we have $N_i$ intervals $J_i$, with $\|E_{J_{i}}g\|_{L_{\sharp}^{\frac{pk}{n}}(w_B)}$ of comparable size. Since $p\ge 2n$, by H\"older's inequality the left hand side  is  at most
\begin{equation}
\label{ne4}
(\prod_{i=1}^{M_n}N_i^{\frac12-\frac{n}{pk}})^{p/{M_n}}\frac1{|\B|}\sum_{\Delta\in\B}\left[\prod_{i=1}^{M_n}(\sum_{J_i}\|E_{J_i}g\|_{L_{\sharp}^{\frac{pk}{n}}(w_\Delta)}^{\frac{pk}{n}})\right]^{\frac{n}{k M_n}}.
\end{equation}

\medskip

Fix $B$,  a ball of radius $\rho^{-k-1}$, and a cover $\B$ by balls $\Delta$.  For each interval $J=J_i$ of the form $[t_J-\rho/2,t_J+\rho/2]$ we cover $\cup_{\Delta\in\B}\Delta$ with a family $\F_J$ of  tiles $ T_J$ with orientation $(V_k(t_J), V_k(t_J)^{\perp})$,  $k$ short sides of length $\rho^{-k}$ and $n-k$ longer sides of length $\rho^{-k-1}$. Moreover, we can assume these tiles to be inside the ball $4B$. We let $ T_J(x)$ be the  tile containing $x$, and we let $2  T_J$ be the dilation of $ T_J$ by a factor of 2 around its center.

Let us use $q$ to abbreviate $pk/n$, and $\alpha$ to abbreviate $\frac{n}{k M_n}$.     Our goal is to control the expression

$$ \Avg_{\Delta\in\B} \prod_i \left( \sum_{J_i} \| E_{J_i} g \|_{L^q_{\sharp}(w_\Delta)}^q \right)^\alpha. $$

We now define $F_J$ for $x\in \cup_{ T_J\in\F_J} T_J$ by

$$ F_J(x) := \sup_{y \in 2  T_J(x)} \| E_J g \|_{L^q_{\sharp}(w_{B(y, \rho^{-k})})}. $$

For any point $x \in \Delta$ we have $ \Delta \subset 2  T_J(x)$, and so we also have

$$ \| E_J g \|_{L^q_{\sharp}(w_\Delta)} \le F_J (x). $$

Therefore,

$$ \Avg_{\Delta \in \B} \prod_i \left( \sum_{J_i} \| E_{J_i} g \|_{L^q_{\sharp}(w_\Delta)}^q \right)^\alpha \lesssim \nint_{4B} \prod_i (\sum_{J_i} F_{J_i}^q )^\alpha. $$

Moreover, the function $F_J$ is constant on each tile $ T_J\in\F_J$.  Applying Theorem 6.5 (note that $F_J$ satisfy a stronger property than what is needed), we get the bound

$$ \nint_{4B} \prod_{i} (\sum_{J_i} F_{J_i}^q )^\alpha \lesssim_{\epsilon,K} \rho^{-\epsilon} \prod_i \left( \sum_{J_i} \nint_{4B} F_{J_i}^q \right)^\alpha. $$

It remains to check that for each $J=J_i$

\begin{equation} \label{FJbound} \| F_J \|_{L^q_{\sharp}(4B)} \lesssim \| E_J g \|_{L^q_{\sharp}(w_B)}. \end{equation}
Once this is established, it follows  that \eqref{ne4} is dominated by
\begin{equation}
\label{ne5}
\rho^{-\epsilon}(\prod_{i=1}^{M_n}N_i^{\frac12-\frac{n}{pk}})^{p/{M_n}}\prod_{i=1}^{M_n}(\sum_{J_i}\|E_{J_i}g\|_{L_{\sharp}^{\frac{pk}{n}}(w_B)}^{\frac{pk}n})^{\frac{n}{kM_n}}.
\end{equation}
Recalling the restriction we have made on $J_i$, \eqref{ne5} is comparable to
$$\rho^{-\epsilon}\left[\prod_{i=1}^{M_n}(\sum_{J_i}\|E_{J_i}g\|_{L_{\sharp}^{\frac{pk}{n}}(w_B)}^2)^{1/2}\right]^{p/M_n},$$
as desired.
\medskip

To prove \eqref{FJbound}, we may assume $J=[-\rho/2,\rho/2]$ and thus $\widehat{E_Jg}$ will be supported in $\prod_{j=1}^n[-\rho^j,\rho^j]$. Fix $x=(x_1,\ldots,x_n)$ with $ T_J(x)\in\F_J$ and $y\in 2 T_J(x)$. Note that $T_J(x)$ has sides parallel to the coordinate axes. In particular, $y=x+y'$ with $|y_j'|<4\rho^{-k}$ for $1\le j\le k$ and $|y_j'|<4\rho^{-k-1}$, for $k+1\le j\le n$. Then
\begin{equation}
\label{h dfhivuhy uytugr9495m8t9349r-=349t-5=9t-0=}
\| E_J g \|_{L^q(w_{B(y, \rho^{-k})})}^q\lesssim
\end{equation}
$$\int|E_Jg(x_1+u_1,\ldots,x_k+u_k,x_{k+1}+u_{k+1}+y_{k+1}',\ldots,x_n+u_n+y_n')|^qw_{B(0, \rho^{-k})}(u)du.$$
Now, using Taylor expansion we get
$$|E_Jg(x_1+u_1,\ldots,x_k+u_k,x_{k+1}+u_{k+1}+y_{k+1}',\ldots,x_n+u_n+y_n')|$$
$$=|\int\widehat{E_Jg}(\lambda)e(\lambda\cdot(x+u))e(\lambda_{k+1}y_{k+1}')\ldots e(\lambda_{n}y_{n}')d\lambda|\le$$
$$\le \sum_{s_{k+1}=0}^{\infty}\frac{100^{s_{k+1}}}{s_{k+1}!}\ldots \sum_{s_{n}=0}^{\infty}\frac{100^{s_{n}}}{s_{n}!}|\int\widehat{E_Jg}(\lambda)e(\lambda\cdot(x+u))(\frac{\lambda_{k+1}}{2\rho^{k+1}})^{s_{k+1}}\ldots(\frac{\lambda_{n}}{2\rho^{{k+1}}})^{s_{n}}d\lambda|$$
$$=\sum_{s_{k+1}=0}^{\infty}\frac{100^{s_{k+1}}}{s_{k+1}!}\ldots \sum_{s_{n}=0}^{\infty}\frac{100^{s_{n}}}{s_{n}!} |M_{s_{k+1},\ldots,s_n}(E_Jg)(x+u)|.$$
Here $M_{s_{k+1},\ldots,s_n}$ is the operator with Fourier multiplier
$m_{s_{k+1},\ldots,s_n}(\frac{\lambda}{2\rho^{k+1}})$, where
$$m_{s_{k+1},\ldots,s_n}(\lambda)=({\lambda_{k+1}})^{s_{k+1}}\ldots({\lambda_{n}})^{s_{n}}1_{[-1/2,1/2]}(\lambda_{k+1})\ldots1_{[-1/2,1/2]}(\lambda_{n}).$$
We are able to insert the cutoff because of our initial restriction on the Fourier support of $E_Jg$.

Plugging this estimate into \eqref{h dfhivuhy uytugr9495m8t9349r-=349t-5=9t-0=} we obtain
$$\| E_J g \|_{L^q_{\sharp}(w_{B(y, \rho^{-k})})}^q\lesssim \sum_{s_{k+1}=0}^{\infty}\frac{100^{s_{k+1}}}{s_{k+1}!}\ldots \sum_{s_{n}=0}^{\infty}\frac{100^{s_{n}}}{s_{n}!}\|M_{s_{k+1},\ldots,s_n}(E_Jg)\|_{L_\sharp^q(w(x,\rho^{-k}))}^q.$$
Recalling the definition of $F_J$ and the fact that
$$\int_{4B}w_{B(x,\rho^{-k})}(z)dx\lesssim w_B(z),\;z\in\R^n$$
we conclude that
\begin{equation}
\label{hgefcgeryiuftrebyubfureywuyui}
\| F_J \|_{L^q_{\sharp}(4B)}^q\lesssim \sum_{s_{k+1}=0}^{\infty}\frac{100^{s_{k+1}}}{s_{k+1}!}\ldots \sum_{s_{n}=0}^{\infty}\frac{100^{s_{n}}}{s_{n}!}\|M_{s_{k+1,\ldots,s_n}}(E_Jg)\|_{L_\sharp^q(w_B)}^q.
\end{equation}

Note that $m_{s_{k+1},\ldots,s_n}$ admits a smooth extension to the whole $\R^n$ (and we will use the same notation for it) with derivatives of any given order uniformly bounded over $s_{k+1},\ldots,s_n$. It follows that
$$|\widehat{m}_{s_{k+1},\ldots,s_n}(x)|\lesssim\xi(x_{k+1},\ldots,x_n),$$
with implicit constants independent of $s_{k+1},\ldots,s_n$, where
$$\xi(x_{k+1},\ldots,x_n)\lesssim_M (1+(x_{k+1}^2+\ldots+x_n^2)^{1/2})^{-M},$$
for all $M>0$.
 We can now write
$$|M_{s_{k+1},\ldots,s_n}(E_Jg)(x)|\lesssim |E_Jg|\odot\xi_{\rho^{k+1}}$$
where $\odot$ denotes the convolution  with respect to the last $n-k$ variables $\bar x$, and
$$\xi_{\rho^{k+1}}(\bar x)=\rho^{(n-k)(k+1)}\xi(\rho^{k+1}\bar x).$$
Using this, one can easily check that
$$\|M_{s_{k+1,\ldots,s_n}}(E_Jg)\|_{L^q(w_B)}^q\lesssim \langle |E_Jg|^q\odot\xi_{\rho^{k+1}},w_B\rangle$$$$ =\langle |E_Jg|^q,\xi_{\rho^{k+1}}\odot w_B\rangle\lesssim \langle |E_Jg|^q,w_B\rangle.$$
Combining this with \eqref{hgefcgeryiuftrebyubfureywuyui} leads to the proof of \eqref{FJbound}
$$\| F_J \|_{L^q_{\sharp}(4B)}^q\lesssim \sum_{s_{k+1}=0}^{\infty}\frac{100^{s_{k+1}}}{s_{k+1}!}\ldots \sum_{s_{n}=0}^{\infty}\frac{100^{s_{n}}}{s_{n}!}\|E_Jg\|_{L_\sharp^q(w_B)}^q$$
$$\lesssim \|E_Jg\|_{L_\sharp^q(w_B)}^q.$$
The argument is now complete.

\end{proof}
\bigskip

For future use, we will record here the following easy analogue of Theorem \ref{nt4} for $k=n$. Let $B$ be an arbitrary ball in $\R^n$ with radius $R$, and let $\B$ be a finitely overlapping cover of $B$ with balls $\Delta$ of radius $r<\frac{R}{2}$. No other relation between $\rho,r,R$ is assumed.  Then for each $g:[0,1]\to \C$ and $p\ge 2$  we have
\begin{equation}
\label{j nbyrtug8rt7g856uy956y9}
\frac1{|\B|}\sum_{\Delta\in\B}\left[\prod_{i=1}^{M_n}(\sum_{J_i\subset I_i\atop{|J_i|=\rho}}\|E_{J_i}g\|_{L_{\sharp}^{p}(w_\Delta)}^2)^{1/2}\right]^{p/{M_n}}\lesssim\left[\prod_{i=1}^{M_n}(\sum_{J_i\subset I_i\atop{|J_i|=\rho}}\|E_{J_i}g\|_{L_{\sharp}^{p}(w_B)}^2)^{1/2}\right]^{p/M_n}.
\end{equation}
The proof relies on the inequality
$$\sum_{\Delta\in\B}w_\Delta\lesssim w_B,$$
on H\"older's inequality, and on the following consequence of Minkowski's inequality
$$\sum_{i}(\sum_k\|f_k\|_{L^p(w_i)}^2)^{\frac{p}2}\le (\sum_k\|f_k\|_{L^p(\sum_iw_i)}^2)^{\frac{p}2},$$
valid for arbitrary $f_k:\R^n\to\C$ and arbitrary positive functions $w_i:\R^n\to [0,\infty)$.

\bigskip

\section{A few basic tools}

Fix a positive Schwartz function $\eta:\R^n\to[0,\infty)$. For each ball $B=B(c,R)$ in $\R^n$, define
$$\eta_B(x)=\eta(\frac{x-c}{R}).$$
\begin{lemma}
\label{nl9}
Let $\W$ be the collection of all weights, that is, positive, integrable functions on $\R^n$. Fix a radius $R>0$.
Let $Q_1,Q_2:\W\to[0,\infty]$ have the following four properties:

(1) $Q_1(1_B)\lesssim Q_2(\eta_B)$ for all balls $B\subset \R^n$ of radius $R$

(2) $Q_1(u+v)\le Q_1(u)+Q_1(v)$, for each $u,v\in\W$

(3) $Q_2(u+v)\ge Q_2(u)+Q_2(v)$, for each $u,v\in\W$

(4) If $u\le v$ then $Q_i(u)\le Q_i(v)$.

Then $$Q_1(w_B)\lesssim Q_2(w_B)$$
for each ball $B$ with radius $R$. The implicit constant is independent of $R$.
\end{lemma}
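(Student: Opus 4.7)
The plan is to decompose $w_B$ pointwise into a dyadic sum of indicators of balls of radius $R$, apply hypothesis (1) on each, and recombine using the superadditivity (3) of $Q_2$.

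First I would write the pointwise bound
$$ w_B(x) \lesssim \sum_{k \ge 0} 2^{-100nk}\, 1_{A_k}(x) \le \sum_{k,B'} 2^{-100nk}\, 1_{B'}(x), $$
where $A_k = 2^kB\setminus 2^{k-1}B$ (with $A_0=B$) and $\mathcal{B}_k$ is a finitely-overlapping cover of $A_k$ by balls $B'$ of radius $R$ with $|\mathcal{B}_k|\lesssim 2^{kn}$. Applying (4) and (2) to $Q_1$, then (1) to each $B'$, and pulling the scalar $2^{-100nk}$ through $Q_i$ (using the $1$-homogeneity of these functionals; see below), one arrives at
$$ Q_1(w_B) \lesssim \sum_{k,B'} Q_2\bigl(2^{-100nk}\eta_{B'}\bigr) \stackrel{(3)}{\le} Q_2\!\left(\sum_{k,B'} 2^{-100nk}\eta_{B'}\right). $$

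The crux is then the pointwise comparison
$$\sum_{k\ge 0} 2^{-100nk}\sum_{B'\in\mathcal{B}_k}\eta_{B'}(x) \lesssim w_B(x). $$
For $x\in A_l$, the $k=l$ term contributes $O(1)\cdot 2^{-100nl}\sim w_B(x)$, using the finite-overlap bound $\sum_{B'\in\mathcal{B}_l}\eta_{B'}(x)=O(1)$; for $k\ne l$, the Schwartz decay of $\eta$, with decay order chosen larger than $100n$, makes contributions decay geometrically in $|k-l|$ and sum to a term comparable to $w_B(x)$. Applying (4) to $Q_2$ then yields $Q_2\bigl(\sum 2^{-100nk}\eta_{B'}\bigr)\lesssim Q_2(w_B)$, closing the chain.

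The hard part is legitimizing the $1$-homogeneity used when pulling the scalar $2^{-100nk}$ through $Q_i$: axioms (2)--(4) yield only $Q_1(nu)\le nQ_1(u)$ and $Q_2(u/n)\le Q_2(u)/n$ for integer $n$, not $Q_i(\lambda u)=\lambda Q_i(u)$ for $\lambda<1$. In the paper's applications, $Q_1,Q_2$ are concrete positively $1$-homogeneous $L^p$-type functionals, so this is automatic. To close the argument from the stated axioms alone, one multiplies through by $2^{100nK}$ for large $K$ and handles the tail $k>K$ by a direct monotone estimate, using the summability $\sum_k 2^{-99nk}<\infty$ of the bulk scales $k\le K$.
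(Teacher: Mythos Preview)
Your approach is essentially the paper's: dominate $w_B$ pointwise by a weighted sum of indicators of $R$-balls, push $Q_1$ through via (4), (2), (1), recombine via (3), and close with the reverse pointwise bound and (4). The only difference is organizational. The paper takes a single finitely-overlapping cover $\B$ of all of $\R^n$ by $R$-balls $B'$ and records the two pointwise inequalities
\[
w_B(x)\;\lesssim\;\sum_{B'\in\B} w_B(c_{B'})\,1_{B'}(x),
\qquad
\sum_{B'\in\B} w_B(c_{B'})\,\eta_{B'}(x)\;\lesssim\;w_B(x),
\]
both of which are immediate since $w_B$ varies by at most a fixed factor over any $R$-ball. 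Your dyadic-annulus decomposition amounts to sorting the $B'$ according to the size of $w_B(c_{B'})$; it is correct but adds a layer that the paper avoids.

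Your remark about homogeneity is well taken: pulling the scalar weights through $Q_1$ and $Q_2$ is not a consequence of (2)--(4) alone, and the paper's proof uses it just as silently as yours. In every application in the paper the functionals $Q_i$ are concrete $L^p$-type quantities that are positively homogeneous in the weight, so the step is harmless in context; but as a purely axiomatic statement the lemma tacitly assumes this extra property, and you are right to single it out.
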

\begin{proof}
Let $\B$ be a finitely overlapping cover of $\R^n$ with balls $B'=B'(c_{B'},R)$.
It suffices to note that
$$w_B(x)\lesssim \sum_{B'\in\B}1_{B'}(x)w_B(c_{B'})$$
and that
$$\sum_{B'\in\B}\eta_{B'}(x)w_B(c_{B'})\lesssim w_B(x).$$
\end{proof}
Here is one useful consequence of the lemma.
\begin{corollary}
For each $q\ge p\ge 1$ and each ball $B$ in $\R^n$ with radius at least 1 we have
\begin{equation}
\label{ nghbugtrt90g0-er9t-9}
\|E_{[0,1]}g\|_{L^q(w_B)}\lesssim \|E_{[0,1]}g\|_{L^p(w_B)},
\end{equation}
with the implicit constant independent of $B$ and $g$.
\end{corollary}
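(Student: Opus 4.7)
The plan is to apply Lemma \ref{nl9} with
$$Q_1(v) := \int |E_{[0,1]}g|^q\,v, \qquad Q_2(v) := \Big(\int |E_{[0,1]}g|^p\,v\Big)^{q/p},$$
taking the fixed scale $R$ in the lemma to be the radius of $B$ (so $R\ge 1$). Properties (2)--(4) are immediate: $Q_1$ is linear in $v$, hence sub-additive and monotone; $Q_2$ is monotone, and super-additive because $(a+b)^{q/p}\ge a^{q/p}+b^{q/p}$ whenever $q/p\ge 1$. Once (1) is established, the lemma gives $Q_1(w_B)\lesssim Q_2(w_B)$, which is exactly \eqref{ nghbugtrt90g0-er9t-9} after extracting $q$-th roots.

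The content lies in verifying property (1): for every ball $B'$ of radius $R$,
$$\int_{B'} |E_{[0,1]}g|^q \;\lesssim\; \Big(\int |E_{[0,1]}g|^p \eta_{B'}\Big)^{q/p}.$$
Write $f:=E_{[0,1]}g$. Since $\widehat f$ is supported on $\Gamma_n\subset [0,1]^n$, one can fix a Schwartz $\psi$ with $\widehat\psi\equiv 1$ on $[-1,1]^n$ and write $f=f\ast\psi$; H\"older against the weight $|\psi|$ then yields the unit-scale Bernstein--Nikolskii bound
$$\|f\|_{L^\infty(B_0)}^p \;\lesssim\; \int |f|^p\,\eta_{B_0}$$
for every unit ball $B_0$. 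Covering $B'$ by finitely overlapping unit balls $B_0$ and using $q\ge p$ together with $|B_0|\lesssim 1$,
$$\int_{B'}|f|^q \;\le\; \sum_{B_0}\|f\|_{L^\infty(B_0)}^q \;\lesssim\; \sum_{B_0}\Big(\int |f|^p\eta_{B_0}\Big)^{q/p} \;\le\; \Big(\int |f|^p \sum_{B_0}\eta_{B_0}\Big)^{q/p},$$
the last step being the elementary inequality $\sum_i a_i^{t}\le (\sum_i a_i)^{t}$ valid for $t=q/p\ge 1$ and $a_i\ge 0$. Property (1) now reduces to the pointwise bound $\sum_{B_0\subset B'}\eta_{B_0}(y)\lesssim \eta_{B'}(y)$ on $\R^n$.

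The only technical point I expect is this last weight comparison. Inside $B'$ both sides are $\lesssim 1$, so the bound is trivial there. For $y$ at distance $D\gg R$ from $c_{B'}$, each term $\eta_{B_0}(y)$ is at most $\sim D^{-M}$ (by Schwartz decay of $\eta$), so the sum is bounded by $R^n\cdot D^{-M}$, whereas $\eta_{B'}(y)\sim R^M D^{-M}$. The inequality $R^n\le R^M$ used here is exactly where the hypothesis $R\ge 1$ enters, provided the Schwartz $\eta$ is chosen with decay exponent $M\ge n$. With this in hand, Lemma \ref{nl9} delivers the conclusion.
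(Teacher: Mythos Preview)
Your overall framework coincides with the paper's: both apply Lemma \ref{nl9} with exactly the $Q_1,Q_2$ you wrote down, and the only content is property (1). The paper verifies (1) differently, though: it chooses $\eta$ so that $\widehat{\eta^{1/p}}$ is supported in $B(0,1)$, writes $\|E_{[0,1]}g\|_{L^q(B')}\le\|\eta_{B'}^{1/p}E_{[0,1]}g\|_{L^q(\R^n)}$, observes that $\eta_{B'}^{1/p}E_{[0,1]}g$ has Fourier support in a fixed ball (here $R\ge 1$ enters, since $\widehat{\eta_{B'}^{1/p}}$ lives in $B(0,R^{-1})\subset B(0,1)$), and then applies the global Bernstein inequality $\|\cdot\|_{L^q(\R^n)}\lesssim\|\cdot\|_{L^p(\R^n)}$ for band-limited functions. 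No unit-ball decomposition and no weight comparison are needed.

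Your route via unit balls is natural, but the last step has a genuine gap. The estimate $\eta_{B'}(y)\sim R^M D^{-M}$ is false: $\eta$ in Lemma \ref{nl9} is a \emph{Schwartz} function, so it admits no polynomial lower bound, and your phrase ``Schwartz $\eta$ chosen with decay exponent $M$'' is a contradiction in terms. Concretely, take $\eta(x)=e^{-|x|^2}$. With $R=1+\varepsilon$ and $|y-c_{B'}|=D$, the nearest unit-ball center $c_0$ satisfies $|y-c_0|\ge D-R$, so the left side of your comparison is at least $e^{-(D-R)^2}$, while $\eta_{B'}(y)=e^{-(D/R)^2}$. The ratio is $\exp\!\big((D/R)^2-(D-R)^2\big)$, which for fixed $R>1$ is maximized at $D=R^3/(R^2-1)$ with value $\exp\!\big(R^2/(R^2-1)\big)\to\infty$ as $R\to 1^+$. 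Hence $\sum_{B_0}\eta_{B_0}\lesssim\eta_{B'}$ cannot hold with a constant independent of $R\ge 1$ for this $\eta$.

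The fix is easy once identified: Lemma \ref{nl9}'s proof only uses that $\eta$ is positive and decays faster than the exponent in $w_B$, so it holds verbatim with $\eta(x)=(1+|x|)^{-M}$ for any $M>100n$. With that polynomial $\eta$, your computation $R^nD^{-M}\lesssim R^M D^{-M}$ for $R\ge 1$, $M\ge n$, becomes correct and the argument goes through. Alternatively, you can keep $\eta$ Schwartz but use a strictly faster-decaying Schwartz weight in the unit-scale Bernstein step, and redo the tail estimate; the paper's multiplication trick, however, sidesteps this bookkeeping entirely.
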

\begin{proof}
Let $\eta$ be a positive smooth function on $\R^n$  satisfying $1_B\le \eta_B$ and such that the Fourier transform of $\eta^{\frac1p}$ is  supported in $B(0,1)$.
We can thus write
$$\|E_{[0,1]}g\|_{L^q(B)}\lesssim \|E_{[0,1]}g\|_{L^q(\eta_B^{\frac{q}{p}})}=\|\eta_B^{\frac1p}E_{[0,1]}g\|_{L^q(\R^n)}.$$
Since the Fourier transform of $\eta_B^{\frac1p}E_{[0,1]}g$ is supported in the ball $B(0,3)$, we have
$$\|\eta_B^{\frac1p}E_{[0,1]}g\|_{L^q(\R^n)}\lesssim \|\eta_B^{\frac1p}E_{[0,1]}g\|_{L^p(\R^n)}=\|E_{[0,1]}g\|_{L^p(\eta_B)}.$$
Apply now Lemma \ref{nl9} with $Q_1(u)=\|E_{[0,1]}g\|_{L^q(u)}^q$ and $Q_2(u)=\|E_{[0,1]}g\|_{L^p(u)}^q$.

\end{proof}

\bigskip

We continue by  recalling a few basic tools and inequalities from \cite{BD4}, that will be used in our iteration scheme.

Let $n\ge 2$, $p\ge 2$, $0<\delta\le 1$. We will denote by $V_p(\delta)=V_{p,n}(\delta)$ the smallest constant such that the inequality
$$\|E_{[0,1]}g\|_{L^p(w_B)}\le V_p(\delta)(\sum_{J\subset [0,1]\atop{|J|=\delta}}\|E_Jg\|_{L^p(w_B)}^2)^{1/2}$$
holds true for each ball $B\subset \R^n$ with radius $\delta^{-n}$ and each $g:[0,1]\to\C$.
\bigskip

We will rely on the following generalization of parabolic rescaling. See for example Section 7 in \cite{BD6} for a proof.
\begin{lemma}
Let $0<\rho\le 1$.
For each interval $I\subset [0,1]$ with length $\delta^{\rho}$  and each ball $B\subset \R^n$ with radius $\delta^{-n}$ we have
\begin{equation}
\label{ne31}
\|E_Ig\|_{L^p(w_B)}\lesssim V_p(\delta^{1-\rho})(\sum_{J\subset I\atop{|J|=\delta}}\|E_Jg\|_{L^p(w_{B})}^2)^{1/2}.
\end{equation}
\end{lemma}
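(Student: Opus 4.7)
The plan is the standard parabolic rescaling for the moment curve $\Gamma_n$. Without loss of generality write $I=[a,a+\delta^\rho]$ and substitute $t=a+\delta^\rho s$ with $s\in[0,1]$. Expanding the polynomial phase $\sum_{k=1}^n x_k t^k$ via the binomial theorem and regrouping in powers of $s$ rewrites it as $\sum_{k=1}^n \tilde x_k s^k+\psi(a,x)$, where
$$\tilde x_k=\delta^{\rho k}\sum_{j=k}^n\binom{j}{k}a^{j-k}x_j,\qquad 1\le k\le n.$$
The map $A:x\mapsto\tilde x$ is an affine isomorphism with upper-triangular linear part and $|\det A|=\delta^{\rho n(n+1)/2}$. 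Setting $\tilde g(s)=g(a+\delta^\rho s)$ yields $E_Ig(x)=\delta^\rho e(\psi(a,x))\,E_{[0,1]}\tilde g(\tilde x)$, and each subinterval $J\subset I$ with $|J|=\delta$ corresponds to an interval $J'\subset[0,1]$ with $|J'|=\delta^{1-\rho}$, satisfying the analogous identity $E_Jg(x)=\delta^\rho e(\psi)\,E_{J'}\tilde g(\tilde x)$.

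Next, change variables in the spatial integrals to $\tilde x=Ax$. The weight $w_B$ turns into $\tilde w:=w_B\circ A^{-1}$, which is adapted to the ellipsoid $A(B)$: its semi-axes are $\delta^{\rho k-n}$ in the $\tilde x_k$ direction, and the smallest of these (at $k=n$) equals $(\delta^{1-\rho})^{-n}$, which is precisely the reference radius required by the definition of $V_p(\delta^{1-\rho})$. To put ourselves in a position to apply that definition, cover $\R^n$ by a finitely overlapping family $\tilde\B$ of balls $\tilde B$ of radius $(\delta^{1-\rho})^{-n}$, invoke the definition on each such $\tilde B$ with input $\tilde g$, and reassemble using the pointwise bound
$$\tilde w(\tilde x)\lesssim \sum_{\tilde B\in\tilde\B}\tilde w(c_{\tilde B})\,w_{\tilde B}(\tilde x),$$
which is immediate from the $(100n)$-th power decay of $w_B$.

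The sum over $\tilde B$ is reassembled on the right-hand side by Minkowski's inequality for $L^{p/2}$ (valid since $p\ge 2$), which brings the square-function contributions back to a single weight $\tilde w$. Finally, reverting the change of variables $\tilde x=Ax$ and relabeling $J'\leftrightarrow J$ produces exactly the claimed inequality: the Jacobian $|\det A|=\delta^{\rho n(n+1)/2}$ and the rescaling factor $(\delta^\rho)^p$ distribute identically on both sides and cancel, leaving the constant $V_p(\delta^{1-\rho})$ undisturbed.

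The principal technical obstacle is the anisotropy of $A(B)$: although its smallest semi-axis matches the reference radius for $V_p(\delta^{1-\rho})$, the ellipsoid is much longer in the remaining directions, so a naive covering by balls of radius $(\delta^{1-\rho})^{-n}$ would introduce many overlapping contributions. The weight-decomposition plus Minkowski step above is precisely what prevents the appearance of any spurious polynomial factors in $\delta$, and it is the step in which the hypothesis $p\ge 2$ is used essentially.
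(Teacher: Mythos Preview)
Your approach is the standard parabolic rescaling that the paper invokes (it gives no self-contained argument here, deferring to Section~7 of \cite{BD6}); the affine change of variables, the identification of the short semi-axis of $A(B)$ as $(\delta^{1-\rho})^{-n}$, the tiling by balls $\tilde B$ of that radius, and the use of Minkowski in $\ell^{p/2}$ are all correct and are the essential ingredients.

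There is one technical point in your reassembly that does not go through as written. After Minkowski you land on the weight $v:=\sum_{\tilde B}\tilde w(c_{\tilde B})\,w_{\tilde B}$ on the right-hand side, and you implicitly use $v\lesssim\tilde w$. With the paper's specific polynomial weight this actually fails: at a point $\tilde x$ lying at distance $Mr$ from the centre of $A(B)$ in the \emph{short} direction one has $\tilde w(\tilde x)\sim(1+M)^{-100n}$, whereas the isotropic $100n$-tails of the many $w_{\tilde B}$ centred along the long axes of the ellipsoid accumulate to give $v(\tilde x)\sim(1+M)^{-100n+(n-1)}$, so $v/\tilde w$ is unbounded in $M$. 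The standard remedy is to first observe (by writing $w_B^{(100n)}$ as a convergent superposition of translates of $w_B^{(E)}$ and applying the same Minkowski step) that $V_p(\delta)$ is unchanged up to absolute constants when the decay exponent $100n$ is replaced by any larger $E$. Running your argument with, say, $E=200n$ then makes the reassembly $\sum_{\tilde B}\tilde w(c_{\tilde B})\,w_{\tilde B}^{(200n)}\lesssim\tilde w$ legitimate, because the bound $\tilde w(c_{\tilde B})\lesssim\tilde w(\tilde x)\,(1+|c_{\tilde B}-\tilde x|/r)^{100n}$ (which only uses $\|A^{-1}\|_{\mathrm{op}}\cdot r\lesssim R$) is now beaten by the $(1+|c_{\tilde B}-\tilde x|/r)^{-200n}$ tail with a summable margin.
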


 Let us now introduce a multilinear version of $V_p(\delta)$. Recall the relevance of $M_n=n!$ from Section \ref{nS1}.  Given also $K\ge 2M_n$, we will denote by $V_p(\delta,K)=V_{p,n}(\delta,K)$ the smallest constant such that the inequality
$$\|(\prod_{i=1}^{M_n}E_{I_i}g)^{1/M_n}\|_{L^p(w_B)}\le V_p(\delta,K)\prod_{i=1}^{M_n}(\sum_{J\subset I_i\atop{|J|=\delta}}\|E_Jg\|_{L^p(w_B)}^2)^{\frac1{2M_n}}$$
holds true for each ball $B\subset \R^n$ with radius $\delta^{-n}$, each $g:[0,1]\to\C$ and all intervals
$I_1,\ldots,I_{M_n}$  of the form $[\frac{i}{K},\frac{i+1}{K}]$ in $[0,1]$, that in addition are assumed to be non adjacent.
\bigskip

It is immediate that $V_p(\delta,K)\le V_p(\delta)$. The reverse inequality is also essentially true, apart from negligible losses. The proof is very similar to that of Theorem 8.1 in \cite{BD6}.
\begin{theorem}
For each $K$ there exists $\epsilon_p(K)$ with $\lim_{K\to\infty}\epsilon_p(K)=0$ and $C_{K,p}$ so that
\begin{equation}
\label{ne29}
V_p(\delta)\le C_{K,p}\delta^{-\epsilon_p(K)}\sup_{\delta\lesssim \delta'<1}V_p(\delta',K),
\end{equation}
for each $0<\delta\le 1$.
\end{theorem}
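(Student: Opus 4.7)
The strategy is a Bourgain--Guth type multilinear-to-linear reduction, applied iteratively across scales $\delta, K\delta, K^2\delta, \dots$ until we reach scale $\sim 1$. Fix $K \ge 2M_n$, a ball $B$ of radius $\delta^{-n}$, and partition $[0,1]$ into the $K$ intervals $I_i=[i/K,(i+1)/K]$. First I would dyadically pigeonhole these intervals according to the size of $\|E_{I_i}g\|_{L^p(w_B)}$: at the cost of logarithmic-in-$K$ factors, I reduce to a ``significant'' subcollection $\mathcal{S}\subset\{I_1,\dots,I_K\}$ in which all terms have comparable $L^p(w_B)$ norm, and outside of which the contributions are negligible.

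Next comes the dichotomy. \emph{Case (a): $|\mathcal{S}|\ge M_n$.} Since $K\ge 2M_n$, a simple pigeonhole guarantees $M_n$ pairwise non-adjacent intervals $I_{i_1},\dots,I_{i_{M_n}}\in\mathcal{S}$. The pointwise bound $|E_{[0,1]}g(x)|\lesssim (\log K)^{O(1)}\prod_j|E_{I_{i_j}}g(x)|^{1/M_n}$ on the pigeonholed set, followed by the multilinear definition of $V_p(\delta,K)$ and AM--GM, yields
$$\|E_{[0,1]}g\|_{L^p(w_B)} \le C'_{K,p}\, V_p(\delta,K)\,\Big(\sum_{|J|=\delta}\|E_Jg\|_{L^p(w_B)}^2\Big)^{1/2}.$$
\emph{Case (b): $|\mathcal{S}|<M_n$.} At most $M_n-1$ intervals of length $1/K$ carry the mass of $E_{[0,1]}g$. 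To each such $I_i$ I apply parabolic rescaling \eqref{ne31} with $\rho$ defined by $\delta^\rho=1/K$, obtaining $\|E_{I_i}g\|_{L^p(w_B)}\le V_p(K\delta)(\sum_{J\subset I_i,|J|=\delta}\|E_Jg\|^2_{L^p(w_B)})^{1/2}$. Summing over the (at most $M_n-1$) intervals in $\mathcal{S}$ gives
$$\|E_{[0,1]}g\|_{L^p(w_B)}\le M_n(\log K)^{O(1)}\,V_p(K\delta)\,\Big(\sum_{|J|=\delta}\|E_Jg\|_{L^p(w_B)}^2\Big)^{1/2}.$$

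Combining both cases produces the one-step recursion $V_p(\delta)\le C'_{K,p}V_p(\delta,K)+M_n(\log K)^{O(1)}V_p(K\delta)$. I iterate it $n=\lceil\log(1/\delta)/\log K\rceil$ times; at the terminal scale $K^n\delta\gtrsim 1$, we have $V_p(K^n\delta)=O(1)$ trivially. Unrolling the recursion bounds $V_p(\delta)$ by $C'_{K,p}\bigl(M_n(\log K)^{O(1)}\bigr)^n\sup_{\delta'\ge\delta}V_p(\delta',K)$ plus a harmless additive term. The critical observation is that the factor accumulating through the iteration is $M_n(\log K)^{O(1)}$, \emph{not} the full polynomial-in-$K$ constant from case (a), which appears only once as the outer multiplier. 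Hence
$$\bigl(M_n(\log K)^{O(1)}\bigr)^n = \delta^{-\epsilon_p(K)},\qquad \epsilon_p(K)=\frac{\log M_n+O(\log\log K)}{\log K}\xrightarrow{K\to\infty}0,$$
giving \eqref{ne29}.

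\textbf{Main obstacle.} The decisive technical point is the pigeonholing step that ensures case (b) involves at most $M_n-1=O_n(1)$ intervals rather than all $K$ of them. If the naive bound $\|\max_i|E_{I_i}g|\|_p^p\le\sum_i\|E_{I_i}g\|_p^p$ were used directly, the iteration constant would be $K^{O(1)}$ instead of $M_n$, producing $\epsilon_p(K)=O(1)$ uniformly and collapsing the argument. The pigeonholing and the extraction of $M_n$ non-adjacent significant intervals is exactly where the combinatorial structure permits case (a) to bear the full weight of $V_p(\delta,K)$, so that case (b) -- the ``bad'' iterative case -- contributes only an $n$-independent constant per step. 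This is the mechanism that forces $\epsilon_p(K)\to 0$, and it parallels the strategy of Theorem~8.1 in \cite{BD6}.
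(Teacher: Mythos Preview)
Your overall architecture---Bourgain--Guth dichotomy at scale $1/K$, parabolic rescaling in the ``narrow'' case, iteration over $\sim \log_K(1/\delta)$ steps with a per-step constant depending only on $n$---is exactly the right one, and your analysis of the \emph{main obstacle} is on target. But the dichotomy itself is set up incorrectly, and as written Case~(a) does not go through.

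The gap is this: you pigeonhole the intervals $I_i$ according to the \emph{global} norms $\|E_{I_i}g\|_{L^p(w_B)}$, and then assert a \emph{pointwise} inequality
\[
|E_{[0,1]}g(x)| \lesssim (\log K)^{O(1)}\prod_{j=1}^{M_n}|E_{I_{i_j}}g(x)|^{1/M_n}.
\]
That implication simply fails. Knowing that the norms $\|E_{I_{i_j}}g\|_{L^p(w_B)}$ are mutually comparable tells you nothing about the relative sizes of $|E_{I_{i_j}}g(x)|$ at a given point $x$; one of the factors on the right may well vanish where $|E_{[0,1]}g(x)|$ is large. And the multilinear constant $V_p(\delta,K)$ controls $\|\prod_j|E_{I_{i_j}}g|^{1/M_n}\|_{L^p}$, not $\prod_j\|E_{I_{i_j}}g\|_{L^p}^{1/M_n}$; H\"older goes the wrong way to pass between them.

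The fix is to run the dichotomy \emph{pointwise}. For each $x\in B$, let $I_{(1)},I_{(2)},\ldots$ order the intervals by decreasing $|E_{I_i}g(x)|$. If $|E_{I_{(2M_n-1)}}g(x)|\ge K^{-1}|E_{I_{(1)}}g(x)|$, then among $I_{(1)},\ldots,I_{(2M_n-1)}$ one can extract $M_n$ pairwise non-adjacent intervals, giving
\[
|E_{[0,1]}g(x)|\le K|E_{I_{(1)}}g(x)|\le K^{M_n}\prod_j|E_{I_{i_j}}g(x)|^{1/M_n}.
\]
Otherwise at most $2M_n-2$ intervals satisfy $|E_{I_i}g(x)|\ge K^{-1}|E_{I_{(1)}}g(x)|$, and the remaining ones contribute at most $K\cdot K^{-1}|E_{I_{(1)}}g(x)|$ in total, so
\[
|E_{[0,1]}g(x)|\le (2M_n-1)\max_i|E_{I_i}g(x)|.
\]
Taking $L^p$ norms (and using $\|\max_i F_i\|_p^p\le\sum_i\|F_i\|_p^p$ in the second case, then $\sum_i a_i^{p/2}\le(\sum_i a_i)^{p/2}$ after rescaling) gives the recursion
\[
V_p(\delta)\;\le\; C_K\,V_p(\delta,K)\;+\;(2M_n-1)\,V_p(K\delta),
\]
and your iteration from there is correct, with $\epsilon_p(K)=\log(2M_n-1)/\log K$. (A minor side remark: $|\mathcal S|\ge M_n$ alone would not guarantee $M_n$ non-adjacent intervals; one needs $|\mathcal S|\ge 2M_n-1$. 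The pointwise version handles this automatically.)
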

It is worth mentioning that the value of $M_n$, as long as it only depends  on $n$, is irrelevant for the validity of \eqref{ne29}.
\bigskip

\section{The iteration scheme}
The goal of this section is to prove the multi-scale inequality in Theorem \ref{nt27}. There will be three different principles that we will apply repeatedly.
\medskip

$\bullet$ \textbf{$L^2$ decoupling:} This exploits $L^2$ orthogonality and will allow us to decouple to the smallest possible scale, equal to the inverse of the radius of the ball. This principle is illustrated by the following simple result.
\begin{lemma}
\label{nl3}
Let $\J_i$ be arbitrary collections of pairwise disjoint intervals $J$ with length equal to an integer multiple of  $R^{-1}$. Then for each integer $M\ge 1$ and each $B_R\subset\R^n$ we have
$$\big[\prod_{i=1}^{M}(\sum_{J\in \J_i}\|E_{J}g\|_{L^2_\sharp(w_{B_R})}^2)^{1/2}\big]^{\frac1{M}}\lesssim \big[\prod_{i=1}^{M}(\sum_{|\Delta|=R^{-1},\;\Delta\subset J\atop{\text{ for some }J\in\J_i}}\|E_{\Delta}g\|_{L^2_\sharp(w_{B_R})}^2)^{1/2}\big]^{\frac1{M}}.$$
\end{lemma}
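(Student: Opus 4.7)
The proof reduces, via Lemma \ref{nl9}, to the critical-scale single-interval $L^2$-decoupling
\[
\|E_J g\|_{L^2(w_{B_R})}^2\;\lesssim\;\sum_{\substack{\Delta\subset J\\|\Delta|=R^{-1}}}\|E_\Delta g\|_{L^2(w_{B_R})}^2
\]
valid for any single interval $J$ whose length is an integer multiple of $R^{-1}$. Granted this, summing over the pairwise disjoint $J\in\J_i$ and dividing by $|B_R|$ to pass to the sharp norm yields
\[
\Big(\sum_{J\in\J_i}\|E_J g\|_{L^2_\sharp(w_{B_R})}^2\Big)^{1/2}\lesssim \Big(\sum_{\substack{\Delta\subset J\in\J_i\\|\Delta|=R^{-1}}}\|E_\Delta g\|_{L^2_\sharp(w_{B_R})}^2\Big)^{1/2}
\]
for each $i$, and taking the product over $i=1,\dots,M$ followed by the $\tfrac1M$-th power gives the stated inequality.

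To establish the per-interval bound I apply Lemma \ref{nl9} with $Q_1(u)=\|E_J g\|_{L^2(u)}^2$ and $Q_2(u)=\sum_{\Delta}\|E_\Delta g\|_{L^2(u)}^2$; both functionals are additive and monotone in the weight, so hypotheses (2)--(4) of that lemma are immediate. For hypothesis (1), I pick a nonnegative Schwartz function $\eta$ with $\eta\ge 1$ on $B(0,1)$ and $\widehat{\eta}$ supported in $B(0,\tfrac14)$, and for a ball $B=B(c,R)$ set $\eta_B(x)=\eta((x-c)/R)$, so that $\eta_B\ge 1_B$ while $\widehat{\eta_B}$ lives in a ball of radius $R^{-1}/4$. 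Then $Q_1(1_B)\le\|E_J g\|_{L^2(\eta_B)}^2$, and it suffices to show $\|E_J g\|_{L^2(\eta_B)}^2\lesssim Q_2(\eta_B)$.

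The remaining inequality is pure Fourier orthogonality. Expanding $E_J g=\sum_\Delta E_\Delta g$ and writing out the square,
\[
\int|E_J g|^2\eta_B\,dx=\sum_{\Delta,\Delta'}\int E_\Delta g\,\overline{E_{\Delta'}g}\,\eta_B\,dx,
\]
each cross term equals the value at the origin of the Fourier transform of $E_\Delta g\cdot\overline{E_{\Delta'}g}\cdot\eta_B$, whose Fourier support lies in $\Gamma_\Delta-\Gamma_{\Delta'}+B(0,R^{-1}/4)$. The first coordinate of any point in $\Gamma_\Delta-\Gamma_{\Delta'}$ lies in $\Delta-\Delta'$, which is separated from $0$ by at least $R^{-1}$ whenever $\Delta$ and $\Delta'$ are not adjacent intervals in the partition of $J$. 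Thus only $O(1)$ adjacent pairs per $\Delta$ survive, and Cauchy--Schwarz absorbs these into the diagonal $\sum_\Delta\|E_\Delta g\|_{L^2(\eta_B)}^2$. The only substantive point is the weight-replacement from $w_{B_R}$ to the Fourier-compactly supported bump $\eta_{B_R}$---precisely the bookkeeping automated by Lemma \ref{nl9}---after which the argument is a textbook Plancherel manipulation exploiting that the first coordinate already separates non-adjacent arcs of $\Gamma$ at scale $R^{-1}$.
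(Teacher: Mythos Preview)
Your proof is correct and follows essentially the same route as the paper: reduce to the single-interval $L^2$ decoupling \eqref{ne89}, invoke Lemma \ref{nl9} to trade the weight $w_{B_R}$ for a Fourier-compactly supported bump $\eta_{B_R}$, and then use Plancherel-type orthogonality of the pieces $E_\Delta g$ at scale $R^{-1}$. The only cosmetic difference is that the paper factors $\eta_B=(\sqrt{\eta_B})^2$ (choosing $\eta$ so that $\widehat{\sqrt{\eta}}$ is compactly supported) and quotes $L^2$-orthogonality of the functions $\sqrt{\eta_{B}}\,E_\Delta g$, whereas you keep $\widehat{\eta}$ itself compactly supported and compute the cross terms directly; these are equivalent implementations of the same Fourier-support argument.
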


\begin{proof}
The proof will not exploit any transversality, so the value of $M$ will not matter.
It suffices to prove that
\begin{equation}
\label{ne89}
\|E_{J}g\|_{L^2(w_{B_R})}^2\lesssim \sum_{|\Delta|=R^{-1},\;\Delta\subset J}\|E_{\Delta}g\|_{L^2(w_{B_R})}^2.
\end{equation}
Fix   a positive Schwartz function $\eta$ such that the Fourier transform of $\sqrt{\eta}$ is supported in a small neighborhood of the origin, and whose values are nonzero on the unit ball $B(0,1)$. For a ball $B=B(c,R)$ define
$$\eta_B(x)=\eta(\frac{x-c}{R}).$$
By invoking Lemma \ref{nl9} we see that inequality \eqref{ne89} will follow once we check that
\begin{equation}
\label{ne90}
\|E_{J}g\|_{L^2({B'})}^2\lesssim \sum_{|\Delta|=R^{-1},\;\Delta\subset J}\|E_{\Delta}g\|_{L^2(\eta_{B'})}^2
\end{equation}
holds true for each ball $B'$ with radius $R$.

Note that the Fourier transform of $\sqrt{\eta_{B'}}E_{J}g$ will be supported inside some $R^{-1}-$neighborhood of the arc $\Gamma_J$, and that these neighborhoods are pairwise disjoint for two non adjacent arcs. Since
$$\|E_{J}g\|_{L^2({B'})}^2\lesssim \|E_{J}g\|_{L^2(\eta_{B'})}^2=\|\sqrt{\eta_{B'}}E_{J}g\|_{L^2(\R^n)}^2,$$
\eqref{ne90} will now immediately follow from the $L^2$ orthogonality of the functions $\sqrt{\eta_{B'}}E_{J}g$.

\end{proof}
\medskip

$\bullet$ \textbf{Lower dimensional decoupling:} Various arcs on the curve $\Gamma_n$ in $\R^n$ will look lower dimensional at the appropriate scale. As a result, they will be decoupled into smaller arcs using Theorem \ref{ntmain} in lower dimensions. This is illustrated by the following result. To avoid confusion, we will use the notation $E^{(n)}$ for the extension operator relative to $\Gamma_n$ in $\R^n$.
\begin{lemma}
\label{nl5}
Let $I=[t_0,t_0+\sigma]\subset [0,1]$ and let $3\le k\le n$.
Then for each $\sigma^{-k+1}\lesssim R$, each ball $B\subset\R^n$ with radius $R$ and each $p\ge 2$ we have
$$\|E_{I}^{(n)}g\|_{L^{p}_\sharp(w_B)}\lesssim V_{p,k-1}(\sigma)(\sum_{|J|=R^{-\frac1{k-1}},\;J\subset I}\|E_{J}^{(n)}g\|_{L^{p}_\sharp(w_B)}^2)^{1/2}.$$
\end{lemma}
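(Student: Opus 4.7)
The plan hinges on the observation that on a ball of radius $R \gtrsim \sigma^{-(k-1)}$, an arc of $\Gamma_n$ of length $\sigma$ looks effectively $(k-1)$-dimensional: the $k$-th through $n$-th derivatives of $\Phi(t)=(t,t^2,\ldots,t^n)$ contribute motions of magnitude $O(\sigma^k)$ or smaller on such an arc, which is below the spatial resolution $1/R$. Accordingly, the $n$-dimensional decoupling should reduce to a $(k-1)$-dimensional one applied slice-wise, after freezing the last $n-k+1$ spatial coordinates.

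Concretely, write $x=(x'',x')$ with $x''=(x_1,\ldots,x_{k-1})$ and $x'=(x_k,\ldots,x_n)$. The key factorization is
$$E_I^{(n)}g(x'',x')=E_I^{(k-1)}g_{x'}(x''),\qquad g_{x'}(t):=g(t)\,e(x_kt^k+\ldots+x_nt^n),$$
and analogously $E_J^{(n)}g(x'',x')=E_J^{(k-1)}g_{x'}(x'')$ for every sub-arc $J\subset I$. For each fixed $x'$, the slice $(\R^{k-1}\times\{x'\})\cap B$ fits inside a $(k-1)$-dimensional ball $B''$ of radius $R$. Applying the generalized parabolic rescaling \eqref{ne31} in dimension $k-1$ with $\delta=R^{-1/(k-1)}$ and $\rho$ determined by $\delta^\rho=\sigma$ (legitimate precisely because the hypothesis $R\gtrsim\sigma^{-(k-1)}$ places $\rho$ in $(0,1]$ and makes $\delta^{-(k-1)}=R$) yields the slice estimate
$$\|E_I^{(k-1)}g_{x'}\|_{L^p(w_{B''})}\lesssim V_{p,k-1}(\sigma)\Bigl(\sum_{|J|=R^{-1/(k-1)}}\|E_J^{(k-1)}g_{x'}\|_{L^p(w_{B''})}^2\Bigr)^{1/2}.$$

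It then remains to raise this slice inequality to the $p$-th power, integrate in $x'$, and apply Minkowski's inequality in $L^{p/2}$ (valid since $p\ge 2$) to pull the $\ell^2_J$-sum out of the $x'$-integration; using the factorization $E_J^{(k-1)}g_{x'}(x'')=E_J^{(n)}g(x'',x')$ this reassembles the right-hand side as $\bigl(\sum_J\|E_J^{(n)}g\|_{L^p(w_B)}^2\bigr)^{1/2}$. The main technical obstacle is that $w_B$ does not tensor-split on slices; this is handled by the standard weight manipulations (in the spirit of Lemma \ref{nl9}), bounding $w_B$ by a superposition of tensor-product weights via its rapid decay so that the slice weight can be replaced by $w_{B''}$ at the cost of harmless constants, and so that the $x'$-integration correctly reassembles into $\|\cdot\|_{L^p(w_B)}$.
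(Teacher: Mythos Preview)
Your approach is genuinely different from the paper's and is a clean alternative. The paper first reduces to the borderline scale $R\sim\sigma^{-(k-1)}$, performs an affine change of variables to send $I$ to $[0,\sigma]$, and then uses the geometric observation that at that scale the arc $\Gamma_n|_{[0,\sigma]}$ lies in an $O(R^{-1})$-neighborhood of the $(k-1)$-dimensional moment curve $(t,\ldots,t^{k-1},0,\ldots,0)$, so that the lower-dimensional decoupling may be invoked via ``standard mollification arguments''. Your slicing argument bypasses both the curve-approximation step and the mollification by exploiting the exact algebraic factorization $E_I^{(n)}g(x'',x')=E_I^{(k-1)}g_{x'}(x'')$ and applying \eqref{ne31} directly on each slice, then reassembling via Minkowski. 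This is more direct and works for all $R\gtrsim\sigma^{-(k-1)}$ at once.

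There is, however, a slip in your invocation of \eqref{ne31}. With $\delta=R^{-1/(k-1)}$ and $\delta^\rho=\sigma$, the constant that \eqref{ne31} actually yields is
\[
V_{p,k-1}(\delta^{1-\rho})=V_{p,k-1}(\delta/\sigma)=V_{p,k-1}\bigl(R^{-1/(k-1)}/\sigma\bigr),
\]
not $V_{p,k-1}(\sigma)$; these coincide only when $R\sim\sigma^{-2(k-1)}$, and in general neither dominates the other. So as written you have not established the lemma with its stated constant. That said, everywhere Lemma~\ref{nl5} is applied in the paper one immediately uses the induction hypothesis $V_{p,k-1}(\cdot)\lesssim_\epsilon(\cdot)^{-\epsilon}$, and under that hypothesis your constant is equally acceptable (and in the typical applications, e.g.\ $\sigma=\delta$, $R=\delta^{-3}$, $k=3$, it is actually the smaller quantity $V_{6,2}(\delta^{1/2})$). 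You should either correct the constant in your displayed slice estimate or add a remark that the variant you obtain suffices for all downstream uses.
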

\begin{proof}
To simplify numerology, let us assume $k=n=3$. The proof in the general case  follows via trivial modifications. We rely on three observations. First, note that it suffices to consider the case  $R\sim\sigma^{-2}$. The general case follows by summing over balls with radius $R\sim\sigma^{-2}$.

Second, using the change of variables $s=t-t_0$ we can rewrite
$$|E_{I}^{(3)}g(x_1,x_2,x_3)|=|\int_0^\sigma g_{t_0}(s)e(s(x_1+2t_0x_2+3t_0^2x_3)+s^2(x_2+3t_0x_3)+s^3x_3)ds|,$$
$$|E_{J}^{(3)}g(x_1,x_2,x_3)|=|\int_{J-t_0} g_{t_0}(s)e(s(x_1+2t_0x_2+3t_0^2x_3)+s^2(x_2+3t_0x_3)+s^3x_3)ds|,$$
with
$$g_{t_0}(s)=g(s+t_0).$$
Since the linear transformation
$$(x_1,x_2,x_3)\mapsto(x_1+2t_0x_2+3t_0^2x_3,x_2+3t_0x_3,x_3)$$
has bounded distortion for $t_0\in[0,1]$, it suffices to prove that for each $g:[0,\sigma]\to\C$
\begin{equation}
\label{ne30}
\|E_{[0,\sigma]}^{(3)}g\|_{L^{p}_\sharp(w_B)}\lesssim V_{p,2}(R^{-\frac1{2}})(\sum_{|J|=R^{-\frac1{2}},\;J\subset [0,\sigma]}\|E_{J}^{(3)}g\|_{L^{p}_\sharp(w_B)}^2)^{1/2}.
\end{equation}
Third, we observe that since $\sigma^{3}\lesssim R^{-1}\sim \sigma^2$, the curve
$$\{(t,t^2,t^3),\;t\in[0,\sigma]\}$$
is within an $O(R^{-1})-$neighborhood of the (planar) parabola
$$(t,t^2,0),\;t\in[0,\sigma].$$

Since we decouple on balls $B$ of radius $R$, the two curves are indistinguishable at this scale and \eqref{ne30} follows via standard mollification arguments. See for example \cite{BD3} or \cite{BD4}.

\end{proof}
\medskip

$\bullet$ \textbf{Ball inflation:} We will iterate our multi-scale inequality by repeatedly passing  to balls of larger radius, a process we call {\em ball inflation}. This is encoded by Theorem \ref{nt4} and by inequality \eqref{j nbyrtug8rt7g856uy956y9}, that will work as a substitute for Theorem \ref{nt4}, when $k=n$.
The motivation for the ball inflation is that once we integrate on larger balls, we are able to decouple into intervals of finer scales, by combining the previously mentioned $L^2$ and lower dimensional decouplings.

The proof of the ball inflation inequality  \eqref{hdgfhgdsfdshfjhdkghfdgkjfkjhgkl} relies on the hierarchy of   multilinear Kakeya-type inequalities described in  Theorem \ref{nt3}. This is the place in our argument where the merit of  the multilinear perspective is revealed.
\bigskip

For the remainder of the section we fix an integer $K\ge 2M_n$ and the intervals
$I_1,\ldots,I_{M_n}$ of the form $[\frac{i}{K},\frac{i+1}{K}]$, with $i$ in a collection of non consecutive integers among $0,1,\ldots,K-1$. Note that there are finitely many such choices of intervals, so the implicit constants below (such as the one in  \eqref{ne67}) can be taken to be uniform over the intervals we are using.
\bigskip

Given a function $g:[0,1]\to\C$, a ball $B^r$ with radius $\delta^{-r}$ in $\R^n$, $t\ge 1$ and $q\le s \le r$, write
$$D_t(q,B^r)=D_t(q,B^r,g)=\big[\prod_{i=1}^{M_n}(\sum_{J_{i,q}\subset I_i\atop{|J_{i,q}|=\delta^q}}\|E_{J_{i,q}}g\|_{L^{t}_{\sharp}(w_{B^r})}^2)^{1/2}\big]^{\frac{1}{M_n}
}.$$
Also, given a finitely overlapping cover $\B_s(B^r)$ of  $B^r$ with balls $B^s$, we let
$$A_p(q,B^r,s)=A_p(q,B^r,s,g)=\big(\frac1{|\B_s(B^r)|}\sum_{B^s\in\B_s(B^r)}D_2(q,B^s)^p\big)^{\frac1p}.$$
The letter $A$ will remind us that we have an average.  Note that when $r=s$,
$$A_p(q,B^r,r)=D_2(q,B^r).$$
The function $g$ will be fixed throughout the section, so we can drop the dependence on $g$. All implicit constants will be independent of $g$, $\delta$. We will spend the rest of this section proving the following key result. The main philosophy behind our proof is to gradually increase the size of the balls and then to decrease the size of the intervals.

\begin{theorem}
\label{djbcjhdfbvcjfvhfdghvfdgvh}
\label{nt27}Fix $n\ge 3$ and let $p<n(n+1)$ be sufficiently close to $n(n+1)$. Assume Theorem \ref{ntmain} holds for all smaller values of $n$, including $n=2$. Then for each $W>0$, there exist finitely many  positive weights $b_I,\gamma_I$ satisfying
$$\sum_I\gamma_I<1,$$
$$\sum_Ib_I\gamma_I>W,$$
so that for each sufficiently small $u>0$ (how small will only depend on $W$), the following inequality holds for each $0<\delta\le 1$ and for each ball $B^n$ in $\R^n$ of radius $\delta^{-n}$
\begin{equation}
\label{ne67}
A_p(u,B^n,u)\lesssim_{\epsilon,K,W}\delta^{-\epsilon}V_{p,n}(\delta)^{1-\sum_I\gamma_I}D_p(1,B^n)^{1-\sum_I\gamma_I}\times
\end{equation}
$$\prod_{I}A_p(ub_I,B^n,ub_I)^{\gamma_I}.$$
\end{theorem}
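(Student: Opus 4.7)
The plan is to derive \eqref{ne67} by iterating a single\nobreakdash-step inequality of the form
$$A_p(q, B^n, q) \;\lesssim_{\epsilon, K}\; \delta^{-\epsilon}\, V_{p,n}(\delta)^{\alpha}\, D_p(1, B^n)^{\beta}\, A_p(bq, B^n, bq)^{\gamma}, \qquad \alpha+\beta+\gamma=1,\ b>1,\ 0<\gamma<1,$$
with $b$ and $\gamma$ depending only on $n$ and $p$. Iterating with $q$ replaced in turn by $u, bu, b^{2}u, \dots, b^{N-1}u$ and telescoping produces an inequality of the advertised shape: the role of the index $I$ is played by the iteration step $j=0,\ldots,N-1$, the weights become $\gamma_j \propto \alpha\gamma^{j}$ and the scale multipliers $b_j = b^{j}$. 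The side conditions $\sum_j \gamma_j<1$ and $\sum_j b_j\gamma_j>W$ then follow respectively from $\gamma<1$ and from choosing $N$ large enough that $b^{N}>W$.

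The single iteration step I would construct by chaining the three tools highlighted at the start of the section. First, a \textbf{ball inflation} via Theorem \ref{nt4} with $\rho=\delta^{q}$ and some $1\le k\le n-1$, which replaces averages over sub-balls of radius $\delta^{-kq}$ by averages over sub-balls of radius $\delta^{-(k+1)q}$. Since Theorem \ref{nt4} is stated in the $L^{pk/n}_\sharp$ norm while $A_p$ uses $L^{2}_\sharp$ through $D_2$, I first apply H\"older's inequality at each sub-ball to convert $L^{2}_\sharp$ to $L^{pk/n}_\sharp$ (legitimate because $p\ge 2n$, as in \eqref{ne4}). This H\"older step produces the power $\prod_i N_i^{1/2-n/(pk)}$ that is responsible for the gain $\gamma<1$ and is what forces $p$ to be strictly below — though close to — $n(n+1)$. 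Second, a \textbf{lower-dimensional decoupling} via Lemma \ref{nl5}: once we sit on sub-balls of radius $\delta^{-(k+1)q}$, arcs of $\Gamma_n$ of length $\delta^{q}$ look $k$-dimensional at that scale, so Lemma \ref{nl5} refines them into arcs of length $\delta^{q(k+1)/k}$, at a cost of $V_{p,k-1}(\delta^{q/k})\lesssim_\epsilon \delta^{-\epsilon}$. Here the inductive hypothesis on $n$ enters crucially. Third, an \textbf{$L^{2}$ decoupling} via Lemma \ref{nl3} and/or a direct invocation of $V_{p,n}(\delta)$ to close off part of the branch: whenever we want to stop iterating and convert the currently running $A_p$ into the right-hand side of \eqref{ne67}, we inflate up to the full ball $B^n$ of radius $\delta^{-n}$ (using \eqref{j nbyrtug8rt7g856uy956y9} for the final $k=n$ inflation, which is free) and then apply the linear decoupling hypothesis $V_{p,n}(\delta)$ to the already-fine pieces, reducing them to the length-$\delta$ arcs that constitute $D_p(1,B^{n})$.

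Concretely, the closed-off branch contributes $V_{p,n}(\delta)^{\alpha} D_p(1,B^{n})^{\beta}$, while the branch that is pushed further contributes $A_p(bq, B^{n}, bq)^{\gamma}$ with $b = (k+1)/k$. By combining the H\"older exponents from the inflation step with the trivial rescaling $V_{p,k-1}(\sigma)\lesssim_\epsilon \sigma^{-\epsilon}$ from Step (b) and \eqref{ne31} for adjusting the scale of the ``left over'' $A_p$, one verifies $\alpha+\beta+\gamma=1$ at each step. The multilinear constant $V_p(\delta,K)$ produced along the way is converted back to the linear $V_{p,n}(\delta)$ via \eqref{ne29} at the very end, at the price of an additional $\delta^{-\epsilon_p(K)}$, which is absorbed into the overall $\delta^{-\epsilon}$.

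The main obstacle, as in \cite{BD4}, is the bookkeeping. Each of the ingredients is essentially sharp in its own regime, but they combine non-trivially, and one must verify that the H\"older exponents balance so that $\alpha+\beta+\gamma = 1$ with $\gamma$ strictly less than $1$ and $b$ strictly greater than $1$; this is precisely where the hypothesis ``$p$ sufficiently close to $n(n+1)$'' is used. A second subtlety is that the present argument exploits the \emph{full} hierarchy of multilinear Kakeya estimates in Theorem \ref{nt3} — every $1\le k\le n-1$ contributes one iteration scale, in contrast to \cite{BD4} where only $k=1$ was used — so one must check that for each such $k$ the transversality hypothesis of Lemma \ref{nl1} is satisfied by the intervals $I_1,\ldots,I_{M_n}$ throughout the iteration, and that the non-adjacency assumption is preserved under the rescalings used in Lemma \ref{nl5}. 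Once this bookkeeping is carried out, the iteration runs as many times as one wishes, and \eqref{ne67} follows for any prescribed $W$ by choosing $N$ large and $u$ small enough that $\delta^{b^{N}u}$ is still a meaningful scale.
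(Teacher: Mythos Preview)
There is a genuine gap at the core of your scheme. A single-step inequality with \emph{one} $A_p$ term on the right, iterated $N$ times, yields a single residual term $A_p(b^{N}u,B^{n},b^{N}u)^{\gamma^{N}}$, not a product over $j=0,\dots,N-1$. In the notation of the theorem there is then only one index $I$, with $b_{I}=b^{N}$ and $\gamma_{I}=\gamma^{N}$, so $\sum_{I} b_{I}\gamma_{I}=(b\gamma)^{N}$. For this to exceed an arbitrary $W$ you need $b\gamma>1$; the condition ``$b^{N}>W$'' is neither correct nor sufficient. And no single inflation step achieves $b\gamma>1$: for $n=3$ the $k=1$ step of Proposition~\ref{np4} gives $b=2$ and $\gamma=1-\alpha_{1}\to\tfrac13$ as $p\to 12$, so $b\gamma\to\tfrac23$. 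Every individual branch at the critical exponent contributes a product $b\gamma$ strictly below $1$; only the \emph{sum over all branches} reaches $1$.

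The paper's mechanism is structurally different. One complete pass (Propositions~\ref{np4}--\ref{np6} for $n=3$, and the analogue in \S\ref{slast} in general) uses \emph{all} the inflations $k=1,\dots,n-1$ in succession and produces a tree whose leaves are several terms $A_{p}(b_{j}u,B^{n},b_{j}u)^{\gamma_{j}}$ together with several $D_{p}$ terms. The decisive quantity is $\omega_{1}:=\sum_{j}b_{j}\gamma_{j}$, and the entire Appendix (Theorem~\ref{tApe}) is devoted to showing $\omega_{1}>1$ for $p$ just below $n(n+1)$, by analysing the linear system~\eqref{ne47}. Only once $\omega_{1}>1$ is known does the outer $M$-fold self-composition (Theorems~\ref{nt7} and~\ref{nt27aaa}) give $\sum_{I}b_{I}\gamma_{I}=\omega_{1}^{M}$, which can be pushed past $W$. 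Your sketch suppresses both the multi-branch tree and the verification $\omega_{1}>1$, and the latter is the heart of the proof.

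Two smaller points. Invoking Lemma~\ref{nl5} directly at exponent $p$ costs $V_{p,k-1}(\sigma)$, whereas the inductive hypothesis controls only $V_{j(j+1),j}$; the paper first H\"older-interpolates $L^{kp/n}$ between $L^{k(k+1)}$ and $L^{(k+1)p/n}$ (producing the weights $\alpha_{k}$), applies the lower-dimensional decoupling at the critical exponent $k(k+1)$, then interpolates $L^{k(k+1)}$ between $L^{2}$ and $L^{kp/n}$ (producing the weights $\beta_{k}$). These are precisely the coefficients entering~\eqref{ne47}, so they cannot be bypassed. Finally, \eqref{ne29} plays no role inside the proof of Theorem~\ref{nt27}; the linear constant $V_{p,n}(\delta)$ enters directly through $D_{p}(v,B^{n})\le V_{p,n}(\delta)\,D_{p}(1,B^{n})$ as in~\eqref{ne42}.
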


In the next subsection we prove this theorem for  $n=3$. Then we reinforce the ideas behind the proof in the case $n=4$, before explaining the case of arbitrary dimension.
\bigskip

\subsection{The case $n=3$}
\label{sub1}
In this subsection we will operate under the knowledge that Theorem \ref{ntmain} holds true for $n=2$, as proved in \cite{BD3}. In other words, we will use that
\begin{equation}
\label{ne50}
V_{6,2}(\delta)\lesssim_\epsilon \delta^{-\epsilon}.
\end{equation}
Alternatively, one may reprove this fact by a rather straightforward adaptation of the arguments in this paper. The proof is fairly short, and it involves only one type of ball inflation. The details are left to the interested reader.

We fix an integer $K\ge 2M_3$ and the intervals
$I_1,\ldots,I_{M_3}$ of the form $[\frac{i}{K},\frac{i+1}{K}]$, with $i$ in a collection of non consecutive integers among $0,1,\ldots,K-1$. Note that there are finitely many such choices of intervals, so the implicit constants in the inequalities in this section will not depend on the intervals we are using.

From now on, we will denote by $B^r$ an arbitrary ball of radius $\delta^{-r}$ in $\R^3$.

\begin{proposition}
\label{np4}
For $p\ge 6$, let $\alpha_1=\alpha_1(p)$ satisfy
$$\frac1{\frac{p}{3}}=\frac{\alpha_1}{\frac{2p}{3}}+\frac{1-\alpha_1}{2}.$$
Fix a  ball $B^2\subset \R^3$ with radius $\delta^{-2}$, and let $\B_1(B^2)$ be a finitely overlapping cover of $B^2$ with balls $B^1$. The following inequality holds
\begin{equation}
\label{rovgoprty8ut9v0vir890g76ry890u}
\big(\frac1{|\B_1(B^2)|}\sum_{B^1\in\B_1(B^2)}\big[\prod_{i=1}^{M_3}(\sum_{J_{i,1}\subset I_i\atop{|J_{i,1}|=\delta}}\|E_{J_{i,1}}g\|_{L^{2}_{\sharp}(w_{B^1})}^2)^{1/2}\big]^{\frac{p}{M_3}
}\big)^{\frac1p}
\lesssim_{\epsilon,K} \delta^{-\epsilon}
\end{equation}
$$\times \big[\prod_{i=1}^{M_3}(\sum_{J_{i,2}\subset I_i\atop{|J_{i,2}|=\delta^2}}\|E_{J_{i,2}}g\|_{L^{2}_{\sharp}(w_{B^2})}^2)^{1/2}\big]^{\frac{1-\alpha_1}{M_3}
}$$
$$\times \big[\prod_{i=1}^{M_3}(\sum_{J_{i,1}\subset I_i\atop{|J_{i,1}|=\delta}}\|E_{J_{i,1}}g\|_{L^{\frac{2p}3}_{\sharp}(w_{B^2})}^2)^{1/2}\big]^{\frac{\alpha_1}{M_3}
}.$$
\end{proposition}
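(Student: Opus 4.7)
The plan is to combine the ball inflation (Theorem \ref{nt4} with $k=1$, $\rho=\delta$, at Lebesgue index $pk/n=p/3$) with log-convexity interpolation between $L^2$ and $L^{2p/3}$. Here $\alpha_1$ is exactly the interpolation exponent that realizes $L^{p/3}$ as an interpolate between those two endpoints, and $L^{p/3}$ is precisely the Lebesgue index at which the $k=1$ ball inflation inequality is available in dimension $n=3$; the hypothesis $p\ge 2n=6$ is met.

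First, since $p\ge 6$, Jensen's inequality applied to the probability measure $w_{B^1}\,dx/\int w_{B^1}$ (using $\int w_{B^1}\sim |B^1|$) yields $\|E_{J_{i,1}}g\|_{L^2_\sharp(w_{B^1})}\lesssim \|E_{J_{i,1}}g\|_{L^{p/3}_\sharp(w_{B^1})}$ for each $J_{i,1}$; this upgrades the left-hand side of \eqref{rovgoprty8ut9v0vir890g76ry890u} from $L^2_\sharp$ to $L^{p/3}_\sharp$ on each small ball $B^1$. Next, apply Theorem \ref{nt4} with $n=3$, $k=1$, $\rho=\delta$, turning the average over the balls $B^1$ of radius $\delta^{-1}$ into a single expression on the ball $B^2$ of radius $\delta^{-2}$, at the cost of a factor $\delta^{-\epsilon}$ and still at the $L^{p/3}_\sharp$ level. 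Then, on $B^2$, Lyapunov's inequality (equivalently, log-convexity of weighted averaged $L^r$-norms via Jensen on $w_{B^2}\,dx/\int w_{B^2}$) gives for each $J_{i,1}$
$$\|E_{J_{i,1}}g\|_{L^{p/3}_\sharp(w_{B^2})}\le \|E_{J_{i,1}}g\|_{L^2_\sharp(w_{B^2})}^{1-\alpha_1}\|E_{J_{i,1}}g\|_{L^{2p/3}_\sharp(w_{B^2})}^{\alpha_1},$$
which is exactly the identity defining $\alpha_1$. Hölder in the $\ell^2$-sum over $J_{i,1}$ (with exponents $1/(1-\alpha_1)$ and $1/\alpha_1$) produces
$$\bigl(\sum_{J_{i,1}}\|E_{J_{i,1}}g\|_{L^{p/3}_\sharp(w_{B^2})}^2\bigr)^{1/2}\le \bigl(\sum_{J_{i,1}}\|E_{J_{i,1}}g\|_{L^2_\sharp(w_{B^2})}^2\bigr)^{\frac{1-\alpha_1}{2}}\bigl(\sum_{J_{i,1}}\|E_{J_{i,1}}g\|_{L^{2p/3}_\sharp(w_{B^2})}^2\bigr)^{\frac{\alpha_1}{2}},$$
and this factorization passes through the product over the $M_3$ transversal indices $i$. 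Finally, apply $L^2$-decoupling (Lemma \ref{nl3}) on the ball $B^2$ of radius $\delta^{-2}$ to decouple each $\delta$-interval into $\delta^2$-subintervals in the $L^2$ factor, which produces the $\delta^2$ scale in the first bracket of \eqref{rovgoprty8ut9v0vir890g76ry890u}.

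The main subtlety is purely bookkeeping: one has to ensure that Hölder and the $L^2$-decoupling pass cleanly through the outer product over the transversal indices and the outer averaging over $B^1$, and that the weighted averaged norms $\|\cdot\|_{L^r_\sharp(w_B)}$ behave as genuine norms up to universal constants (since $w_B$ is not literally a probability density, only integrating to a quantity comparable to $|B|$). None of these points is deep, but the order of operations has to be tracked carefully so that the $\delta^{-\epsilon}$ loss only enters once, through the invocation of Theorem \ref{nt4}.
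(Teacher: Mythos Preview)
Your proof is correct and follows essentially the same route as the paper: upgrade $L^2_\sharp$ to $L^{p/3}_\sharp$ on the small balls via Jensen, apply the $k=1$ ball inflation (Theorem \ref{nt4}) to pass to $B^2$, interpolate $L^{p/3}_\sharp$ between $L^2_\sharp$ and $L^{2p/3}_\sharp$ via H\"older/log-convexity, and finish with $L^2$ decoupling (Lemma \ref{nl3}) to reach the $\delta^2$ scale. The paper compresses your Lyapunov step and the $\ell^2$-H\"older into a single ``H\"older's inequality'', but the content is identical.
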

\begin{proof}
Since $p\ge 6$, we have that $$\|E_{J_{i,1}}g\|_{L^{2}_{\sharp}(w_{B^1})}\le \|E_{J_{i,1}}g\|_{L^{\frac{p}{3}}_{\sharp}(w_{B^1})}.$$
Using this and  Theorem \ref{nt4} with $k=1$,  the term \eqref{rovgoprty8ut9v0vir890g76ry890u} is now controlled by
$$
\lesssim_{\epsilon,K}\delta^{-\epsilon} (\prod_{i=1}^{M_3}\sum_{J_{i,1}\subset I_i\atop{|J_{i,1}|=\delta}}\|E_{J_{i,1}}g\|_{L_{\sharp}^{\frac{p}3}(w_{B^2})}^2)^{\frac1{2M_3}}$$
Using  H\"older's inequality we can dominate this by
$$\le (\prod_{i=1}^{M_3}\sum_{J_{i,1}\subset I_i\atop{|J_{i,1}|=\delta}}\|E_{J_{i,1}}g\|_{L_{\sharp}^{2}(w_{B^2})}^2)^{\frac{1-\alpha_1}{2M_3}}(\prod_{i=1}^{M_3}\sum_{J_{i,1}\subset I_i\atop{|J_{i,1}|=\delta}}\|E_{J_{i,1}}g\|_{L_{\sharp}^{\frac{2p}3}(w_{B^2})}^2)^{\frac{\alpha_1}{2M_3}}.$$
It suffices now to apply $L^2$ decoupling via Lemma \ref{nl3} to the first term from above.

\end{proof}
\medskip

The sequence of inequalities from Proposition \ref{np4} can be summarized as follows
$$A_p(1,B^2,1)\lesssim_{\epsilon,K}\delta^{-\epsilon} D_{\frac{p}{3}}(1,B^2)\lesssim_{\epsilon,K}\delta^{-\epsilon}$$
\begin{equation}
\label{ne6}
\times A_p(2,B^2,2)^{1-\alpha_1}
\end{equation}
\begin{equation}
\label{ne6'}
\times D_{\frac{2p}{3}}(1,B^2)^{\alpha_1}.
\end{equation}
We have so far performed one ball inflation, replacing $B^1$ with $B^2$. A few preliminary remarks are in order. Note that the terms of type $A_p(u,B,v)$ have $u=v$. This symmetry will always be preserved throughout the iteration and will facilitate the transition from Proposition \ref{np6} to Theorem \ref{nt7}. The reason we build our iteration scheme around the pivotal terms  $A_p(v,B,v)$ may seem rather subtle at this point, but we hope it will become more transparent throughout the argument. Suffices to say that we could have alternatively worked with other pivotal terms (see for comparison our argument in \cite{BD4}), however the use of $A_p(v,B,v)$ will make the $L^2$ decoupling rather streamlined.

\medskip

We will illustrate \eqref{ne6}, \eqref{ne6'}  using the following tree.
\bigskip

\centerline
{\bf Figure 1}
$$
\begin{CD}
A_p (1, B^1, 1)\\
 @VVV @.\\
D_{\frac p3} (1, B^2)\\
^{1-\alpha_1}\!\!\swarrow \qquad\quad \searrow^{\!\!\alpha_1}\\[-4pt]
A_p(2, B^2, 2) \qquad\qquad D_{\frac {2p}3} (1, B^2)
\end{CD}
$$
\medskip

In the next stage we process the term \eqref{ne6'}. We proceed with a second ball inflation, shifting from  balls $B^2$ to balls $B^3$ and invoking the multilinear Kakeya-type inequality \eqref{ne3} with $k=2$. We continue with a lower dimensional decoupling followed by $L^2$ decoupling.  The term \eqref{ne6} will not undergo any serious modification, it will simply become $A_p(2,B^3,2)^{1-\alpha_1}$.
For $p\ge 9$ let $\alpha_2,\beta_2\in [0,1]$ satisfy
$$\frac1{\frac{2p}3}=\frac{1-\alpha_2}{6}+\frac{\alpha_2}{p},$$
$$\frac16=\frac{1-\beta_2}2+\frac{\beta_2}{\frac{2p}3}.$$

\begin{proposition}
\label{np3}
Fix a ball $B^3\subset \R^3$ with radius $\delta^{-3}$. Let $\B_2(B^3)$ be a finitely overlapping cover of $B^3$ with balls $B^2$. For each $B^2$, let $\B_1(B^2)$ be a finitely overlapping cover of $B^2$ with balls $B^1$. Define
$$\B_1(B^3)=\{B^1:\;B_1\in\B_1(B^2),\;B^2\in\B_2(B^3)\}.$$
Then
$$\big(\frac1{|\B_1(B^3)|}\sum_{B^1\in\B_1(B^3)}\big[\prod_{i=1}^{M_3}(\sum_{J_{i,1}\subset I_i\atop{|J_{i,1}|=\delta}}\|E_{J_{i,1}}g\|_{L^{2}_{\sharp}(w_{B^1})}^2)^{1/2}\big]^{\frac{p}{M_3}
}\big)^{\frac1p}\lesssim_{\epsilon,K} \delta^{-\epsilon}$$
$$\times\big(\frac1{|\B_2(B^3)|}\sum_{B^2\in\B_2(B^3)}\big[\prod_{i=1}^{M_3}(\sum_{J_{i,2}\subset I_i\atop{|J_{i,2}|=\delta^2}}\|E_{J_{i,2}}g\|_{L^{2}_{\sharp}(w_{B^2})}^2)^{1/2}\big]^{\frac{p}{M_3}
}\big)^{\frac{1-\alpha_1}p}$$
$$\times \big[\prod_{i=1}^{M_3}(\sum_{J_{i,3}\subset I_i\atop{|J_{i,3}|=\delta^3}}\|E_{J_{i,3}}g\|_{L^{2}_{\sharp}(w_{B^3})}^2)^{1/2}\big]^{\frac{\alpha_1(1-\alpha_2)(1-\beta_2)}{M_3}}$$

$$\times \big[\prod_{i=1}^{M_3}(\sum_{J_{i,\frac32}\subset I_i\atop{|J_{i,\frac32}|=\delta^{\frac32}}}\|E_{J_{i,{\frac32}}}g\|_{L^{\frac{2p}3}_{\sharp}(w_{B^3})}^2)^{1/2}\big]^{\frac{\alpha_1(1-\alpha_2)\beta_2}{M_3}
}$$
$$\times \big[\prod_{i=1}^{M_3}(\sum_{J_{i,1}\subset I_i\atop{|J_{i,1}|=\delta}}\|E_{J_{i,1}}g\|_{L^p_{\sharp}(w_{B^3})}^2)^{1/2}\big]^{\frac{\alpha_1\alpha_2}{M_3}
}.$$
\end{proposition}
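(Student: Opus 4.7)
The plan is to perform a second round of ball inflation (from $B^2$ to $B^3$) on top of Proposition \ref{np4}, and then distribute the loss via two successive H\"older interpolations, interleaved with a lower dimensional decoupling and an $L^2$ decoupling. I expect the main obstacle to be bookkeeping the exponents so that the two interpolation parameters $\alpha_2,\beta_2$ get composed with $\alpha_1$ in the correct nested way.

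First, I would apply Proposition \ref{np4} for each $B^2\in\B_2(B^3)$ to its cover $\B_1(B^2)$. This yields, for every such $B^2$,
\[
A_p(1,B^2,1)\;\lesssim_{\epsilon,K}\;\delta^{-\epsilon}\,A_p(2,B^2,2)^{1-\alpha_1}\,D_{\frac{2p}{3}}(1,B^2)^{\alpha_1}.
\]
Raising this to the $p$th power, averaging over $B^2\in\B_2(B^3)$, and using H\"older with exponents $\frac{1}{1-\alpha_1}$ and $\frac{1}{\alpha_1}$ in the $B^2$ average gives
\[
A_p(1,B^3,1)\;\lesssim_{\epsilon,K}\;\delta^{-\epsilon}\,A_p(2,B^3,2)^{1-\alpha_1}\,\Bigl(\tfrac1{|\B_2(B^3)|}\sum_{B^2}D_{\frac{2p}{3}}(1,B^2)^{p}\Bigr)^{\alpha_1/p}.
\]

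Next, I would perform the ball inflation $B^2\to B^3$ on the second factor. Apply Theorem \ref{nt4} with $n=3$, $k=2$, $\rho=\delta$ (so the outer ball has radius $\delta^{-3}$ and the inner balls radius $\delta^{-2}$), and Lebesgue exponent $pk/n=2p/3$; the hypothesis $p\ge 2n=6$ is amply satisfied since $p\ge 9$. This converts the average into
\[
\Bigl(\tfrac1{|\B_2(B^3)|}\sum_{B^2}D_{\frac{2p}{3}}(1,B^2)^{p}\Bigr)^{1/p}\;\lesssim_{\epsilon,K}\;\delta^{-\epsilon}\,D_{\frac{2p}{3}}(1,B^3).
\]
At this point the bound reads
\[
A_p(1,B^3,1)\;\lesssim_{\epsilon,K}\;\delta^{-\epsilon}\,A_p(2,B^3,2)^{1-\alpha_1}\,D_{\frac{2p}{3}}(1,B^3)^{\alpha_1},
\]
so it remains to process $D_{\frac{2p}{3}}(1,B^3)$ into the three promised factors.

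I would now use H\"older on the inner Lebesgue norms. By the definition of $\alpha_2$, $L^{2p/3}$ interpolates between $L^{6}$ and $L^{p}$; applying this pointwise to $\|E_{J_{i,1}}g\|$ and then Cauchy--Schwarz inside the $\ell^2$-sum over $J_{i,1}$, and finally H\"older across the $M_3$ factors, yields
\[
D_{\frac{2p}{3}}(1,B^3)\;\le\;D_{6}(1,B^3)^{1-\alpha_2}\,D_{p}(1,B^3)^{\alpha_2}.
\]
The factor $D_{p}(1,B^3)^{\alpha_1\alpha_2}$ is already of the desired form.

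Finally I would handle $D_{6}(1,B^3)$ in two substeps. Invoke lower dimensional decoupling (Lemma \ref{nl5}) with $n=k=3$, $\sigma=\delta$, $R=\delta^{-3}$, decoupling each $\delta$-interval into $\delta^{3/2}$-intervals; the cost is $V_{6,2}(\delta)\lesssim_\epsilon\delta^{-\epsilon}$ by \eqref{ne50}, giving
\[
D_{6}(1,B^3)\;\lesssim_\epsilon\;\delta^{-\epsilon}\,D_{6}(\tfrac32,B^3).
\]
Then by the definition of $\beta_2$, $L^{6}$ interpolates between $L^{2}$ and $L^{2p/3}$, so the same H\"older-inside-$\ell^2$ maneuver as before gives
\[
D_{6}(\tfrac32,B^3)\;\le\;D_{2}(\tfrac32,B^3)^{1-\beta_2}\,D_{\frac{2p}{3}}(\tfrac32,B^3)^{\beta_2}.
\]
Finally, $L^2$ decoupling (Lemma \ref{nl3}) passes the $\delta^{3/2}$-intervals down to $\delta^{3}=R^{-1}$-intervals at no cost:
\[
D_{2}(\tfrac32,B^3)\;\lesssim\;D_{2}(3,B^3).
\]
Composing these bounds, raising to the appropriate exponent $\alpha_1(1-\alpha_2)$, and multiplying through produces exactly the four factors
\[
A_p(2,B^3,2)^{1-\alpha_1},\quad D_2(3,B^3)^{\alpha_1(1-\alpha_2)(1-\beta_2)},\quad D_{\frac{2p}{3}}(\tfrac32,B^3)^{\alpha_1(1-\alpha_2)\beta_2},\quad D_p(1,B^3)^{\alpha_1\alpha_2}
\]
with a total $\delta^{-\epsilon}$ loss, completing the proof.
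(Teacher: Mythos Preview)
Your proposal is correct and follows the paper's proof essentially verbatim: the same four stages (average Proposition \ref{np4} over $\B_2(B^3)$ and apply H\"older plus Theorem \ref{nt4} with $k=2$; H\"older to split $D_{2p/3}$ between $D_6$ and $D_p$; lower-dimensional decoupling via Lemma \ref{nl5} and \eqref{ne50}; H\"older again plus $L^2$ decoupling via Lemma \ref{nl3}) appear in the same order with the same exponents.
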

\begin{proof}
There are four different stages in the argument.
\medskip

In the first one we average inequality \eqref{ne6}-\eqref{ne6'} raised to the power $p$ over all balls $B^2\in\B_2(B^3)$. Then use H\"older's inequality and Theorem \ref{nt4} with $k=2$ to get
$$A_p(1,B^3,1)^p\lesssim_{\epsilon,K} $$$$\delta^{-\epsilon}A_p(2,B^3,2)^{p(1-\alpha_1)}\big[ \frac1{|\B_2(B^3)|}\sum_{B^2\in\B_2(B^3)}D_{\frac{2p}{3}}(1,B^2)^p\big]^{\alpha_1}\lesssim_{\epsilon,K}$$$$\delta^{-\epsilon}
A_p(2,B^3,2)^{p(1-\alpha_1)} D_{\frac{2p}{3}}(1,B^3)^{p\alpha_1}.$$
We thus can write
\begin{equation}
\label{ne7}
A_p(1,B^3,1)\lesssim_{\epsilon,K}\delta^{-\epsilon}A_p(2,B^3,2)^{1-\alpha_1} D_{\frac{2p}{3}}(1,B^3)^{\alpha_1}.
\end{equation}
The second step is simply H\"older's inequality
\begin{equation}
\label{ne8}
D_{\frac{2p}{3}}(1,B^3)\le D_6(1,B^3)^{1-\alpha_2}D_{p}(1,B^3)^{\alpha_2}.
\end{equation}
To justify the use of $L^6$ we move to the third stage of the argument. The key observation is that the $\delta^3-$neighborhood of each arc $\Gamma_J$ on the curve $(t,t^2,t^3)$ with $|J|=\delta$ is essentially a $\delta^3-$neighborhood of an arc of similar length on a rigid motion of a parabola $(t,t^2)$. By applying lower dimensional decoupling, Lemma \ref{nl5} with $n=k=3$ and $p=6$,  we can write (recalling \eqref{ne50})
\begin{equation}
\label{ne9}
D_6(1,B^3)\lesssim_\epsilon \delta^{-\epsilon}D_6(\frac32,B^3).\end{equation}
In the fourth stage of the argument we use H\"older
\begin{equation}
\label{ne10}
D_6(\frac32,B^3)\le D_2(\frac32,B^3)^{1-\beta_2}D_{\frac{2p}{3}}(\frac32,B^3)^{\beta_2}
\end{equation}
and $L^2$ decoupling
\begin{equation}
\label{ne11}
D_2(\frac32,B^3)\lesssim D_{2}(3,B^3).
\end{equation}
It suffices now to combine \eqref{ne7}--\eqref{ne11}.

\end{proof}
We can summarize the new inequality as follows
$$A_p(1,B^3,1)\lesssim_{\epsilon,K}\delta^{-\epsilon}\times$$
\begin{equation}\label{ne12}
A_p(2,B^3,2)^{1-\alpha_1}\times\end{equation}\begin{equation}
\label{ne14}A_p(3,B^3,3)^{\alpha_1(1-\alpha_2)(1-\beta_2)}\times\end{equation}\begin{equation}
\label{ne15}D_{\frac{2p}{3}}(\frac32,B^3)^{\alpha_1(1-\alpha_2)\beta_2}\times\end{equation}\begin{equation}
\label{ne16}D_p(1,B^3)^{\alpha_1\alpha_2}.
\end{equation}
Let us briefly compare this with its previous incarnation, \eqref{ne6}-\eqref{ne6'}.
Note that the term \eqref{ne6} has become \eqref{ne12} without undergoing any processing. Most importantly, the term \eqref{ne6'} has reincarnated into the three terms \eqref{ne14}--\eqref{ne16}, via the use of ball inflation, $L^2$ decoupling and lower dimensional decoupling.
We will illustrate this new inequality  using the following tree.

\medskip
\centerline
{\begin{minipage}{4.9in} 
\centerline
{\includegraphics[height=3.2in]{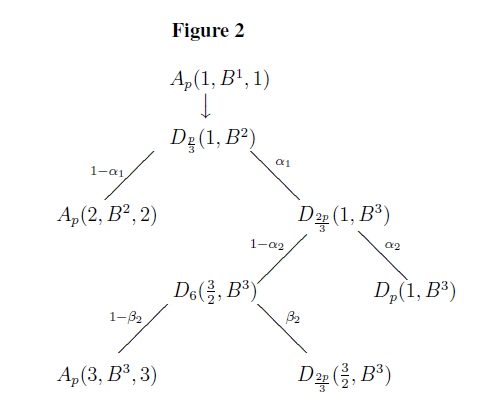} }
\end{minipage}}

\medskip

\bigskip

The four leaves in the tree correspond to the terms \eqref{ne12}--\eqref{ne16}, and the weights are multiplied along the edges to create the exponents in \eqref{ne12}-\eqref{ne16}.
\bigskip

This completes the basic step of our iteration scheme. So far we have performed two ball inflations. The original one was by a (logarithmic) factor of 2, from $B^1$ to $B^2$. We will not perform such an inflation again. The second one was by a factor of $\frac32$, from $B^2$ to $B^3$. This type of inflation will appear in each step of the iteration.

Let us explain how to iterate one more time. We increase again the size of the ball, by replacing $B^3$ with $B^{\frac32\cdot3}$. We sum \eqref{ne12}--\eqref{ne16} raised to the power $p$ over a finitely overlapping cover $\B_3(B^{\frac92})$ of $B^{\frac92}$ with balls $B^{3}$, and proceed exactly as in the proof of Proposition \ref{np3} to get
$$A_p(1,B^{\frac92},1)^p\lesssim_{\epsilon,K}\delta^{-\epsilon}\times$$
\begin{equation}\label{ne17}
A_p(2,B^{\frac92},2)^{p(1-\alpha_1)}\times\end{equation}\begin{equation}
\label{ne18} A_p(3,B^{\frac92},3)^{p\alpha_1(1-\alpha_2)(1-\beta_2)}\times\end{equation}\begin{equation}
\label{ne19}\big[\frac1{|\B_3(B^{\frac92})|}\sum_{B^3\in\B_3(B^{\frac92})}D_{\frac{2p}{3}}(\frac32,B^3)^p\big]^{\alpha_1(1-\alpha_2)\beta_2}\times\end{equation}\begin{equation}
\label{ne20}\big[\frac1{|\B_3(B^{\frac92})|}\sum_{B^3\in\B_3(B^{\frac92})}D_p(1,B^3)^p\big]^{\alpha_1\alpha_2}.
\end{equation}
By using \eqref{j nbyrtug8rt7g856uy956y9} for the term \eqref{ne20} and Theorem \ref{nt4} with $k=2$ for \eqref{ne19}, we can further  dominate the above by
\begin{equation}\label{ne21}
A_p(2,B^{\frac92},2)^{p(1-\alpha_1)}\times\end{equation}\begin{equation}
\label{ne22}A_p(3,B^{\frac92},3)^{p\alpha_1(1-\alpha_2)(1-\beta_2)}\times\end{equation}\begin{equation}
\label{ne23}D_{\frac{2p}{3}}(\frac32,B^{\frac92})^{p\alpha_1(1-\alpha_2)\beta_2}\times\end{equation}\begin{equation}
\label{ne24}D_p(1,B^{\frac92})^{p\alpha_1\alpha_2}.
\end{equation}
The only term that needs further processing is \eqref{ne23}, and this is done following the argument in stages 2-4 of the proof of Proposition \ref{np3}. More exactly, we can write
$$
D_{\frac{2p}{3}}(\frac32,B^{\frac92})\lesssim_{\epsilon}\delta^{-\epsilon}\times
$$
\begin{equation}
\label{ne25}
A_p(\frac92,B^{\frac92},\frac92)^{(1-\alpha_2)(1-\beta_2)}\times
\end{equation}\begin{equation}
\label{ne26}
D_{\frac{2p}{3}}(\frac94,B^{\frac92})^{(1-\alpha_2)\beta_2}\times
\end{equation}
\begin{equation}
\label{ne27}
D_p(\frac32,B^{\frac92})^{\alpha_2}.
\end{equation}
Putting together \eqref{ne17}--\eqref{ne27} we arrive at the following new version of our main inequality
$$A_p(1,B^{\frac92},1)\lesssim_{\epsilon,K}\delta^{-\epsilon}\times$$
\begin{equation*}
A_p(2,B^{\frac92},2)^{1-\alpha_1}\times\end{equation*}\begin{equation*}
A_p(3,B^{\frac92},3)^{\alpha_1(1-\alpha_2)(1-\beta_2)}\times\end{equation*}
\begin{equation*}
A_p(\frac92,B^{\frac92},\frac92)^{(1-\alpha_2)(1-\beta_2)\alpha_1(1-\alpha_2)\beta_2}\times
\end{equation*}
\begin{equation*}
D_{\frac{2p}{3}}(\frac94,B^{\frac92})^{(1-\alpha_2)\beta_2\alpha_1(1-\alpha_2)\beta_2}\times
\end{equation*}
\begin{equation*}
D_p(1,B^{\frac92})^{\alpha_1\alpha_2}\times
\end{equation*}
\begin{equation*}
D_p(\frac32,B^{\frac92})^{\alpha_2\alpha_1(1-\alpha_2)\beta_2}.
\end{equation*}

This completes the second iteration. The tree associated with this inequality is as follows. The six terms on the right hand side of the inequality from above correspond to the six leaves of the tree. There are additional features of this  tree that will be discussed in the end of this subsection.
\medskip

\centerline
{\begin{minipage}{4.9in} 
\centerline
{\includegraphics[height=4.2in]{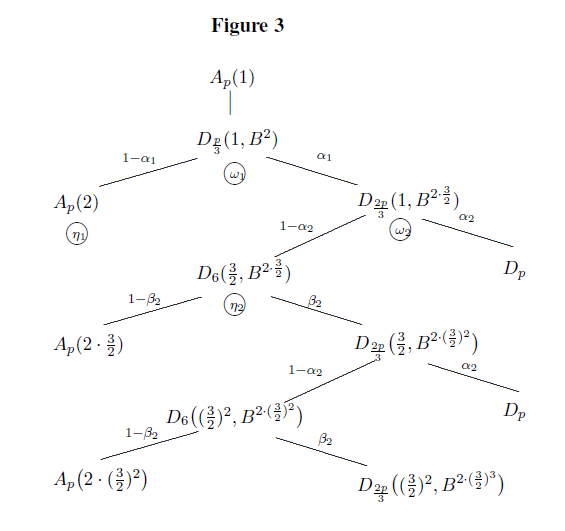} }
\end{minipage}}

\medskip

We can iterate this further, by increasing each time the exponent of the radius of the ball with a factor of $\frac32$, and processing the term $D_{\frac{2p}{3}}$ using ball inflation, lower dimensional decoupling and $L^2$ decoupling. After $r$ iterations we get the following inequality
$$
A_p(1,B,1)\lesssim_{\epsilon,K,r}\delta^{-\epsilon}A_p(2,B,2)^{1-\alpha_1}\prod_{i=1}^rA_p(2(\frac32)^i,B,2(\frac32)^i)^{\alpha_1(1-\alpha_2)(1-\beta_2)[(1-\alpha_2)\beta_2]^{i-1}}$$
$$\times D_{\frac{2p}3}((\frac32)^r,B)^{\alpha_1[(1-\alpha_2)\beta_2]^r}$$
$$\times\prod_{i=0}^{r-1}D_p((\frac32)^i,B)^{\alpha_1\alpha_2[(1-\alpha_2)\beta_2]^i},
$$
for each ball $B\subset \R^3$ with radius $\delta^{-2(\frac32)^r}$ and $ p\ge 9$.
\bigskip

Next, invoke H\"older to argue that
$$D_{\frac{2p}3}((\frac32)^r,B)\lesssim D_{p}((\frac32)^r,B),$$
and use this inequality to write
$$
A_p(1,B,1)\lesssim_{\epsilon,K,r}\delta^{-\epsilon}A_p(2,B,2)^{1-\alpha_1}\prod_{i=1}^rA_p(2(\frac32)^i,B,2(\frac32)^i)^{\alpha_1(1-\alpha_2)(1-\beta_2)[(1-\alpha_2)\beta_2]^{i-1}}$$
$$\times\prod_{i=0}^{r-1}D_p((\frac32)^i,B)^{\alpha_1\alpha_2[(1-\alpha_2)\beta_2]^i}D_{p}((\frac32)^r,B)^{\alpha_1[(1-\alpha_2)\beta_2]^r}
.$$

Note that by raising this to the power $p$ and summing over finitely overlapping families of balls, the above inequality holds in fact for all balls $B$ of radius at least $\delta^{-2(\frac32)^r}$. By renaming the variable $\delta$, we can rewrite the new inequality as follows.
\begin{proposition}
\label{np5}
Let $p>9$.
Let $u>0$ be such that
$$(\frac32)^ru\le 1.$$
Then for each ball $B^3$ of radius $\delta^{-3}$ we have
$$
A_p(u,B^3,u)\lesssim_{\epsilon,K,r}\delta^{-\epsilon}D_{p}((\frac32)^ru,B^3)^{\alpha_1[(1-\alpha_2)\beta_2]^r}\prod_{i=0}^{r-1}D_p((\frac32)^iu,B^3)^{\alpha_1\alpha_2[(1-\alpha_2)\beta_2]^i}\times
$$
$$A_p(2u,B^3,2u)^{1-\alpha_1}\prod_{i=1}^rA_p(2(\frac32)^iu,B^3,2(\frac32)^iu)^{\alpha_1(1-\alpha_2)(1-\beta_2)[(1-\alpha_2)\beta_2]^{i-1}}.$$
\end{proposition}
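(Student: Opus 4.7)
The plan is to prove the statement by induction on $r$, iterating the four-stage chain of estimates already carried out in the proof of Proposition \ref{np3} and once more in the display preceding the statement. The base case $r=0$ is Proposition \ref{np4}, which, after identifying $A_p(2,B,2)=D_2(2,B)$, reads
$$
A_p(1,B,1)\lesssim_{\epsilon,K}\delta^{-\epsilon}\,A_p(2,B,2)^{1-\alpha_1}\,D_{2p/3}(1,B)^{\alpha_1}
$$
on any ball $B$ of radius at least $\delta^{-2}$.

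The inductive hypothesis at step $j$ asserts that for any ball $B$ of radius at least $\delta^{-2(3/2)^j}$,
\begin{align*}
A_p(1,B,1)\lesssim_{\epsilon,K,j}\delta^{-\epsilon}\,&A_p(2,B,2)^{1-\alpha_1}\prod_{i=1}^{j}A_p(2(3/2)^i,B,2(3/2)^i)^{\alpha_1(1-\alpha_2)(1-\beta_2)[(1-\alpha_2)\beta_2]^{i-1}}\\
&\times\prod_{i=0}^{j-1}D_p((3/2)^i,B)^{\alpha_1\alpha_2[(1-\alpha_2)\beta_2]^i}\cdot D_{2p/3}((3/2)^j,B)^{\alpha_1[(1-\alpha_2)\beta_2]^j}.
\end{align*}
To pass from $j$ to $j+1$, I would raise this to the $p$-th power, average over a finitely overlapping cover of a ball of radius $\delta^{-2(3/2)^{j+1}}$ by balls of radius $\delta^{-2(3/2)^j}$, and process each factor exactly as in Proposition \ref{np3}: the $A_p$ factors need no work, since they are already averages that only shrink upon embedding into the larger ball; the $D_p$ factors are propagated to the larger ball via Theorem \ref{nt4} with $k=2$; and the surviving $D_{2p/3}((3/2)^j,\cdot)$ factor is reprocessed through the identical four-stage chain, namely inflated via Theorem \ref{nt4} with $k=2$, split by Hölder into $D_6^{1-\alpha_2}D_p^{\alpha_2}$, passed to the finer scale $(3/2)^{j+1}$ by lower dimensional decoupling (Lemma \ref{nl5} with $n=k=3$, $p=6$, using the two-dimensional input $V_{6,2}(\delta)\lesssim_\epsilon\delta^{-\epsilon}$ from \cite{BD3}), split by Hölder into $D_2^{1-\beta_2}D_{2p/3}^{\beta_2}$, and finally $D_2$ is converted into $A_p(2(3/2)^{j+1},\cdot,2(3/2)^{j+1})$ by the $L^2$-decoupling of Lemma \ref{nl3}. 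The exponents multiply along this chain exactly as encoded in the trees drawn in the excerpt, so the new residual $D_{2p/3}$ factor inherits the compound power $\alpha_1[(1-\alpha_2)\beta_2]^{j+1}$, while the freshly produced $A_p$ and $D_p$ factors inherit $\alpha_1(1-\alpha_2)(1-\beta_2)[(1-\alpha_2)\beta_2]^j$ and $\alpha_1\alpha_2[(1-\alpha_2)\beta_2]^j$ respectively, closing the induction.

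After $r$ iterations I would use Hölder $\|\cdot\|_{L^{2p/3}_\sharp}\le\|\cdot\|_{L^p_\sharp}$ to absorb $D_{2p/3}((3/2)^r,B)$ into $D_p((3/2)^r,B)$, yielding the claimed inequality on balls $B$ of radius at least $\delta^{-2(3/2)^r}$. To cast it in the form stated in the proposition, with balls of radius $\delta^{-3}$ and the free parameter $u$, I would rescale the scale variable by setting $\delta_0=\delta^u$ and running the iterated inequality with $\delta_0$ in place of $\delta$. The hypothesis $(3/2)^r u\le 1$ gives $2u(3/2)^r\le 2<3$, so a ball of radius $\delta^{-3}=\delta_0^{-3/u}$ has radius at least $\delta_0^{-2(3/2)^r}$ and is admissible. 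Every scale $\delta_0^{(3/2)^i}$ turns into $\delta^{u(3/2)^i}$ and every $\delta_0^{2(3/2)^i}$ turns into $\delta^{2u(3/2)^i}$, producing exactly the scales appearing in the statement. The main obstacle is not analytic but combinatorial: one must verify that the compound exponent on the terminal residual factor is $\alpha_1[(1-\alpha_2)\beta_2]^r$ rather than something slightly different, and that the fresh $A_p$ and $D_p$ factors at step $j+1$ pick up exactly the exponent the Hölder/decoupling chain prescribes; this is transparent from the tree structure spelled out for $r=1,2$.
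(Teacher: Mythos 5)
Your proposal follows the paper's own route: the paper obtains Proposition \ref{np5} precisely by iterating the chain of Proposition \ref{np3} $r$ times (your induction on $r$ is just a formalization of the paper's ``iterate this further'' step), absorbing the residual $D_{\frac{2p}3}((\frac32)^r,B)$ factor into $D_p$ by H\"older, upgrading to all balls of radius at least $\delta^{-2(\frac32)^r}$ by raising to the power $p$ and averaging, and finally renaming $\delta$ as $\delta^u$, exactly as you describe. One small correction: the averaged $D_p$ factors cannot be propagated to the larger ball by Theorem \ref{nt4} with $k=2$ (for $n=3$ that theorem concerns $L^{pk/n}=L^{2p/3}$ norms, not $L^p$); the paper instead uses the elementary $k=n$ analogue \eqref{j nbyrtug8rt7g856uy956y9} (Minkowski plus H\"older) for those terms, reserving Theorem \ref{nt4} with $k=2$ for the $D_{\frac{2p}3}$ factor.
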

To simplify notation, we will write
$$\gamma_0=1-\alpha_1$$
$$\gamma_i=\alpha_1(1-\alpha_2)(1-\beta_2)[(1-\alpha_2)\beta_2]^{i-1},\;1\le i\le r$$
$$b_i=2(\frac32)^{i},\;0\le i\le r.$$
We will also use the trivial fact that
\begin{equation}
\label{ne42}
D_p((\frac32)^iu,B^3)\le V_p(\delta)D_p(1,B^3),\;0\le i\le r.
\end{equation}
Note that this inequality involves no rescaling.
Some elementary computations show that the inequality in Proposition \ref{np5} can now be rewritten as follows.
\begin{proposition}
\label{np6}
Let $p>9$.
Let $u>0$ be such that
$$(\frac32)^ru\le 1.$$
Then for each ball $B^3$ of radius $\delta^{-3}$ we have
$$
A_p(u,B^3,u)\lesssim_{\epsilon,K,r}\delta^{-\epsilon}V_p(\delta)^{1-\sum_{0}^r\gamma_j}D_p(1,B^3)^{1-\sum_{0}^r\gamma_j}\times
$$
$$\prod_{j=0}^rA_p(b_ju,B^3,b_ju)^{\gamma_j}.$$
\end{proposition}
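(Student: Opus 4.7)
The plan is to derive Proposition \ref{np6} directly from Proposition \ref{np5} via two purely mechanical operations: applying the trivial bound \eqref{ne42} to each $D_p$-factor, and verifying an algebraic identity for the resulting exponent on $V_p(\delta)D_p(1,B^3)$. No additional geometric or analytic input is required, since the $A_p$-factors appearing in Proposition \ref{np5} already have exactly the form $A_p(2(\tfrac32)^i u, B^3, 2(\tfrac32)^i u)^{\alpha_1(1-\alpha_2)(1-\beta_2)[(1-\alpha_2)\beta_2]^{i-1}}$ (for $1\le i\le r$) together with the $i=0$ factor $A_p(2u,B^3,2u)^{1-\alpha_1}$, matching the $A_p(b_j u,B^3,b_j u)^{\gamma_j}$ factors in the target inequality.

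First I would apply \eqref{ne42} to every factor $D_p((\tfrac32)^i u, B^3)$, $0\le i\le r$, replacing it with $V_p(\delta) D_p(1,B^3)$ raised to the same exponent. The total exponent on the product $V_p(\delta)\,D_p(1,B^3)$ then equals
$$E \;:=\; \alpha_1[(1-\alpha_2)\beta_2]^r \;+\; \sum_{i=0}^{r-1}\alpha_1\alpha_2[(1-\alpha_2)\beta_2]^i.$$
It remains to show $E = 1-\sum_{j=0}^r \gamma_j$.

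Set $\rho = (1-\alpha_2)\beta_2$ for brevity. Summing the geometric series gives $E = \alpha_1\rho^r + \alpha_1\alpha_2 \tfrac{1-\rho^r}{1-\rho}$, while
$$\sum_{j=0}^r \gamma_j \;=\; (1-\alpha_1) \;+\; \alpha_1(1-\alpha_2)(1-\beta_2)\,\tfrac{1-\rho^r}{1-\rho}.$$
So the desired identity $E = 1-\sum_j \gamma_j$ reduces to $\rho^r + \tfrac{1-\rho^r}{1-\rho}\bigl[\alpha_2 + (1-\alpha_2)(1-\beta_2)\bigr] = 1$. The bracketed quantity equals $\alpha_2 + (1-\alpha_2) - (1-\alpha_2)\beta_2 = 1-\rho$, which collapses the expression to $\rho^r + (1-\rho^r) = 1$, completing the verification.

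I do not anticipate any real obstacle here; the step is essentially a bookkeeping repackaging. The only point requiring care is to track the exponents through the geometric series, in particular to notice the cancellation $\alpha_2 + (1-\alpha_2)(1-\beta_2) = 1-(1-\alpha_2)\beta_2$, which is exactly what allows the sum of the $\gamma_j$'s together with $E$ to telescope to $1$. Once this is observed, the proposition follows immediately from Proposition \ref{np5} and \eqref{ne42}.
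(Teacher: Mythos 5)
Your proposal is correct and is exactly the route the paper takes: the paper derives Proposition \ref{np6} from Proposition \ref{np5} and \eqref{ne42} by "elementary computations," and your geometric-series verification of $E = 1-\sum_{j=0}^r\gamma_j$ (via the cancellation $\alpha_2+(1-\alpha_2)(1-\beta_2)=1-(1-\alpha_2)\beta_2$) is precisely the computation the paper leaves implicit.
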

Note that $1-\sum_{j=0}^r\gamma_j>0$, since all terms of type $D_p$ from Proposition \ref{np5} have a positive exponent.
\bigskip

The next step is to iterate Proposition \ref{np6}.
We start by noting that if the following stronger condition holds
$$2(\frac32)^{2r}u\le 1,$$
then $u'=b_iu$ satisfies the condition
$$(\frac32)^ru'\le 1,$$
for each $0\le i\le r$. Thus by applying Proposition \ref{np6} with $u'$ replacing $u$ we can also write
$$
A_p(b_iu,B^3,b_iu)\lesssim_{\epsilon,K,r}\delta^{-\epsilon}V_p(\delta)^{1-\sum_{0}^r\gamma_j}D_p(1,B^3)^{1-\sum_{0}^r\gamma_j}\times
$$
$$\prod_{j=0}^rA_p(b_jb_iu,B^3,b_jb_iu)^{\gamma_j}.$$
Combining these estimates with the inequality in Proposition \ref{np6} we get
$$
A_p(u,B^3,u)\lesssim_{\epsilon,K,r}\delta^{-\epsilon}V_p(\delta)^{1-(\sum_{0}^r\gamma_j)^2}D_p(1,B^3)^{1-(\sum_{0}^r\gamma_j)^2}\times
$$
$$\prod_{j=0}^r\prod_{i=0}^rA_p(b_jb_iu,B^3,b_jb_iu)^{\gamma_j\gamma_i}.$$
By iterating this process $M$ times we arrive at the following result.
\begin{theorem}
\label{nt7}
Let $p>9$. Given the integers $r,M\ge 1$,
let $u>0$ be such that
$$[2(\frac32)^r]^Mu\le 2
.$$
Then for each ball $B^3$ of radius $\delta^{-3}$ and $p\ge 9$ we have
$$
A_p(u,B^3,u)\lesssim_{\epsilon,K,r,M}\delta^{-\epsilon}V_p(\delta)^{1-(\sum_{0}^r\gamma_j)^M}D_p(1,B^3)^{1-(\sum_{0}^r\gamma_j)^M}\times
$$
$$\prod_{j_1=0}^r\ldots\prod_{j_M=0}^rA_p(u\prod_{l=1}^Mb_{j_l},B^3,u\prod_{l=1}^Mb_{j_l})^{\prod_{l=1}^M\gamma_{j_l}}.$$
\end{theorem}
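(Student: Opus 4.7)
The plan is to prove Theorem \ref{nt7} by induction on $M$, with Proposition \ref{np6} doing all the analytic work; the base case $M=1$ is exactly Proposition \ref{np6}. For the inductive step, assuming the statement for $M$ under the hypothesis $[2(\tfrac32)^r]^{M+1}u\le 2$, I first note that $[2(\tfrac32)^r]^M u\le 2$ as well, so the inductive hypothesis yields
$$A_p(u,B^3,u)\lesssim_{\epsilon,K,r,M}\delta^{-\epsilon}\,V_p(\delta)^{1-(\sum_0^r\gamma_j)^M}D_p(1,B^3)^{1-(\sum_0^r\gamma_j)^M}\prod_{j_1,\ldots,j_M}A_p(u'_{\vec j},B^3,u'_{\vec j})^{\prod_l\gamma_{j_l}},$$
where $u'_{\vec j}=u\prod_{l=1}^M b_{j_l}$.

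Next, I would apply Proposition \ref{np6} to each inner factor $A_p(u'_{\vec j},B^3,u'_{\vec j})$. The hypothesis $(\tfrac32)^r u'_{\vec j}\le 1$ is satisfied because $b_{j_l}\le b_r=2(\tfrac32)^r$, so
$$(\tfrac32)^r u'_{\vec j}\le (\tfrac32)^r[2(\tfrac32)^r]^M u=\tfrac12[2(\tfrac32)^r]^{M+1}u\le 1.$$
Substituting the resulting inequality into the display above, the new exponent of $V_p(\delta)$, and identically of $D_p(1,B^3)$, becomes
$$1-\Bigl(\sum_j\gamma_j\Bigr)^{\!M}+\Bigl(1-\sum_j\gamma_j\Bigr)\sum_{j_1,\ldots,j_M}\prod_{l=1}^M\gamma_{j_l}=1-\Bigl(\sum_j\gamma_j\Bigr)^{\!M+1},$$
where I used the identity $\sum_{j_1,\ldots,j_M}\prod_l\gamma_{j_l}=(\sum_j\gamma_j)^M$. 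The remaining factors reorganize into
$$\prod_{j_1,\ldots,j_{M+1}}A_p\Bigl(u\prod_{l=1}^{M+1}b_{j_l},B^3,u\prod_{l=1}^{M+1}b_{j_l}\Bigr)^{\prod_{l=1}^{M+1}\gamma_{j_l}},$$
and the two $\delta^{-\epsilon}$ losses merge into a single $\delta^{-\epsilon}$ with an implicit constant depending on $\epsilon,K,r,M+1$. This closes the induction.

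The argument is almost purely bookkeeping. The substantive step is confirming the range condition $(\tfrac32)^r u'_{\vec j}\le 1$ for every multi-index $\vec j$ at each iteration, and the algebraic collapse of the exponent of $V_p(\delta)D_p(1,B^3)$ to $1-(\sum_0^r\gamma_j)^{M+1}$. There is no genuine obstacle: each iteration multiplies the implicit constant by a factor depending only on $\epsilon,K,r$, and since $M$ is fixed, the resulting constant $C_{\epsilon,K,r,M}$ remains finite. No new analytic tool beyond Proposition \ref{np6} is needed.
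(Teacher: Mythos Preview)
Your proof is correct and follows essentially the same approach as the paper: the paper simply says ``By iterating this process $M$ times we arrive at the following result'' after displaying the $M=2$ case explicitly, and your induction on $M$ is exactly that iteration written out in detail. Your verification of the range condition $(\tfrac32)^r u'_{\vec j}\le 1$ and the algebraic collapse of the exponent to $1-(\sum_j\gamma_j)^{M+1}$ are the two points the paper leaves implicit, and you have handled both correctly.
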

\bigskip
An easy computation shows that
$$\lim_{r\to\infty}\sum_{j=0}^r b_j\gamma_j=\frac{9}{p-3}(1+\frac{12-p}{p^2-12p+18}).$$
It is easy to see that
\begin{equation}
\label{ne41}
\sum_{j=0}^\infty b_j\gamma_j>1,
\end{equation}
for $p$ sufficiently close to, but less than the critical exponent $12.$. This observation concludes the proof of Theorem \ref{nt27} for $n=3$, by taking $b_I=\prod_{l=1}^Mb_{j_l}$ , $\gamma_I=\prod_{l=1}^M\gamma_{j_l}$ and $M$ large enough.

While \eqref{ne41} is  easily seen to be true via direct computations in the case $n=3$, a more abstract argument will be needed to address the similar question in higher dimensions.
In preparation for that, we close the subsection with a different perspective on the quantity $\sum_{j=0}^\infty b_j\gamma_j$. Combining this perspective with the result in the Appendix will immediately verify \eqref{ne41} for all $n\ge 3$.

To present this new perspective, let us take another look at the tree from Figure 3. We should in fact envision this as being part of an infinite  tree $\textbf{T}$, corresponding to an infinite number of iterations.
Note the weights $\omega_1,\omega_2,\eta_1, \eta_2$ on the tree $\textbf{T}$. The tree in Figure 3 focuses attention only on the information that was  relevant for  our derivation of Proposition \ref{np6}. First, the leaves $A_p(2(\frac32)^j)$ on the left side of $\textbf{T}$ are abbreviations of the terms $A_p(2(\frac32)^j,B,2(\frac32)^j)$.
This way we  encode the fact that such a term contributes with weight $b_j=2(\frac32)^j$ to the sum \eqref{ne41}.

Second, the leaves $D_p$ on the right side of $\textbf{T}$ do not undergo any processing throughout the iteration, other than being associated with increasingly larger balls. The way they are estimated at the end of the argument is simply using the definition of $V_p$, with no  rescaling (the gain via  rescaling is actually negligible, and only complicates the argument). See \eqref{ne42}.

Third, note the way we specify the entries of the terms $D_{\frac{2p}{3}}$. The size of the ball $2(\frac32)^j$ specifies the precise scale at which that particular term is being processed with Theorem \ref{nt4}. Note that each time such a term $D_{\frac{2p}{3}}((\frac32)^{j-1},B^{2(\frac32)^j})$ is processed this way, it gets replaced with three other terms:

$\bullet$ the similar term $D_{\frac{2p}{3}}((\frac32)^{j},B^{2(\frac32)^{j+1}})$, note the increment $j\mapsto j+1$

$\bullet$ $A_p(2(\frac32)^j)$, via $L^2$ decoupling,

$\bullet$ a term of type $D_p$.
\bigskip

The quantity $\omega_1=\omega_1(p)$ attached to the root of $\textbf{T}$ is
$$\omega_1:=\sum_{j=0}^\infty b_j\gamma_j.$$
The root collects contributions to the sum \eqref{ne41} from both of its  bifurcations. Of course, only the terms $A_p(2(\frac32)^j)$, $j\ge 0$, will produce contributions.
For example, the contribution from the first left leaf is $2(1-\alpha_1)$, which corresponds to the term $b_0\gamma_0$ in \eqref{ne41}. We write $\eta_1=2$, to denote the contribution coming from  $A_p(2)$.

Similarly, we denote by $\omega_2$ the contribution coming from the part of $\textbf{T}$ that is rooted at $D_{\frac{2p}{3}}(1,B^{2\cdot\frac32})$. We can write our first equation as follows
\begin{equation}\label{ne44}
\omega_1=(1-\alpha_1)\eta_1+\alpha_1\omega_2.
\end{equation}
Similarly, let $\eta_2$ be the contribution coming from the part of $\textbf{T}$ that is rooted at $D_{6}(\frac32,B^{2\cdot\frac32})$. Noting that the $D_p$ term on the right of $\textbf{T}$ produces no contribution, we can write the second equation as follows
\begin{equation}\label{ne45}
\omega_2=(1-\alpha_2)\eta_2+\alpha_2\omega_3,
\end{equation}
with the understanding that $\omega_3=0$.

Let us now write an equation for $\eta_2$. Note that $\eta_2$ collects contributions from the left and the right branches. The contribution from  the left comes with weight $1-\beta_2$ and it equals $\frac32\eta_1$, due to the self similarity of $\textbf{T}$ and the ball inflation. The one from the right is $\beta_2\frac32\omega_2$, due to same reasons.  Thus, the third and final equation is
\begin{equation}\label{ne46}
\eta_2=(1-\beta_2)\frac32\eta_1+\beta_2\frac32\omega_2.
\end{equation}
We summarize the equations \eqref{ne44}-\eqref{ne46} into the following system, when $n=3$
\begin{equation}
\label{ne47}
\begin{cases}\omega_j=(1-\alpha_j)\eta_j+\alpha_j\omega_{j+1}&:\quad 1\le j\le n-1,\;\omega_n=0 \\ \hfill \eta_j=(1-\beta_j)\frac{j+1}{j}\eta_{j-1}+\beta_j\frac{j+1}{j}\omega_{j}&:\quad 2\le j\le n-1,\;\eta_1=2 \end{cases}.
\end{equation}

This system will also describe the higher dimensional case $n\ge 4$, as will become clear in the following subsections. And again, it can be easily solved by hand for small values on $n$. In particular, when $n=3$ one gets
$$\omega_1(p)=\frac{9}{p-3}(1+\frac{12-p}{p^2-12p+18}),$$
as observed before. For higher values of $n$, the analysis will be done in the Appendix.
\bigskip

\subsection{The case $n=4$}

It will help the reader if we briefly describe the iteration scheme for $n=4$, before we move on to the general case. The new feature is that, in addition to the initial ball inflation by a factor of 2,  there will be two ball inflations that we will perform repeatedly: by a factor of $\frac32$ and by a factor of $\frac43$. We will follow the notation and philosophy from Subsection \ref{sub1}, with the obvious necessary modifications. For example, we replace $M_3=3!$ with $M_4=4!$ in the definitions of the terms of type $A_p$ and $D_p$.

The relevant Lebesgue indices in the case $n=4$ are $\frac{p}{4}, \frac{2p}{4}, \frac{3p}{4}, \frac{4p}{4}=p$ and also $2=2\cdot1$, $6=3\cdot 2$, $12=4\cdot 3$ and $20=5\cdot 4$. The relevant weights are $\alpha_1,\alpha_2,\alpha_3$ and $\beta_2,\beta_3$, specified by the equalities
$$\frac1{\frac{p}4}=\frac{1-\alpha_1}{2}+\frac{\alpha_1}{2\frac{p}{4}},$$
$$\frac1{\frac{2p}4}=\frac{1-\alpha_2}{6}+\frac{\alpha_2}{3\frac{p}{4}},$$
$$\frac1{\frac{3p}4}=\frac{1-\alpha_3}{12}+\frac{\alpha_3}{p},$$
$$\frac1{6}=\frac{1-\beta_2}{2}+\frac{\beta_2}{2\frac{p}4},$$
$$\frac1{12}=\frac{1-\beta_3}{6}+\frac{\beta_3}{3\frac{p}4}.$$

Throughout this subsection we will implicitly assume that $p$ is sufficiently close to the critical index $20=5\cdot 4$, so that all weights are in $[0,1]$.

Most importantly, we will operate under the assumption that Theorem \ref{ntmain} holds true for $n=3$, $n=2$. In other words, we will assume that
\begin{equation}
\label{ne48}
V_{6,2}(\delta)\lesssim_\epsilon \delta^{-\epsilon},
\end{equation}
\begin{equation}
\label{ne49}
V_{12,3}(\delta)\lesssim_\epsilon \delta^{-\epsilon}.
\end{equation}
Figure 4 below is of the same type as Figure 3, except that it only presents one iteration, containing one of each of the   three types of ball inflation:
$$B^1\mapsto B^2\mapsto B^3\mapsto B^4.$$
The way to continue this tree is to bifurcate each of the terms of type $D_{\frac{p}{2}}$ and $D_{\frac{3p}{p}}$, according to the rules specified in Figure 4. As we will make clear below, the next term that will be processed will be the leaf $D_{\frac{p}2}(\frac32,B^{3\cdot\frac32})$. That will happen when we increment from $B^4$ to $B^{3\cdot\frac32}$.

\medskip
\centerline
{\begin{minipage}{6.2in} 
\centerline
{\includegraphics[height=4.9in]{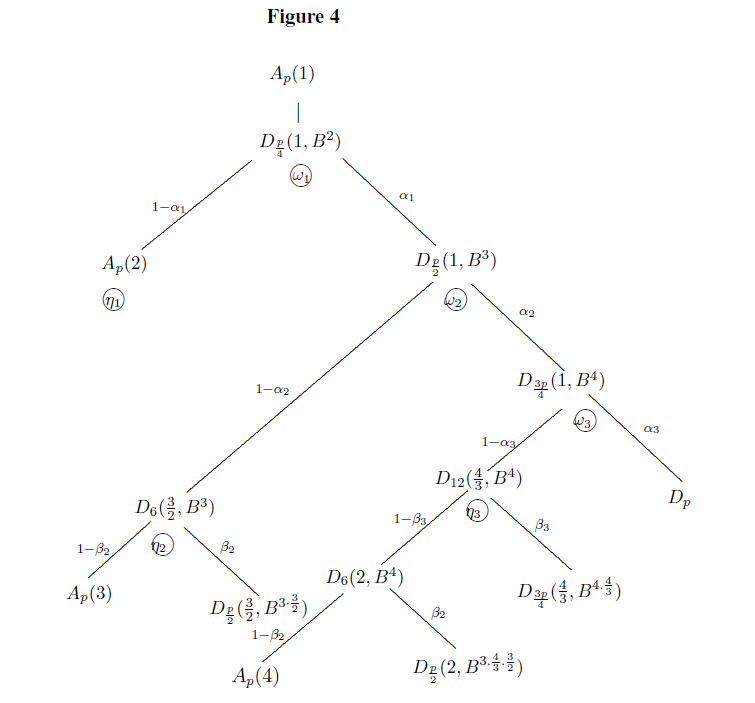} }
\end{minipage}}

\bigskip

We start with the following analog of Proposition \ref{np4}.
\begin{proposition}
\label{np10}
For each ball $B^2\subset \R^4$ with radius $\delta^{-2}$ we have
\begin{equation}
\label{ne63}
A_p(1,B^2,1)\lesssim_{\epsilon,K}\delta^{-\epsilon}D_{\frac{p}{4}}(1,B^2)\lesssim_{\epsilon,K}\delta^{-\epsilon}
A_p(2,B^2,2)^{1-\alpha_1}
D_{\frac{2p}{4}}(1,B^2)^{\alpha_1}.
\end{equation}
\end{proposition}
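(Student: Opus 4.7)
The plan is to mimic exactly the proof of Proposition \ref{np4}, making only the bookkeeping changes needed to pass from $n=3$ to $n=4$. Throughout, $p$ is assumed large enough (say $p\ge 8=2n$) for the Lebesgue exponents below to lie in the allowed ranges.

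First I would establish the left inequality. Since $p\ge 2n=8$, monotonicity of averaged $L^q$ norms gives $\|E_{J_{i,1}}g\|_{L^2_\sharp(w_{B^1})}\le \|E_{J_{i,1}}g\|_{L^{p/4}_\sharp(w_{B^1})}$ for every interval $J_{i,1}$ of length $\delta$ and every ball $B^1$ in a finitely overlapping cover of $B^2$. Consequently,
\[
A_p(1,B^2,1)\le \Big(\frac{1}{|\mathcal B_1(B^2)|}\sum_{B^1\in \mathcal B_1(B^2)} D_{p/4}(1,B^1)^{p}\Big)^{1/p}.
\]
Now apply Theorem \ref{nt4} with $n=4$, $k=1$, $\rho=\delta$: the outer ball has radius $\rho^{-(k+1)}=\delta^{-2}$ and the inner balls have radius $\rho^{-k}=\delta^{-1}$, and the Lebesgue exponent $pk/n=p/4$ matches. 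This gives
\[
A_p(1,B^2,1)\lesssim_{\epsilon,K}\delta^{-\epsilon}\, D_{p/4}(1,B^2),
\]
which is the first inequality.

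Next I would handle the right inequality. By definition of $\alpha_1$, the exponent identity $\frac{1}{p/4}=\frac{1-\alpha_1}{2}+\frac{\alpha_1}{2p/4}$ gives, by H\"older's inequality applied to the weighted integral,
\[
\|E_{J_{i,1}}g\|_{L^{p/4}_\sharp(w_{B^2})}\le \|E_{J_{i,1}}g\|_{L^{2}_\sharp(w_{B^2})}^{1-\alpha_1}\,\|E_{J_{i,1}}g\|_{L^{2p/4}_\sharp(w_{B^2})}^{\alpha_1}.
\]
Squaring and summing in $J_{i,1}$ and using H\"older once more in the discrete $\ell^1$ sum (as in the proof of Proposition \ref{np4}) leads to
\[
D_{p/4}(1,B^2)\le D_2(1,B^2)^{1-\alpha_1}\, D_{2p/4}(1,B^2)^{\alpha_1}.
\]
Finally, I would dispose of the $L^2$ factor by the $L^2$ decoupling from Lemma \ref{nl3}, applied at the scale $R=\delta^{-2}$, $R^{-1}=\delta^{2}$: the intervals of length $\delta$ get further decoupled into intervals of length $\delta^2$, yielding
\[
D_2(1,B^2)\lesssim D_2(2,B^2)=A_p(2,B^2,2),
\]
where the equality is just the definition in the diagonal case $r=s$. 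Chaining the three inequalities produces exactly \eqref{ne63}.

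There is no real obstacle here; the argument is a mechanical transcription of Proposition \ref{np4} with $M_3$ replaced by $M_4$, $k=1$ and $n=4$ substituted in Theorem \ref{nt4}, and $\alpha_1=\alpha_1(p)$ read off from the $n=4$ exponent relation. The only points that warrant care are verifying that $p\ge 2n=8$ (so both the monotonicity step and the hypothesis $p\ge 2n$ of Theorem \ref{nt4} are valid) and checking that the H\"older interpolation places $p/4$ between $2$ and $p/2$, which is automatic in the regime where $p$ is near the critical exponent $20$.
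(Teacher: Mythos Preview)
Your proof is correct and follows exactly the approach the paper intends: it is a direct transcription of the proof of Proposition \ref{np4} with $n=3$ replaced by $n=4$, using Theorem \ref{nt4} with $k=1$ for the ball inflation, H\"older for the interpolation, and Lemma \ref{nl3} for the $L^2$ decoupling. The paper itself gives no separate proof of Proposition \ref{np10}, presenting it simply as the analog of Proposition \ref{np4}, so your write-up is precisely what is expected.
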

We next perform the first ball inflation, in order  to process the term $D_{\frac{2p}{4}}(1,B^2)$, by averaging \eqref{ne63} over a finitely overlapping cover $\B_2(B^3)$ of $B^3$ with balls $B^2$.
\begin{proposition}
\label{np11}
For each ball $B^3\subset \R^4$ with radius $\delta^{-3}$ we have
$$A_p(1,B^3,1)\lesssim_{\epsilon,K}\delta^{-\epsilon}\times$$
\begin{equation}\label{ne112}
A_p(2,B^3,2)^{1-\alpha_1}\times\end{equation}\begin{equation}
\label{ne114}A_p(3,B^3,3)^{\alpha_1(1-\alpha_2)(1-\beta_2)}\times\end{equation}\begin{equation}
\label{ne115}D_{\frac{2p}{4}}(\frac32,B^3)^{\alpha_1(1-\alpha_2)\beta_2}\times\end{equation}\begin{equation}
\label{ne116}D_{\frac{3p}4}(1,B^3)^{\alpha_1\alpha_2}.
\end{equation}
\end{proposition}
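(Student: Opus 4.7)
The plan is to mimic the four-stage proof of Proposition \ref{np3}, replacing each ingredient by its $n=4$ counterpart; all of the tools are already in place, so the exercise is really one of checking that dimensional and Lebesgue-index bookkeeping fits together.

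First, I would start from the inequality provided by Proposition \ref{np10}, raise it to the $p$-th power, and average over a finitely overlapping cover $\B_2(B^3)$ of $B^3$ by balls $B^2$. Hölder's inequality with weights $(1-\alpha_1,\alpha_1)$ splits the average into two factors. The first factor averages cleanly to $A_p(2,B^3,2)^{p(1-\alpha_1)}$ by definition of $A_p$. For the second factor, namely the average of $D_{\frac{2p}{4}}(1,B^2)^p$, I would invoke Theorem \ref{nt4} with $n=4$, $k=2$ and $\rho=\delta$: the geometry matches ($\rho^{-(k+1)}=\delta^{-3}$ for the outer ball, $\rho^{-k}=\delta^{-2}$ for the inner balls), the Lebesgue index $pk/n=p/2$ coincides with $\frac{2p}{4}$, and $p\ge 2n=8$ holds throughout our regime. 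The outcome is
\[A_p(1,B^3,1)\lesssim_{\epsilon,K}\delta^{-\epsilon}A_p(2,B^3,2)^{1-\alpha_1}D_{\frac{2p}{4}}(1,B^3)^{\alpha_1}.\]

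Next, I would process $D_{\frac{2p}{4}}(1,B^3)$ in three successive steps. Hölder combined with the defining identity of $\alpha_2$ yields $D_{\frac{2p}{4}}(1,B^3)\le D_6(1,B^3)^{1-\alpha_2}D_{\frac{3p}{4}}(1,B^3)^{\alpha_2}$. Lemma \ref{nl5} applied with $n=4$ and $k=3$ (so that $V_{p,k-1}$ becomes $V_{6,2}$, which is bounded by $\delta^{-\epsilon}$ thanks to \eqref{ne48}) transforms $D_6(1,B^3)$ into $D_6(\tfrac{3}{2},B^3)$ at the cost of a factor of $\delta^{-\epsilon}$; the hypothesis $\sigma^{-k+1}=\delta^{-2}\lesssim\delta^{-3}=R$ is satisfied with room to spare. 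Then Hölder with the defining identity of $\beta_2$ gives $D_6(\tfrac{3}{2},B^3)\le D_2(\tfrac{3}{2},B^3)^{1-\beta_2}D_{\frac{2p}{4}}(\tfrac{3}{2},B^3)^{\beta_2}$, and finally the $L^2$ decoupling of Lemma \ref{nl3} replaces $D_2(\tfrac{3}{2},B^3)$ by $D_2(3,B^3)=A_p(3,B^3,3)$, since $R^{-1}=\delta^{3}$.

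Multiplying out the exponents across all four stages produces exactly the claimed product inequality. The only structural novelty compared to the $n=3$ case is the choice $k=3$ in the lower-dimensional decoupling, which is dictated by the need to invoke a known decoupling constant $V_{6,2}$; relatedly, the outer Hölder interpolation is split between $L^6$ and $L^{3p/4}$ rather than $L^6$ and $L^p$, because $\frac{1}{2p/4}$ lies between $\frac{1}{6}$ and $\frac{1}{3p/4}$ in the $n=4$ scheme. No genuine obstacle is anticipated: everything reduces to verifying that the scaling hypotheses of Theorem \ref{nt4} and Lemma \ref{nl5} hold at $\rho=\delta$, $R=\delta^{-3}$, and $k\in\{2,3\}$, which they do.
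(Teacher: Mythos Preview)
Your proposal is correct and follows precisely the same approach as the paper's proof, which is itself a one-paragraph sketch pointing to the four stages of Proposition \ref{np3}: ball inflation via Theorem \ref{nt4} with $k=2$, H\"older interpolation of $\tfrac{2p}{4}$ between $6$ and $\tfrac{3p}{4}$, lower-dimensional decoupling using \eqref{ne48}, and finally H\"older plus $L^2$ decoupling. You have filled in the bookkeeping (the scaling hypotheses for Theorem \ref{nt4} and Lemma \ref{nl5}, and the role of the defining identities for $\alpha_2,\beta_2$) exactly as intended.
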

\begin{proof}
The argument follows closely the one in the proof of Proposition \ref{np3}. We first use Theorem \ref{nt4} with $k=2$ to replace $D_{\frac{2p}{4}}(1,B^2)$ with $D_{\frac{2p}{4}}(1,B^3)$. Then we interpolate (H\"older) $\frac{2p}{4}$ between $6$ and $\frac{3p}{4}$. Finally, we use \eqref{ne48} and lower dimensional decoupling for the $L^6$ term, and $L^2$ decoupling to arrive at  the $A_p$ term.

\end{proof}
There is only one thing left to do to finish the first iteration, that is to process the term \eqref{ne116}. To do so, we increase from $B^3$ to $B^4$. Let $\B_3(B^4)$ be a finitely overlapping cover of $B^4$ with balls $B^3$.
\begin{proposition}
\label{np12}
For each ball $B^4\subset \R^4$ with radius $\delta^{-4}$ we have
$$A_p(1,B^4,1)\lesssim_{\epsilon,K}\delta^{-\epsilon}\times$$
\begin{equation}\label{ne212}
A_p(2,B^4,2)^{1-\alpha_1}\times\end{equation}\begin{equation}
\label{ne214}A_p(3,B^4,3)^{\alpha_1(1-\alpha_2)(1-\beta_2)}\times\end{equation}\begin{equation}
\label{ne216}A_p(4,B^4,4)^{\alpha_1\alpha_2(1-\alpha_3)(1-\beta_3)(1-\beta_2)}\times
\end{equation}\begin{equation}
\label{ne215}\big[\frac{1}{|\B_3(B^4)|}\sum_{B^3\in\B_3(B^4)}D_{\frac{2p}{4}}(\frac32,B^3)^p\big]^{\frac1{p}\alpha_1(1-\alpha_2)\beta_2}\times\end{equation}
\begin{equation}
\label{ne217}D_{\frac{2p}{4}}(2,B^4)^{\alpha_1\alpha_2(1-\alpha_3)(1-\beta_3)\beta_2}\times
\end{equation}
\begin{equation}
\label{ne218}D_{\frac{3p}{4}}(\frac43,B^4)^{\alpha_1\alpha_2(1-\alpha_3)\beta_3}\times
\end{equation}
\begin{equation}
\label{ne219}D_p(1,B^4)^{\alpha_1\alpha_2\alpha_3}.
\end{equation}
\end{proposition}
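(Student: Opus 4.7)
The plan is to follow the template of the proof of Proposition \ref{np3}, lifted to dimension four. One starts from the bound on $A_p(1,B^3,1)$ supplied by Proposition \ref{np11}, raises it to the $p$-th power, and averages over a finitely overlapping cover $\B_3(B^4)$ of $B^4$ by balls $B^3$. The identity $A_p(1, B^4, 1)^p = \frac{1}{|\B_3(B^4)|}\sum_{B^3} A_p(1, B^3, 1)^p$ (which comes from the nesting of the covers), combined with a multi-linear H\"older, then separates the four factors on the right-hand side of Proposition \ref{np11} cleanly. The two $A_p$-averages collapse, by the same nesting, into $A_p(2, B^4, 2)^{p(1-\alpha_1)}$ and $A_p(3, B^4, 3)^{p\alpha_1(1-\alpha_2)(1-\beta_2)}$, yielding \eqref{ne212} and \eqref{ne214}. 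The $D_{\frac{2p}{4}}(\tfrac{3}{2},B^3)$-average is left untouched, giving \eqref{ne215}. Finally, the $D_{\frac{3p}{4}}(1, B^3)$-average is ball-inflated via Theorem \ref{nt4} applied with $k=3,\;n=4$ (here $pk/n=\frac{3p}{4}$ matches the Lebesgue index), producing $\delta^{-\epsilon p}\,D_{\frac{3p}{4}}(1, B^4)^{p\alpha_1\alpha_2}$.

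The crux is then to decompose $D_{\frac{3p}{4}}(1, B^4)$ into the four new leaves \eqref{ne216}--\eqref{ne219}, via a cascade of H\"older interpolations and decouplings that is the two-step analogue of the one-step cascade used in the proof of Proposition \ref{np3}. The first H\"older step (governed by $\alpha_3$) splits $D_{\frac{3p}{4}}(1,B^4)$ between $D_{12}(1, B^4)$ and $D_p(1,B^4)$; the latter, raised to the $\alpha_1\alpha_2\alpha_3$ power, is already \eqref{ne219}. Next apply Lemma \ref{nl5} to $D_{12}(1, B^4)$ with $k=4$, invoking $V_{12,3}(\delta)\lesssim_\epsilon \delta^{-\epsilon}$ from the inductive input \eqref{ne49}. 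The scale condition $\sigma^{-k+1}=\delta^{-3}\lesssim \delta^{-4}=R$ is satisfied, and the target interval length is $R^{-1/(k-1)}=\delta^{4/3}$, so $D_{12}(1, B^4)\lesssim_\epsilon \delta^{-\epsilon} D_{12}(\tfrac{4}{3}, B^4)$.

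A second H\"older step (governed by $\beta_3$) splits $D_{12}(\tfrac{4}{3}, B^4)$ between $D_6(\tfrac{4}{3}, B^4)$ and $D_{\frac{3p}{4}}(\tfrac{4}{3}, B^4)$; the latter is \eqref{ne218}. Apply Lemma \ref{nl5} to $D_6(\tfrac{4}{3}, B^4)$ with $k=3$, invoking $V_{6,2}(\cdot)\lesssim_\epsilon \delta^{-\epsilon}$ from \eqref{ne48}; the condition $\sigma^{-2}=\delta^{-8/3}\lesssim \delta^{-4}$ holds, and the target scale is $R^{-1/(k-1)}=\delta^{2}$, yielding $D_6(\tfrac{4}{3}, B^4)\lesssim_\epsilon \delta^{-\epsilon} D_6(2, B^4)$. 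A third H\"older step (governed by $\beta_2$) splits $D_6(2, B^4)$ between $D_2(2, B^4)$ and $D_{\frac{2p}{4}}(2, B^4)$, the second factor giving \eqref{ne217}. Finally $L^2$ decoupling (Lemma \ref{nl3}) sends $D_2(2, B^4)$ down to $D_2(4, B^4) = A_p(4, B^4, 4)$, which is \eqref{ne216}.

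Multiplying out the overall weight $\alpha_1\alpha_2$ carried by $D_{\frac{3p}{4}}(1, B^3)$ in Proposition \ref{np11} distributes across these four factors to produce exactly the exponents in \eqref{ne216}--\eqref{ne219}. The argument is a conceptually direct extension of Proposition \ref{np3}: the only genuinely new ingredient is that in dimension four the decomposition of the $D_{\frac{3p}{4}}$ term requires two nested lower-dimensional decouplings (first through the $n=3$ Vinogradov input, then through the $n=2$ input), rather than the single one that sufficed in dimension three. No serious obstacle is anticipated beyond verifying that each scale hypothesis $\sigma^{-k+1}\lesssim R$ in Lemma \ref{nl5} is met at the correct stage (they are, as noted) and that the H\"older exponents assemble correctly into the claimed products.
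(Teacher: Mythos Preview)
Your proposal is correct and follows essentially the same route as the paper's own proof: average Proposition~\ref{np11} over $\B_3(B^4)$, apply Theorem~\ref{nt4} with $k=3$ to inflate $D_{\frac{3p}{4}}(1,B^3)$, then run the cascade H\"older($\alpha_3$) $\to$ lower-dimensional decoupling via \eqref{ne49} $\to$ H\"older($\beta_3$) $\to$ lower-dimensional decoupling via \eqref{ne48} $\to$ H\"older($\beta_2$) $\to$ $L^2$ decoupling. In fact you have spelled out the final H\"older($\beta_2$) step explicitly, which the paper's terse proof leaves implicit, and your verification of the scale hypotheses in each application of Lemma~\ref{nl5} is exactly what is needed.
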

\begin{proof}
Average the inequality from  Proposition \ref{np11} raised to the power $p$, over all the balls in  $\B_3(B^4)$.
Use Theorem \ref{nt4} with $k=3$, then use H\"older to interpolate $\frac{3p}{4}$ between $12$ and $p$, lower dimensional decoupling in $\R^3$ for the $D_{12}$ term (use \eqref{ne49}), H\"older again to interpolate $12$ between 6 and $\frac{3p}4$, lower dimensional decoupling  in $\R^2$ for the $D_6$ term (use \eqref{ne48}) and finally, $L^2$ decoupling for the $A_p$ term.

\end{proof}
This ends the first iteration. The seven terms  \eqref{ne212}-\eqref{ne219} correspond to the seven leaves of the tree in Figure 4.

Note that \eqref{ne115} does not fully get processed in this first iteration, and becomes \eqref{ne215}. However, this term will be processed on balls $B^{3\cdot \frac32}$, at the next ball inflation, where the increment is from $B^4$ to $B^{3\cdot \frac32}$. The argument goes as follows. Let $\B_4(B^{3\cdot \frac32})$ be a finitely overlapping cover of $B^{3\cdot \frac32}$ with balls $B^4$. Let $\B_3(B^4)$ be a finitely overlapping cover of $B^4$ with balls $B^3$. Then
$$\left(\frac{1}{|\B_4(B^{3\cdot \frac32})|}\sum_{B^4\in\B_4(B^{3\cdot \frac32})}\left(\big[\frac{1}{|\B_3(B^4)|}\sum_{B^3\in\B_3(B^4)}D_{\frac{2p}{4}}(\frac32,B^3)^p\big]^{\frac1{p}}\right)^p\right)^{1/p}=$$
\begin{equation}
\label{ nvbyo8uer897g98t8579-5y0}
\big[\frac{1}{|\B|}\sum_{B^3\in\B}D_{\frac{2p}{4}}(\frac32,B^3)^p\big]^{\frac1{p}},
\end{equation}
where $$\B=\bigcup_{B^4\in \B_4(B^{3\cdot \frac32})}\{B^3:\;B^3\in\B_3(B^4)\}$$
is a finitely overlapping cover of $B^{3\cdot\frac32}$ with balls $B^3$. Using Theorem \ref{nt4} with $n=4$ and $k=2$, the term \eqref{ nvbyo8uer897g98t8579-5y0} is dominated by
$$\lesssim_{\epsilon,K}\delta^{-\epsilon}D_{\frac{2p}{4}}(\frac32,B^{3\cdot\frac32}).$$
\bigskip

Imagine now an infinite version $\textbf{T}$ of the tree in Figure 4. The only leaves that we would see are the $A_p$ terms on the left and the $D_p$ terms on the right. The terms $D_p$ get processed in each stage of the iteration via \eqref{j nbyrtug8rt7g856uy956y9}. Also, all the terms of type $D_{\frac{2p}{4}}$ or $D_{\frac{3p}{4}}$ get eventually processed.  If we count all the  balls that take part in the inflation, we will find all radii of the form $\delta^{-3(\frac32)^i(\frac43)^j}$ with $i,j\ge 0$, in addition to the initial radii $\delta^{-1}$ and $\delta^{-2}$. The inflations occur in increasing order of these radii. We have explained the following sequence of inflations
$$1\mapsto 2\mapsto 2\cdot\frac32\mapsto 3\cdot\frac43\mapsto 3\cdot\frac32.$$
A term of type $D_{\frac{2p}{4}}$ gets processed on a ball $B^{3(\frac32)^i(\frac43)^j}$, if its path back to the root of $\textbf{T}$ encounters exactly $i$ terms of type $D_{\frac{2p}{4}}$ and exactly $j$ terms of type $D_{\frac{3p}{4}}$. A term of type $D_{\frac{3p}{4}}$ gets processed on a ball $B^{3(\frac32)^i(\frac43)^j}$, if its path back to the root of $\textbf{T}$ encounters exactly $i+1$ terms of type $D_{\frac{2p}{4}}$ and exactly $j-1$ terms of type $D_{\frac{3p}{4}}$.

Each time a term of type $D_{\frac{2p}{4}}$ is processed, it gets replaced with three terms of  type $A_p,D_{\frac{2p}{4}}, D_{\frac{3p}{4}}$. Each time a term of type $D_{\frac{3p}{4}}$ is processed, it gets replaced with four terms of  type $A_p,D_{\frac{2p}{4}}, D_{\frac{3p}{4}}, D_{\frac{4p}{4}}$.
\bigskip

While this process may be complicated, the features that are relevant for us turn out to be rather simple. We summarize them in the following analog of Proposition \ref{np6}.

\begin{proposition}
\label{np16}
Assume \eqref{ne48} and \eqref{ne49}, and let $p$ be sufficiently close to $20$. Let $u>0$ be sufficiently close to $0$, depending on how large $r$ is (the exact dependence is irrelevant).
Then for each ball $B^4$ of radius $\delta^{-4}$ in $\R^4$ we have
$$
A_p(u,B^4,u)\lesssim_{\epsilon,K,r}\delta^{-\epsilon}V_{p,4}(\delta)^{1-\sum_{0}^r\gamma_j}D_p(1,B^4)^{1-\sum_{0}^r\gamma_j}\times
$$
$$\prod_{j=0}^rA_p(b_ju,B^4,b_ju)^{\gamma_j},$$
for some $\gamma_j,b_j>0$.  Also, we have the crucial identity
\begin{equation}
\label{ne60}
\omega_1:=\sum_{j=0}^\infty b_j\gamma_j,
\end{equation}
where $\omega_1=\omega_1(p)$ is the solution of the system \eqref{ne47} with $n=4$.
\end{proposition}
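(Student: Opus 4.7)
The plan is to mimic the $n=3$ scheme of Subsection \ref{sub1} with the two new ingredients that Figure 4 already displays: in place of a single intermediate ball-inflation factor $\frac32$, the $n=4$ case must handle both $\frac32$ and $\frac43$. Propositions \ref{np10}--\ref{np12} already perform the initial inflations $B^1\mapsto B^2\mapsto B^3\mapsto B^4$ and give an estimate whose right-hand side consists of the seven leaves \eqref{ne212}--\eqref{ne219}. The proof proposal is therefore to iterate Proposition \ref{np12}, processing every remaining $D_{\frac{2p}{4}}$ and $D_{\frac{3p}{4}}$ leaf according to fixed rules, and then to verify the algebraic identity \eqref{ne60} by analyzing the infinite tree $\textbf T$ via self-similarity.

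The processing rules are dictated by the hierarchy in Theorem \ref{nt3}. A leaf $D_{\frac{2p}{4}}(s,B^r)$ is handled by inflating to $B^{\frac32 r}$ using Theorem \ref{nt4} with $k=2$, then interpolating (Hölder) $L^{\frac{2p}{4}}$ between $L^{6}$ and $L^{\frac{3p}{4}}$, then applying lower-dimensional decoupling in $\R^2$ to the $L^{6}$ factor via Lemma \ref{nl5} and the hypothesis \eqref{ne48}, and finally $L^2$ decoupling (Lemma \ref{nl3}) to extract an $A_p$ leaf at the symmetric scale. This produces three children of types $A_p$, $D_{\frac{2p}{4}}$, $D_{\frac{3p}{4}}$, exactly as in Figure 4. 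A leaf $D_{\frac{3p}{4}}(s,B^r)$ is handled analogously, inflating to $B^{\frac43 r}$ via Theorem \ref{nt4} with $k=3$, interpolating $L^{\frac{3p}{4}}$ between $L^{12}$ and $L^{p}$, applying \eqref{ne49} and Lemma \ref{nl5} in $\R^3$ for the $L^{12}$ factor, and cascading again through $L^{6}$ via \eqref{ne48} and $L^2$ decoupling. This produces four children of types $A_p$, $D_{\frac{2p}{4}}$, $D_{\frac{3p}{4}}$, $D_p$. After sufficiently many rounds of iteration (how many depending on $r$ and $u$), the tree is pruned to $A_p$ leaves, which remain as factors $A_p(b_j u,B^4,b_j u)^{\gamma_j}$, and $D_p$ leaves, which are bounded by $V_{p,4}(\delta)D_p(1,B^4)$ with no rescaling, exactly as in \eqref{ne42}. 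Collecting exponents along paths gives the displayed inequality.

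For the identity \eqref{ne60}, I would introduce, for $j\in\{2,3\}$, the quantity $\omega_j$ as the formal sum $\sum b\gamma$ contributed by the subtree rooted at a $D_{\frac{jp}{4}}$ node, and $\eta_j$ as the analogous quantity for an $L^{j(j+1)}$ node (with $\eta_1=2$ encoding the initial inflation $B^1\mapsto B^2$). Each $D_{\frac{jp}{4}}$ node splits into an $L^{j(j+1)}$ child with weight $1-\alpha_j$ and a $D_{\frac{(j+1)p}{4}}$ child with weight $\alpha_j$, giving $\omega_j=(1-\alpha_j)\eta_j+\alpha_j\omega_{j+1}$. Each $L^{j(j+1)}$ node splits into an $A_p$ child (contributing $\frac{j+1}{j}\eta_{j-1}$ by self-similarity and the ball inflation factor) and a $D_{\frac{jp}{4}}$ child at the larger scale, contributing $\frac{j+1}{j}\omega_j$, which gives $\eta_j=(1-\beta_j)\frac{j+1}{j}\eta_{j-1}+\beta_j\frac{j+1}{j}\omega_j$. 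With the boundary convention $\omega_n=\omega_4=0$ (because the $D_p$ leaves feed only into $V_{p,4}(\delta)$ and carry no contribution to $\omega_1$), this is precisely the system \eqref{ne47} for $n=4$.

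The main obstacle is the bookkeeping: one must check that the self-similarity of $\textbf T$ is exact, that the scale on which a given $D_{\frac{jp}{4}}$ leaf is processed is consistent across the iteration, and in particular that the symmetry $A_p(v,B,v)$ (matching first and third arguments) is preserved by every step, so that $L^2$ decoupling remains streamlined. This symmetry is the reason the $\frac{j+1}{j}$ factors accompany the $\eta$-coefficients in \eqref{ne47} and it is what will eventually allow the reduction, carried out in the Appendix, of the strict inequality $\omega_1(p)>1$ to a clean computation for $p$ slightly less than $n(n+1)=20$.
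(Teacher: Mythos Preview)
Your proposal is correct and follows essentially the same route as the paper. The paper's own proof is very terse---it declares the inequality ``rather tautological'' once the iteration machinery from Propositions \ref{np10}--\ref{np12} is in place, notes that the exponent $1-\sum_j\gamma_j$ is forced by scaling, and says the identity \eqref{ne60} follows ``exactly like in the case $n=3$''---and your write-up fills in exactly those steps: the processing rules for $D_{\frac{2p}{4}}$ and $D_{\frac{3p}{4}}$ leaves, the termination into $A_p$ and $D_p$ leaves with the latter absorbed by $V_{p,4}(\delta)D_p(1,B^4)$, and the self-similarity argument that recovers the system \eqref{ne47}.
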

\begin{proof}
Apart from \eqref{ne60}, the statement is rather tautological. Indeed, using simple inequalities such as \eqref{j nbyrtug8rt7g856uy956y9},
$$D_{\frac{2p}4}(u,B^4),D_{\frac{3p}4}(u,B^4)\lesssim D_{p}(u,B^4)$$
and
$$D_p(v,B^4)\le V_{p,4}(\delta)D_p(1,B^4),\;v\ge 1$$
we obviously end up only with terms of type $A_p(b_ju,B^4,b_ju)$, $D_p(1,B^4)$, $V_p(\delta)$, for appropriate $b_j$, whose values will not be important. The resulting inequality will necessarily be of the type
$$
A_p(u,B^4,u)\lesssim_{\epsilon,K,r}\delta^{-\epsilon}V_{p,4}(\delta)^{1-D_r}D_p(1,B^4)^{1-C_r}\times
$$
$$\prod_{j=0}^rA_p(b_ju,B^4,b_ju)^{\gamma_j},$$
for some $C_r\le D_r$. Since $V_{p,4}(\delta)\ge 1$, we might as well replace $D_r$ with $C_r$.
Whatever the values of $\gamma_j$ are, the fact that $C_r=\sum_{j=0}^r\gamma_j$ follows from scaling considerations, by recalling the definition of the $A_p,D_p$ terms.
\medskip

The identity \eqref{ne60} follows from the observations we made earlier, exactly like in the case $n=3$, which was explained at the end of the previous subsection.

\end{proof}

By iterating Proposition \ref{np16} $M$ times, in the same way we did it for  $n=3$, we arrive at the following inequality.
\begin{theorem}Let $p$ be sufficiently close to $20$.
\label{nt17}Assume \eqref{ne48} and \eqref{ne49} hold. Let $r, M\ge 1$. Then for $u>0$ sufficiently close to $0$ (how small depends on $r$ and $M$) and for each ball $B^4$ in $\R^4$ of radius $\delta^{-4}$  we have
$$
A_p(u,B^4,u)\lesssim_{\epsilon,K,r,M}\delta^{-\epsilon}V_{p,4}(\delta)^{1-(\sum_{0}^r\gamma_j)^M}D_p(1,B^4)^{1-(\sum_{0}^r\gamma_j)^M}\times
$$
$$\prod_{j_1=0}^r\ldots\prod_{j_M=0}^rA_p(u\prod_{l=1}^Mb_{j_l},B^4,u\prod_{l=1}^Mb_{j_l})^{\prod_{l=1}^M\gamma_{j_l}}.$$
Moreover, $\gamma_j,b_j$ satisfy \eqref{ne60}.
\end{theorem}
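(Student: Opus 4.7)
The plan is to prove Theorem \ref{nt17} by induction on $M$, where the base case $M=1$ is exactly Proposition \ref{np16}, and the inductive step consists of applying Proposition \ref{np16} to each factor $A_p(u\prod_{l=1}^M b_{j_l}, B^4, u\prod_{l=1}^M b_{j_l})$ that appears on the right-hand side. Before doing this, one needs to ensure that the arguments $u\prod_{l=1}^M b_{j_l}$ still fall in the range where Proposition \ref{np16} is applicable. Since each $b_j$ is bounded above by some constant $b^\star = b^\star(r)$ (coming from the finitely many ball inflations encountered in the proof of Proposition \ref{np16}), choosing $u$ small enough that $(b^\star)^M u$ lies below the threshold from Proposition \ref{np16} guarantees this uniformly across all iterations.

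Assume the inequality at stage $M$. For each tuple $(j_1,\ldots,j_M)\in\{0,\ldots,r\}^M$, Proposition \ref{np16} applied to $u'=u\prod_{l=1}^M b_{j_l}$ yields
\begin{equation*}
A_p(u',B^4,u')\lesssim_{\epsilon,K,r}\delta^{-\epsilon}V_{p,4}(\delta)^{1-\sum_j\gamma_j}D_p(1,B^4)^{1-\sum_j\gamma_j}\prod_{j_{M+1}=0}^r A_p(b_{j_{M+1}}u',B^4,b_{j_{M+1}}u')^{\gamma_{j_{M+1}}}.
\end{equation*}
Raise this inequality to the power $\prod_{l=1}^M\gamma_{j_l}$, then multiply together over all $(j_1,\ldots,j_M)$, and substitute into the inductive hypothesis. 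Because $\sum_{(j_1,\ldots,j_M)}\prod_{l=1}^M\gamma_{j_l}=\big(\sum_j\gamma_j\big)^M$, the exponents of $V_{p,4}(\delta)$ and $D_p(1,B^4)$ combine as
\begin{equation*}
\bigl(1-(\textstyle\sum_j\gamma_j)^M\bigr)+\bigl(\textstyle\sum_j\gamma_j\bigr)^M\bigl(1-\textstyle\sum_j\gamma_j\bigr)=1-\bigl(\textstyle\sum_j\gamma_j\bigr)^{M+1}.
\end{equation*}
The $A_p$ factors combine into the product $\prod_{j_1,\ldots,j_{M+1}}A_p\bigl(u\prod_{l=1}^{M+1}b_{j_l},B^4,u\prod_{l=1}^{M+1}b_{j_l}\bigr)^{\prod_{l=1}^{M+1}\gamma_{j_l}}$, which is exactly the form claimed at stage $M+1$.

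The implicit constants and the $\delta^{-\epsilon}$ losses accumulate by a factor of at most $(r+1)^M$ across the iteration, but since $r$ and $M$ are fixed parameters, we absorb these by redefining $\epsilon$ (equivalently, the implicit constant is allowed to depend on $\epsilon,K,r,M$, which matches the statement). Finally, the assertion that $\gamma_j,b_j$ satisfy \eqref{ne60} is not affected by the iteration — it is an intrinsic property of the $\gamma_j$ and $b_j$ already recorded in Proposition \ref{np16}.

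The main bookkeeping obstacle is simply the exponent accounting above; the only genuine constraint is the smallness of $u$, which forces us to record the qualitative dependence ``$u$ sufficiently close to $0$, depending on $r$ and $M$'', and this is precisely what the statement of Theorem \ref{nt17} allows. There is no further analytic content beyond the inductive substitution.
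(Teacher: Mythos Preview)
Your proof is correct and follows essentially the same route as the paper: the paper states that Theorem \ref{nt17} is obtained ``by iterating Proposition \ref{np16} $M$ times, in the same way we did it for $n=3$,'' and the $n=3$ derivation (leading to Theorem \ref{nt7}) is precisely the inductive substitution you carry out, with the same exponent arithmetic $(1-(\sum_j\gamma_j)^M)+(\sum_j\gamma_j)^M(1-\sum_j\gamma_j)=1-(\sum_j\gamma_j)^{M+1}$ and the same smallness condition on $u$ to keep each $u'=u\prod_l b_{j_l}$ in range.
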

Combining this with Theorem \ref{tApe} from the Appendix finishes the proof of Theorem \ref{nt27} for $n=4$, by taking $b_I=\prod_{l=1}^Mb_{j_l}$ , $\gamma_I=\prod_{l=1}^M\gamma_{j_l}$ and $M$ large enough.

\bigskip

\subsection{The case of arbitrary $n$}
\label{slast}
A careful reading of the previous two subsections allows for a rather straightforward extension of our  iteration scheme to all dimensions.
We will have $n-1$ types of increments in the ball inflation, by factors $\frac{j+1}j$, $1\le j\le n-1$. The one corresponding to $j=1$ is performed only once, at the very beginning of the argument. The ones corresponding to $j\ge 2$ will be performed repeatedly.
\bigskip

The relevant Lebesgue indices are now $\frac{jp}{n},\;2\le j\le n$ and also $j(j+1),\;1\le j\le n$. The relevant weights are $\alpha_1,\ldots,\alpha_{n-1}$ and $\beta_2,\ldots,\beta_{n-1}$. Their exact values are
$$\alpha_j=\frac{\frac{p}{n}-(j+1)}{\frac{p}{n}-j},$$
$$\beta_j=\frac{\frac{2p}{n}}{(j+1)(\frac{p}{n}-j+1)}.$$
Throughout this section we will implicitly assume that $p$ is sufficiently close to the critical index $n(n+1)$, so that all weights are in $[0,1]$.

Most importantly, we will operate under the assumption that Theorem \ref{ntmain} holds true  in all dimensions smaller than $n$. In other words, we will assume that
\begin{equation}
\label{ne65}
V_{j(j+1),j}(\delta)\lesssim_\epsilon \delta^{-\epsilon},\;\;2\le j\le n-1,\;0<\delta\le 1.
\end{equation}

Figure 5 below is of the same type as Figure 4 from previous subsection.
\medskip

\bigskip

\centerline
{\begin{minipage}{6.9in} 
\centerline
{\includegraphics[height=4.9in]{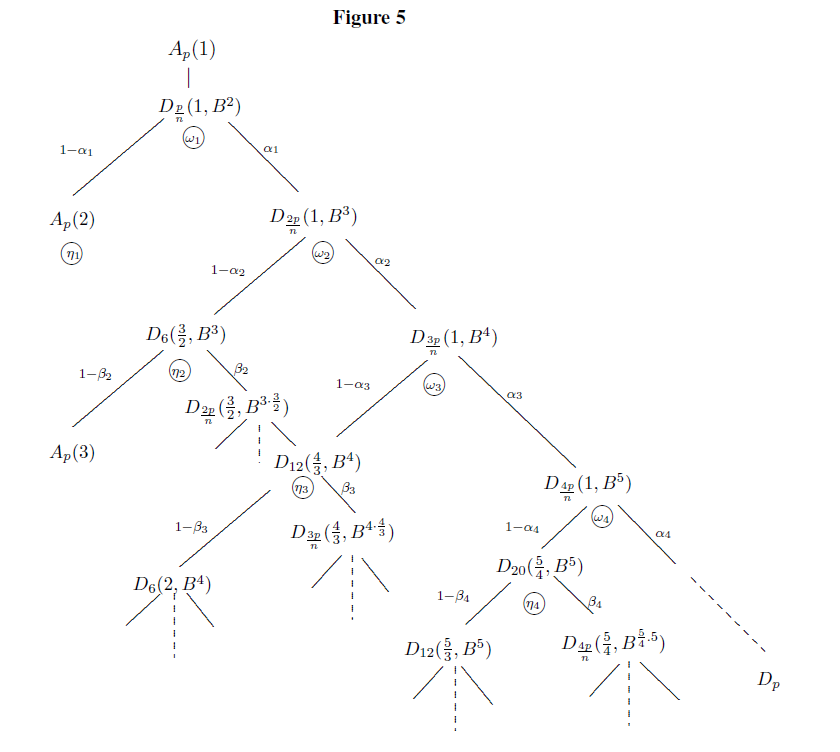} }
\end{minipage}}

\medskip

\medskip

\begin{theorem}
\label{nt27aaa}Let $p$ be sufficiently close to $n(n+1)$. Assume \eqref{ne65}. Let $r,M\ge 1$. Then for $u>0$ sufficiently close to $0$, for each $0<\delta\le 1$ and for each ball $B^n$ in $\R^n$ of radius $\delta^{-n}$  we have
$$
A_p(u,B^n,u)\lesssim_{\epsilon,K,r,M}\delta^{-\epsilon}V_{p,n}(\delta)^{1-(\sum_{0}^r\gamma_j)^M}D_p(1,B^n)^{1-(\sum_{0}^r\gamma_j)^M}\times
$$
$$\prod_{j_1=0}^r\ldots\prod_{j_M=0}^rA_p(u\prod_{l=1}^Mb_{j_l},B^n,u\prod_{l=1}^Mb_{j_l})^{\prod_{l=1}^M\gamma_{j_l}}.$$
Moreover, we have the crucial identity
$$
\omega_1:=\sum_{j=0}^\infty b_j\gamma_j,
$$
where $\omega_1=\omega_1(p)$ is the solution of the system \eqref{ne47}.
\end{theorem}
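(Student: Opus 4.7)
The plan is to extend verbatim the inductive architecture used for $n=3$ and $n=4$, replacing the two inflation factors $\tfrac32, \tfrac43$ by the full family $\tfrac{j+1}{j}$, $2\le j\le n-1$, together with the one-off initial inflation by factor $2$. The hypothesis \eqref{ne65} is what licenses the use of lower dimensional decoupling (Lemma \ref{nl5}) at each intermediate Lebesgue exponent $j(j+1)$, just as \eqref{ne48} and \eqref{ne49} did in the $n=4$ case.

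First I would establish the one-shot analog of Propositions \ref{np4} and \ref{np10}: use Theorem \ref{nt4} with $k=1$ to pass from balls $B^1$ to $B^2$, apply H\"older at index $\tfrac{p}{n}$ with weight $\alpha_1$ to split into an $L^2$ factor and an $L^{2p/n}$ factor, and apply Lemma \ref{nl3} to the former. This gives
$$A_p(1,B^2,1)\lesssim_{\epsilon,K}\delta^{-\epsilon}A_p(2,B^2,2)^{1-\alpha_1}D_{2p/n}(1,B^2)^{\alpha_1}.$$
Next, I iterate. An outstanding $D_{jp/n}$ term at ball size $\delta^{-s}$ is processed by inflating to $B^{\frac{j+1}{j}s}$ via Theorem \ref{nt4} with that particular $k=j$, H\"older-interpolating $\tfrac{jp}{n}$ between $j(j+1)$ and $\tfrac{(j+1)p}{n}$ with weight $\beta_j$, applying Lemma \ref{nl5} with \eqref{ne65} in dimension $j$ to the $L^{j(j+1)}$ term, and closing with one more H\"older step and $L^2$ decoupling to produce an $A_p$ leaf plus a $D_{(j+1)p/n}$ residue. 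The inflations occur in increasing order of radius $\delta^{-s\prod (\tfrac{j+1}{j})^{i_j}}$, so the scheme is well defined.

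After $r$ outer rounds this produces an analog of Proposition \ref{np16}, namely
$$A_p(u,B^n,u)\lesssim_{\epsilon,K,r}\delta^{-\epsilon}V_{p,n}(\delta)^{1-\sum_{0}^{r}\gamma_j}D_p(1,B^n)^{1-\sum_{0}^{r}\gamma_j}\prod_{j=0}^{r}A_p(b_ju,B^n,b_ju)^{\gamma_j},$$
with certain positive weights $b_j,\gamma_j$. The two exponents $1-\sum\gamma_j$ agree because once all residual $D_{jp/n}$, $j<n$, terms are eventually replaced using $D_{jp/n}\le D_p$, \eqref{j nbyrtug8rt7g856uy956y9}, and $D_p(v,B^n)\le V_{p,n}(\delta)D_p(1,B^n)$, a scaling audit on the definitions of $A_p,D_p$ forces equality (as in the proof of Proposition \ref{np16}). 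Iterating this $M$ times, with $u$ chosen sufficiently small that every intermediate argument $u\prod_{l} b_{j_l}$ still satisfies the hypothesis of the single-round statement, yields the product over multi-indices in the statement and promotes the exponent to $1-(\sum_{0}^{r}\gamma_j)^M$, exactly as Theorem \ref{nt7} does for $n=3$ and Theorem \ref{nt17} for $n=4$.

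The identity $\omega_1=\sum_{j=0}^\infty b_j\gamma_j$ is read off from the self-similar infinite tree $\mathbf{T}$ underlying the iteration. Writing $\omega_j$ for the total weight collected below an internal node of type $D_{jp/n}$ and $\eta_j$ for the weight below an $A_p$ leaf at the $j$-th inflation level, the same branch accounting that produced \eqref{ne44}--\eqref{ne46} for $n=3$ yields exactly the system \eqref{ne47} in the general case: each time a $D_{jp/n}$ node splits via H\"older of weight $\alpha_j$ into an $A_p\!/\!D_{j(j+1)}$ branch and a $D_{(j+1)p/n}$ branch, and each time a $D_{j(j+1)}$ node splits via H\"older of weight $\beta_j$ into an $\eta_{j-1}$ branch and an $\omega_j$ branch, with an overall factor $\tfrac{j+1}{j}$ from the inflation. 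The main obstacle is not any one step but the bookkeeping: one must check that the local contributions at every node of $\mathbf{T}$ aggregate to exactly the coefficients of \eqref{ne47}, uniformly over $n$. Since each step preserves the scaling structure locally and the only genuinely new analytic input beyond $n=3,4$ is that Lemma \ref{nl5} is invoked in every intermediate dimension, which is precisely the content of \eqref{ne65}, the construction then closes and the theorem follows by passing to the limit $r\to\infty$ in the identity for $\omega_1$.
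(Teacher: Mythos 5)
Your proposal follows essentially the same route as the paper, which itself only asserts that the general case is ``virtually identical'' to the $n=3$ and $n=4$ schemes; your description of the ball-inflation cascade, the scaling audit forcing the two exponents $1-\sum\gamma_j$ to agree, the $M$-fold iteration, and the tree accounting that yields the system \eqref{ne47} all match the intended argument. (One small slip: the H\"older interpolation of $\frac{jp}{n}$ between $j(j+1)$ and $\frac{(j+1)p}{n}$ carries weight $\alpha_j$, not $\beta_j$ --- you state this correctly later in the branch accounting --- and processing a $D_{jp/n}$ node in fact triggers a cascade of $\beta_j,\beta_{j-1},\dots,\beta_2$ splits with lower dimensional decoupling at each stage, which is what \eqref{ne47} encodes.)
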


The proof of Theorem \ref{nt27aaa} is virtually identical to the one of its lower dimensional counterparts presented in the earlier subsections. Combining this with Theorem \ref{tApe} from the Appendix finishes the proof of Theorem \ref{nt27} for arbitrary $n$, by taking $b_I=\prod_{l=1}^Mb_{j_l}$ , $\gamma_I=\prod_{l=1}^M\gamma_{j_l}$ and $M$ large enough.

\bigskip

\section{The proof of Theorem \ref{ntmain} for $n\ge 3$}
\label{nsproof}

In this section we prove Theorem \ref{ntmain} for $n\ge 3$. For $p\ge 2$, let $\eta_p\ge 0$ be the unique number such that
\begin{equation}\label{ne33}
\lim_{\delta\to 0}V_{p,n}(\delta)\delta^{\eta_p+\sigma}=0,\;\text{for each }\sigma>0
\end{equation}
and\begin{equation}\label{ne34}
\limsup_{\delta\to 0}V_{p,n}(\delta)\delta^{\eta_p-\sigma}=\infty,\;\text{for each }\sigma>0.
\end{equation}
It is immediate that $\eta_p$ is a finite number.
\bigskip

We start by presenting the following rather immediate consequence of Theorem \ref{nt27}. It will be helpful to start indexing the terms $A_p,D_p$ by the function $g$.
\begin{theorem}
\label{nt27agga}Fix $n\ge 3$ and let $p<n(n+1)$ be sufficiently close to $n(n+1)$. Assume Theorem \ref{ntmain} holds for all smaller values of $n$. Then for each $W>0$, for each sufficiently small $u>0$, the following inequality holds for each $g:[0,1]\to\C$,  $0<\delta\le 1$ and for each ball $B^n$ in $\R^n$ of radius $\delta^{-n}$
$$
A_p(u,B^n,u,g)\lesssim_{\epsilon,K, W}\delta^{-(1-uW)(\eta_p+\epsilon)}D_p(1,B^n,g).
$$
\end{theorem}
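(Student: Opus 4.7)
The plan is to feed the output of Theorem~\ref{nt27} into two auxiliary estimates that convert every factor on its right-hand side into a negative power of $\delta$ times a single copy of $D_p(1,B^n,g)$. Fix $W>0$ and let $\{b_I,\gamma_I\}$ be the weights provided by Theorem~\ref{nt27}. For all sufficiently small $u>0$ it gives
$$
A_p(u,B^n,u,g) \lesssim_{\epsilon,K,W} \delta^{-\epsilon}\, V_{p,n}(\delta)^{1-\sum_I\gamma_I}\, D_p(1,B^n,g)^{1-\sum_I\gamma_I} \prod_I A_p(ub_I,B^n,ub_I,g)^{\gamma_I}.
$$
The factor $V_{p,n}(\delta)$ is handled directly by \eqref{ne33}: for every $\epsilon>0$, $V_{p,n}(\delta)\lesssim_\epsilon \delta^{-\eta_p-\epsilon}$. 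The heart of the matter is to establish that for $0<v\le 1$,
$$
A_p(v,B^n,v,g) \;\lesssim_\epsilon\; \delta^{-(1-v)(\eta_p+\epsilon)}\, D_p(1,B^n,g),\qquad (\star)
$$
and then apply $(\star)$ with $v=ub_I$ (the smallness of $u$ depending on $W$ guarantees $ub_I\le 1$ for all $I$, since only finitely many $b_I$ occur).

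To prove $(\star)$, I would carry out three elementary reductions. First, H\"older's inequality on averaged norms gives $D_2(v,B^v,g)\le D_p(v,B^v,g)$ for $p\ge 2$, hence
$$
A_p(v,B^n,v,g)^p \le \frac{1}{|\B_v(B^n)|}\sum_{B^v}D_p(v,B^v,g)^p.
$$
Second, the $k=n$ averaging inequality \eqref{j nbyrtug8rt7g856uy956y9} applied to $B^n$ of radius $\delta^{-n}$ with its cover by balls $B^v$ of radius $\delta^{-v}$ yields $A_p(v,B^n,v,g)\lesssim D_p(v,B^n,g)$. Third, I would apply the parabolic rescaling inequality \eqref{ne31} to each interval $J_i$ of length $\delta^v$ appearing in $D_p(v,B^n,g)$, decoupling down to scale $\delta$; square-summing over $J_i\subset I_i$ and taking products over $i$ yields $D_p(v,B^n,g)\lesssim V_{p,n}(\delta^{1-v})\,D_p(1,B^n,g)$. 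A final application of \eqref{ne33} replaces $V_{p,n}(\delta^{1-v})$ by $\delta^{-(1-v)(\eta_p+\epsilon)}$, completing $(\star)$.

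Inserting $(\star)$ and the bound on $V_{p,n}(\delta)$ into Theorem~\ref{nt27}, the total exponent of $D_p(1,B^n,g)$ becomes $(1-\sum_I\gamma_I)+\sum_I\gamma_I=1$, while the total $\delta$-exponent equals
$$
-\epsilon-(\eta_p+\epsilon)\Bigl(1-\sum_I\gamma_I\Bigr)-(\eta_p+\epsilon)\sum_I\gamma_I(1-ub_I) = -\epsilon-(\eta_p+\epsilon)\Bigl(1-u\sum_I b_I\gamma_I\Bigr).
$$
Using $\sum_I b_I\gamma_I>W$ and $0<\delta\le 1$ gives $\delta^{-(\eta_p+\epsilon)(1-u\sum_I b_I\gamma_I)}\le \delta^{-(\eta_p+\epsilon)(1-uW)}$, and the extra $\delta^{-\epsilon}$ is absorbed by a harmless relabelling of $\epsilon$ (valid once $1-uW$ is bounded away from $0$). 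The entire argument is thus epsilon management and exponent bookkeeping; there is no serious obstacle once Theorem~\ref{nt27} is in hand. The constraints on $u$ are that it be small enough for Theorem~\ref{nt27} to apply, that $ub_I\le 1$ for every $I$ (so the rescaling \eqref{ne31} is used in its valid range), and that $1-uW>0$ so the loss in the final exponent is what is claimed; all three are met for $u$ sufficiently small depending only on $W$.
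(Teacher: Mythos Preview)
Your proof is correct and follows essentially the same route as the paper. The paper compresses your three-step derivation of $(\star)$ into the single line ``H\"older's inequality shows that $A_p(b_Iu,B^n,b_Iu,g)\lesssim D_p(b_Iu,B^n,g)$'' followed by the rescaling \eqref{ne31}, but the content is identical: H\"older on averaged norms plus the Minkowski-type inequality \eqref{j nbyrtug8rt7g856uy956y9}, then parabolic rescaling and \eqref{ne33}. Your exponent bookkeeping and the handling of the residual $\delta^{-\epsilon}$ are also exactly what is needed.
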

\begin{proof}Let $b_I,\gamma_I$ be the weights corresponding to $W$ from Theorem \ref{nt27}. Recall that we have for all small enough $u>0$
\begin{equation}
\label{ne67bygtytt78ty87t}
A_p(u,B^n,u,g)\lesssim_{\epsilon,K,W}\delta^{-\epsilon}V_{p,n}(\delta)^{1-\sum_I\gamma_I}D_p(1,B^n,g)^{1-\sum_I\gamma_I}\prod_{I}A_p(ub_I,B^n,ub_I,g)^{\gamma_I}.
\end{equation}

Assume in addition that $u$ is so small that $uW<1$ and $ub_I<1$ for each $I$.
First, H\"older's inequality shows that
\begin{equation}\label{nnne36}A_p(b_Iu,B^n,b_Iu,g)\lesssim D_p(b_Iu,B^n,g).
\end{equation}
Next, rescaling as in \eqref{ne31} leads to
\begin{equation}\label{ne36}
D_p(b_Iu,B^n,g)\lesssim V_{p,n}(\delta^{1-ub_I} )D_p(1,B^n,g).
\end{equation}
It suffices now to combine \eqref{ne33}, \eqref{ne67bygtytt78ty87t}, \eqref{nnne36} and \eqref{ne36}.

\end{proof}

Recall that we need to prove that
\begin{equation}
\label{ne70}
V_{n(n+1),n}(\delta)\lesssim_{\epsilon,p} \delta^{-\epsilon}.
\end{equation}
It turns out that it suffices to prove a similar statement with $n(n+1)$ replaced by $p<n(n+1)$.

\begin{lemma}Inequality \eqref{ne70} follows if
\begin{equation}
\label{ne7000000000000000}
V_{p,n}(\delta)\lesssim_{\epsilon,p} \delta^{-\epsilon}
\end{equation}
holds for each $p<p_n=n(n+1)$ sufficiently close to $n(n+1)$.
\end{lemma}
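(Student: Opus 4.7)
The plan is to promote the hypothesis to the endpoint exponent $p_0:=n(n+1)$ via H\"older interpolation on the $L^{p_0}(w_B)$ norm: trade the strong decoupling at some $p<p_0$ against a trivial decoupling at some $q>p_0$, with the gap at $q$ compensated by Bernstein's inequality. The key numerical coincidence is that for $|J|=\delta$, the Fourier transform of $E_J g$ is supported in a box of dimensions $\delta,\delta^2,\dots,\delta^n$, hence of volume $\delta^{1+2+\dots+n}=\delta^{n(n+1)/2}=\delta^{p_0/2}$. At $q=\infty$ this gives a Bernstein factor $\delta^{1/2}$ which exactly cancels the $\delta^{-1/2}$ loss from applying Cauchy--Schwarz to the $\delta^{-1}$ intervals; all remaining $\delta$-losses can then be pushed onto the $L^p$ side, where the hypothesis controls them.

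Concretely, fix $\epsilon>0$ and pick $p<p_0<q$, to be optimized, with $\theta\in(0,1)$ defined by $\tfrac{1}{p_0}=\tfrac{\theta}{p}+\tfrac{1-\theta}{q}$. Log-convexity of $L^r(w_B)$-norms yields
\[
\|E_{[0,1]}g\|_{L^{p_0}(w_B)}\le \|E_{[0,1]}g\|_{L^p(w_B)}^\theta\,\|E_{[0,1]}g\|_{L^q(w_B)}^{1-\theta}.
\]
To the first factor I apply the hypothesis, paying $C_{p,\epsilon'}\delta^{-\epsilon'}$ for a small $\epsilon'$ to be chosen; to the second factor I use triangle inequality plus Cauchy--Schwarz over the $\delta^{-1}$ intervals, paying $\delta^{-1/2}$. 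It remains to convert each $\|E_Jg\|_{L^p(w_B)}$ and $\|E_Jg\|_{L^q(w_B)}$ into $\|E_Jg\|_{L^{p_0}(w_B)}$. For the lower exponent, weighted H\"older against $w_B$ (using $\int w_B\sim|B|=\delta^{-n^2}$) gives $\|E_Jg\|_{L^p(w_B)}\lesssim \delta^{-n^2(1/p-1/p_0)}\|E_Jg\|_{L^{p_0}(w_B)}$; for the higher exponent, Bernstein's inequality for functions with Fourier support of volume $\delta^{n(n+1)/2}$ gives $\|E_Jg\|_{L^q(w_B)}\lesssim \delta^{\frac{n(n+1)}{2}(1/p_0-1/q)}\|E_Jg\|_{L^{p_0}(w_B)}$.

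Collecting all factors and simplifying using the identity $\frac{n(n+1)}{2p_0}=\frac{1}{2}$, the overall loss is
\[
V_{p_0,n}(\delta)\lesssim_{p,q,\epsilon'}\delta^{-\theta\epsilon'-\theta n^2(1/p-1/p_0)-(1-\theta)\frac{n(n+1)}{2q}}.
\]
Given $\epsilon>0$ I would set $\epsilon'=\epsilon/3$, then choose $q$ so large that $\frac{n(n+1)}{2q}<\epsilon/3$, and finally pick $p$ close enough to $p_0$ (so that $\theta\to 1$) that $n^2(1/p-1/p_0)<\epsilon/3$. The three contributions then sum to at most $\epsilon$, yielding $V_{p_0,n}(\delta)\lesssim_\epsilon\delta^{-\epsilon}$, which is precisely \eqref{ne70}.

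The only delicate step is invoking Bernstein's inequality in the weighted $L^q(w_B)$ norm rather than in Lebesgue measure on a compact ball. Since $w_B$ is a Schwartz-type weight and is essentially flat at the dual scale of the Fourier support of $E_Jg$ (frequency-localized at scale $\delta^n$, whereas $w_B$ lives essentially on a ball of radius $\delta^{-n}$), the standard convolve-with-a-bump proof of Bernstein transfers with only harmless absolute constants. This transfer is routine in the decoupling literature (and was used implicitly already in Section 5), so the argument is a bookkeeping exercise rather than a genuine obstacle.
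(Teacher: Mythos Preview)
Your argument is correct, but it takes a longer route than the paper's. The paper avoids interpolating against any $q>p_0$ altogether: since $E_{[0,1]}g$ has Fourier support in a ball of radius $O(1)$, the reverse H\"older inequality \eqref{ nghbugtrt90g0-er9t-9} gives
\[
\|E_{[0,1]}g\|_{L^{p_0}(w_B)}\lesssim \|E_{[0,1]}g\|_{L^{p}(w_B)}
\]
with \emph{no} $\delta$-loss. One then applies the hypothesis at $L^p$ and uses ordinary (forward) H\"older on each $\|E_Jg\|_{L^p(w_B)}$ to return to $L^{p_0}$, paying only $\|{\bf 1}\|_{L^{r}(w_B)}\sim\delta^{-n^2(1/p-1/p_0)}$, which is $<\delta^{-\epsilon}$ once $p$ is close enough to $p_0$. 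So the paper's proof is a two-line affair: Bernstein on the whole function, then H\"older on the pieces.

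Your approach instead interpolates $L^{p_0}$ between $L^p$ and $L^q$, uses trivial decoupling at $L^q$ (triangle plus Cauchy--Schwarz), and then applies Bernstein to each $E_Jg$ using the volume $\delta^{n(n+1)/2}$ of its Fourier-support box to recover the $\delta^{-1/2}$ Cauchy--Schwarz loss. This is perfectly valid --- the weighted Bernstein step goes through exactly as you indicate, via the same $\eta_B^{1/p_0}$ device used in the proof of \eqref{ nghbugtrt90g0-er9t-9} together with Lemma~\ref{nl9} --- but it introduces an extra parameter $q$ and an extra cancellation (the ``numerical coincidence'' $\frac{n(n+1)}{2p_0}=\frac12$) that the paper's argument never needs. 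In effect, you are applying Bernstein at the fine scale (on each $E_Jg$) where it produces $\delta$-powers that must then cancel, whereas the paper applies it at the coarse scale (on $E_{[0,1]}g$) where it produces nothing.
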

\begin{proof}
Let $B\subset \R^n$ be a ball with radius $\delta^{-n}$.
Using  inequality \eqref{ nghbugtrt90g0-er9t-9}, for  $p<p_n$ we have
$$\|E_{[0,1]}g\|_{L^{p_n}(w_{B})}\lesssim \|E_{[0,1]}g\|_{L^{p}(w_{B})}.$$
Combining this with H\"older's inequality we get
$$\|E_{[0,1]}g\|_{L^{p_n}(w_{B})}\lesssim V_{p,n}(\delta)(\sum_{J\subset [0,1]\atop{|J|=\delta}}\|E_Jg\|_{L^{p}(w_B)}^2)^{1/2}$$
$$\lesssim V_{p,n}(\delta)\|\textbf{1}\|_{L^{\frac{q}{q-1}}(w_B)}(\sum_{J\subset [0,1]\atop{|J|=\delta}}\|E_Jg\|_{L^{p_n}(w_B)}^2)^{1/2}.$$
It suffices to note that $q\to 1$ as $p\to p_n$.

\end{proof}

We now return to proving \eqref{ne70}. The proof is by induction. Recall that the case $n=2$ was proved in \cite{BD3}. Assume that we have proved \eqref{ne70} for all values less than $n$. Using the previous lemma, it will suffice to prove \eqref{ne7000000000000000}.

For the rest of the argument fix $p<p_n$ so that Theorem \ref{nt27agga} holds.
Recall that we need to prove that $\eta_p=0$.
\bigskip

Fix $K\ge 2M_n$, $0<\delta\le K^{-1}$, and $I_1,\ldots,I_{M_n}$, as in the previous section. Let  $W, u$ be as in Theorem \ref{nt27agga}.  Let $B^n$ be an arbitrary ball with radius $\delta^{-n}$ in $\R^n$.

Start by applying Cauchy--Schwarz
$$
\|(\prod_{i=1}^{M_n}E_{I_i}g)^{1/M_n}\|_{L^{p}_{\sharp}(w_{B^n})}\le \delta^{-\frac{u}2}\|(\prod_{i=1}^{M_n}\sum_{J_{i,u}\subset I_i\atop{|J_{i,u}|=\delta^{u}}}|E_{J_{i,u}}g|^2)^{\frac1{2M_n}}\|_{L^{p}_{\sharp}(w_{B^n})}$$
\begin{equation}
\label{ne35}
\lesssim \delta^{-\frac{u}2}\big(\frac1{|\B_u(B^n)|}\sum_{B^u\in\B_u(B^n)}\|(\prod_{i=1}^{M_n}\sum_{J_{i,u}\subset I_i\atop{|J_{i,u}|=\delta^{u}}}|E_{J_{i,u}}g|^2)^{\frac1{2M_n}}\|_{L^{p}_{\sharp}(w_{B^u})}^p\big )^{\frac1p}.
\end{equation}
Here $\B_u(B^n)$ is a finitely overlapping cover of $B^n$ with balls $B^u$ of radius $\delta^{-u}$.
Using Minkowski's inequality, \eqref{ne35} is dominated by $$\delta^{-\frac{u}2}\big(\frac1{|\B_u(B^n)|}\sum_{B^u\in\B_u(B^n)}D_p(u,B^u,g)^p\big)^{1/p}.$$ On the other hand, inequality \eqref{ nghbugtrt90g0-er9t-9} shows that $D_p(u,B^u,g)\lesssim D_2(u,B^u,g)$, and consequently
$$\big(\frac1{|\B_u(B^n)|}\sum_{B^u\in\B_u(B^n)}D_p(u,B^u,g)^p\big)^{1/p}\lesssim A_p(u,B^n,u,g).$$
We conclude that
\begin{equation}
\label{irurggype pge pgphgt  uyuuy}
\|(\prod_{i=1}^{M_n}E_{I_i}g)^{1/M_n}\|_{L^{p}_{\sharp}(w_{B^n})}\le \delta^{-\frac{u}2}A_p(u,B^n,u,g).\end{equation}
Next, use Theorem \ref{nt27agga}, \eqref{ne33} and \eqref{irurggype pge pgphgt  uyuuy} to write for each $\sigma>0$
$$\|(\prod_{i=1}^{M_n}E_{I_i}g)^{1/M_n}\|_{L^{p}_{\sharp}(w_{B^n})}\lesssim_{\sigma,\epsilon,K,W}\delta^{-(\epsilon+\frac{u}{2}+(\eta_p+\sigma)(1-uW))}D_p(1,B^n,g).$$
Note that this inequality holds uniformly over  all $g$ and $B^n$, so we can take the supremum over these elements to get
$$V_p(\delta,K)\lesssim_{\sigma,\epsilon,K,W}\delta^{-(\epsilon+\frac{u}{2}+(\eta_p+\sigma)(1-uW))}.$$
 Using \eqref{ne29} and \eqref{ne34} we can now write
$$
\delta^{-\eta_p+\epsilon_p(K)+\sigma}\lesssim_{\sigma,\epsilon,K,W}\delta^{-(\epsilon+\frac{u}{2}+(\eta_p+\sigma)(1-uW))}.
$$
Since this holds true for $\delta$ arbitrarily close to $0$, it further leads to
\begin{equation*}\label{ne37}
\eta_p-\epsilon_p(K)-\sigma\le \epsilon+\frac{u}{2}+(\eta_p+\sigma)(1-uW),\end{equation*}
which can be rewritten as follows
$$\eta_p\le \frac{1}{2W}+\frac{\epsilon+\epsilon_p(K)+\sigma(2-uW)}{uW}.$$
This holds true for each $\epsilon,\sigma>0$ and each $K\ge 2M_n$. Recalling that
$$\lim_{K\to\infty}\epsilon_p(K)=0,$$ we can further write
$$\eta_p\le \frac{1}{2W}.$$
Letting now $W\to\infty$ leads to $\eta_p=0$, as desired.
\bigskip

\section{Appendix}
Fix $n\ge 3$. For $\Delta$ in a small neighborhood of $n+1$, let
$$\alpha_j=\frac{\Delta-(j+1)}{\Delta-j},\;\;1\le j\le n-1,$$
$$\beta_j=\frac{2\Delta}{(j+1)(\Delta-j+1)},\;\;2\le j\le n-1.$$
We will be concerned with proving the following result, which was instrumental in achieving the conclusion of Theorem \ref{nt27}.

\begin{theorem}
\label{tApe}
For $\Delta<n+1$ sufficiently close to $n+1$, the solution $\omega_1(\Delta)$ of
the system
\begin{equation}
\label{ne147}
\begin{cases}\omega_j=(1-\alpha_j)\eta_j+\alpha_j\omega_{j+1}&:\quad 1\le j\le n-1\\ \hfill \eta_j=(1-\beta_j)\frac{j+1}{j}\eta_{j-1}+\beta_j\frac{j+1}{j}\omega_{j}&:\quad 2\le j\le n-1 \\  \omega_n=0\\\eta_1=2\end{cases}
\end{equation}
satisfies
$$\omega_1(\Delta)>1.$$
\end{theorem}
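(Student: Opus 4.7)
The plan is to show $\omega_1(n+1) = 1$ exactly and then $\omega_1'(n+1) = -\tfrac12 < 0$, so that by Taylor expansion $\omega_1(\Delta) = 1 - \tfrac12(\Delta - n - 1) + O((\Delta-n-1)^2) > 1$ for $\Delta < n+1$ sufficiently close to $n+1$.

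\textbf{Step 1 (the critical solution).} At $\Delta = n+1$ one has $\alpha_j = \tfrac{n-j}{n-j+1}$, $\beta_j = \tfrac{2(n+1)}{(j+1)(n-j+2)}$, and a short computation using the identity $(j+1)(n-j+2) - 2(n+1) = (j-1)(n-j)$ gives $1 - \beta_j = \tfrac{(j-1)(n-j)}{(j+1)(n-j+2)}$. I would then guess the explicit solution
$$\omega_j^* = \frac{n-j}{n-1}, \qquad \eta_j^* = \frac{2(n-j)}{n-1},$$
and verify both families of equations by direct substitution. The first family collapses after pulling out the factor $n-j$; the second reduces to the identity $(j-1)(n-j+1) + (n+1) = j(n-j+2)$. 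This gives $\omega_1(n+1) = 1$ and, since the linear system is non-degenerate at $\Delta = n+1$, $\omega_1$ is a smooth function of $\Delta$ nearby.

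\textbf{Step 2 (the linearised system).} Differentiating in $\Delta$ and evaluating at $\Delta = n+1$, with $\alpha_j'(n+1) = \tfrac{1}{(n-j+1)^2}$ and $\beta_j'(n+1) = -\tfrac{2(j-1)}{(j+1)(n-j+2)^2}$, a direct simplification using the values $\omega_j^*, \eta_j^*$ yields
\begin{align*}
(n-j+1)\,\omega_j' - (n-j)\,\omega_{j+1}' - \eta_j' &= -\tfrac{1}{n-1}, \quad 1 \le j \le n-1,\\
j(n-j+2)\,\eta_j' - (j-1)(n-j)\,\eta_{j-1}' - 2(n+1)\,\omega_j' &= \tfrac{2(j-1)}{n-1}, \quad 2 \le j \le n-1,
\end{align*}
together with $\omega_n' = 0$ and $\eta_1' = 0$. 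The key observation is the ansatz
$$\eta_j' = 2\omega_j' + \tfrac{1}{j}, \qquad 2 \le j \le n-1.$$
Substituting this into the first family collapses it to the recursion $(n-j-1)\omega_j' - (n-j)\omega_{j+1}' = \tfrac{1}{j} - \tfrac{1}{n-1}$ for $j \ge 2$; and, using the factorisation $2j(n-j+2) - 2(n+1) = 2(j-1)(n-j+1)$, substituting into the second family at index $j$ reduces it, after a little algebra, to exactly the same recursion shifted down by one. Thus the ansatz is self-consistent, and by uniqueness it is exact.

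\textbf{Step 3 (extraction of $\omega_1'$).} The second family at $j=2$ (using $\eta_1' = 0$ and $\eta_2' = 2\omega_2' + \tfrac12$) yields
$$\omega_2' = -\frac{(n-2)(n+1)}{2(n-1)^2},$$
and the first family at $j=1$ (using $\eta_1' = 0$) then gives
$$\omega_1'(n+1) = \frac{1}{n}\left[(n-1)\omega_2' - \frac{1}{n-1}\right] = \frac{-(n-2)(n+1) - 2}{2n(n-1)} = \frac{-n(n-1)}{2n(n-1)} = -\frac{1}{2}.$$
The main obstacle is really the identification of the decoupling ansatz $\eta_j' = 2\omega_j' + 1/j$; once it is in hand, the remaining verifications are a short sequence of algebraic identities, and the Taylor expansion from the opening paragraph finishes the proof.
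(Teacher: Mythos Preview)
Your proof is correct and takes a genuinely different route from the paper's. Both arguments begin with the same explicit critical solution $\omega_j^*=\frac{n-j}{n-1}$, $\eta_j^*=2\omega_j^*$ at $\Delta=n+1$, but then diverge.

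\medskip

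The paper does \emph{not} differentiate in $\Delta$. Instead it introduces an auxiliary boundary parameter, replacing the condition $\omega_n=0$ by $\omega_n=\theta$, and observes that $\omega_1(\Delta,\theta)=A(\Delta)+B(\Delta)\theta$ is affine in $\theta$. It then exhibits an explicit one--parameter family of solutions (in fact $\eta_j=2\omega_j=\tfrac{2(\Delta-j-1)}{\Delta-2}$) which shows $\omega_1\big(\Delta,\tfrac{\Delta-n-1}{\Delta-2}\big)=1$ identically in $\Delta$. The proof is then reduced to showing $B(n+1)>0$, which the paper obtains by a soft, qualitative contraction argument: one iterates the two recursion families to write $\omega_1$ as an affine combination of the other unknowns with non-negative coefficients, and tracks how those coefficients decay. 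No derivative is ever computed.

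\medskip

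Your approach is sharper and more explicit: the ansatz $\eta_j'=2\omega_j'+\tfrac1j$ (which, incidentally, also holds at $j=1$ once the answer is known) collapses the linearised system to a single first--order recursion in $\omega_j'$, and yields the closed form $\omega_1'(n+1)=-\tfrac12$, remarkably independent of $n$. This is strictly more information than the paper extracts. The trade--off is that the paper's argument requires no inspired guess: the positivity of $B(n+1)$ follows from nothing more than the positivity of all the weights $1-\alpha_j,\alpha_j,1-\beta_j,\beta_j$ at $\Delta=n+1$, together with a limiting argument.

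\medskip

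One small point worth tightening: you assert that the linear system is non-degenerate at $\Delta=n+1$, which is what makes $\omega_1(\Delta)$ smooth there and justifies the uniqueness step in your ansatz argument. This is true, but deserves a sentence. (One clean way: the paper's iteration shows that repeatedly substituting the two recursion families into the expression for $\omega_1$ drives the coefficients of the remaining unknowns to zero, which is exactly the statement that the homogeneous system has only the trivial solution.)
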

\begin{proof}
For arbitrary $\theta\in\R$, consider the related system
\begin{equation}
\label{ne247}
\begin{cases}\omega_j=(1-\alpha_j)\eta_j+\alpha_j\omega_{j+1}&:\quad 1\le j\le n-1\\ \hfill \eta_j=(1-\beta_j)\frac{j+1}{j}\eta_{j-1}+\beta_j\frac{j+1}{j}\omega_{j}&:\quad 2\le j\le n-1 \\  \omega_n=\theta\\\eta_1=2,\end{cases}
\end{equation}
and call $\omega_j(\Delta,\theta),\eta_j(\Delta,\theta)$ its solution.

Iterating the first equation in \eqref{ne247} a few times leads to
\begin{equation}
\label{ne80}
\omega_j=\frac1{\Delta-j}(\eta_j+\eta_{j+1}+\ldots+\eta_{n-1})+\frac{\Delta-n}{\Delta-j}\theta,\;1\le j\le n-1.
\end{equation}
Substituting \eqref{ne80} in the second equation from \eqref{ne247} leads to
\begin{equation}
\label{ne81}
\eta_j= (1-\frac{2\Delta}{(j+1)(\Delta-j+1)})\frac{j+1}{j}\eta_{j-1}+
\end{equation}
$$\frac{2\Delta}{j(\Delta-j)(\Delta-j+1)}(\eta_j+\eta_{j+1}+\ldots+\eta_{n-1})+\frac{2\Delta(\Delta-n)}{j(\Delta-j+1)}\theta,$$ for $2\le j\le n-1.$

Together with the initial condition $\eta_1=2$, \eqref{ne81} determines all $\eta_j$. Then \eqref{ne80} determines all $\omega_j$.

It is an easy verification that
\begin{equation}
\label{ne82}
\eta_j(n+1,0)=2\omega_j(n+1,0)=2\frac{n-j}{n-1}
\end{equation}
and in fact more generally, for each $\Delta$ close to $n+1$
\begin{equation}
\label{ne83}
\eta_j(\Delta,\frac{\Delta-n-1}{\Delta-2})=2\omega_j(\Delta,\frac{\Delta-n-1}{\Delta-2})=2\frac{\Delta-j-1}{\Delta-2}.
\end{equation}
In particular,
\begin{equation}
\label{ne84}
\omega_1(\Delta,\frac{\Delta-n-1}{\Delta-2})=1.
\end{equation}
It will suffice to prove that for each $\Delta$ sufficiently close to $n+1$, the function $\theta\mapsto\omega_1(\Delta,\theta)$ is increasing in a neighborhood of 0. Indeed, combining this with \eqref{ne84} for $\Delta<n+1$ sufficiently close to $n+1$  and with the fact that $\frac{\Delta-n-1}{\Delta-2}<0$ for such $\Delta$, we get
$$\omega_1(\Delta,0)>\omega_1(\Delta,\frac{\Delta-n-1}{\Delta-2})=1,$$
as desired.

Now,  \eqref{ne84} will follow if we prove that
\begin{equation}
\label{ne85}
\omega_1(\Delta,\theta)=A(\Delta)+B(\Delta)\theta
\end{equation}
with $B(\Delta)>0$, for $\Delta$ close to $n+1$. The linear representation is clear from  \eqref{ne80}, \eqref{ne81}. Invoking continuity of the rational function $B(\Delta)$, it will  suffice to argue that $B(n+1)>0$.

To simplify notation, we fix $\theta$ and denote by $\omega_j,\eta_j$ the solution $\omega_j(n+1,\theta),\eta_j(n+1,\theta)$ of the system \eqref{ne247} corresponding to $\Delta=n+1.$
The first two equations of the system \eqref{ne247}
can be written as follows
$$\omega_j=f_j(\eta_j,\omega_{j+1}),\;\;1\le j\le n-2$$
$$\omega_{n-1}=f_{n-1}(\eta_{n-1},\theta)$$
$$\eta_j=g_j(\eta_{j-1},\omega_j),\;\;2\le j\le n-2,$$
with  $f_j(\cdot,\cdot),g_j(\cdot,\cdot)$ linear. Moreover, the coefficients of all $f_j,g_j$ are strictly positive.
We can iterate these equations as many times as we wish, by each time paying attention to the new representation for $\omega_1$ as a function of $\omega_2,\ldots,\omega_{n-1},\theta,\eta_2,\ldots,\eta_{n-1}.$ For example, iterating twice amounts to writing
$$\omega_1=f_1(2,f_2(\eta_2,\omega_3)),$$
while iterating again leads to
$$\omega_1=f_1(2,f_2(g_2(2,\omega_2),f_3(\eta_3,\omega_{4}))).$$
Note that all these iterations lead to affine combinations. Write the result after $r$ iterations as follows
\begin{equation}
\label{ne86}
\omega_1=A_r+B_r\theta+a_{1,r}\omega_1+\ldots+a_{n-1,r}\omega_{n-1}+b_{2,r}\eta_2+\ldots b_{n-1,r}\eta_{n-1},
\end{equation}
with all coefficients $A_r,B_r,a_{i,r},b_{i,r}\ge 0$ and independent of $\theta$. It is fairly immediate to note that $B_r>0$ for $r\ge n-1$, and that $A_r,B_r$ are nondecreasing functions of $r$.

By using $\theta=0$ in \eqref{ne86}, combined with \eqref{ne82} and \eqref{ne84}, we find that
\begin{equation}
\label{ne88}
1=\omega_1(n+1,0)=A_r+\sum_{i=1}^{n-1}a_{i,r}\frac{n-i}{n-1}+2\sum_{i=2}^{n-1}b_{i,r}\frac{n-i}{n-1}\ge A_r.
\end{equation}
The key observation is that
\begin{equation}
\label{ne87}
\lim_{r\to\infty}(\sum_{i=1}^{n-1}a_{i,r}+\sum_{i=2}^{n-1}b_{i,r})=0.
\end{equation}
We will argue that if this were not true, it would force $A_r$ to approach $\infty$, contradicting \eqref{ne88}. Indeed, note that since $\eta_1>0$ and the coefficients of $f_j,g_j$ are $>0$, iterating the term $$a_{1,r}\omega_1+\ldots+a_{n-1,r}\omega_{n-1}+b_{2,r}\eta_2+\ldots b_{n-1,r}\eta_{n-1}$$
sufficiently many times will add to the value of $A_r$ a nonzero fraction of $$(\sum_{i=1}^{n-1}a_{i,r}+\sum_{i=2}^{n-1}b_{i,r}).$$
Finally, use \eqref{ne87} and let $r\to\infty$ in \eqref{ne86} to write
$$\omega_1=A+B\theta,$$
with $A:=\lim_{r\to\infty}A_r$, $B:=\lim_{r\to\infty}B_r$. The fact that $B>0$ follows from previous observations.

\end{proof}

\end{document}